\documentclass[11pt]{article}
\usepackage{amsmath,amssymb,amsthm,indentfirst,enumitem,verbatim}
\usepackage[left=2.6cm,right=2.6cm,top=2.9cm,bottom=2.9cm]{geometry}
\newtheorem{theorem}{Theorem}[section]
\newtheorem{lemma}{Lemma}[section]
\newtheorem{definition}{Definition}[section]

\newtheorem{remark}{Remark}[section]
\numberwithin{equation}{section}
\allowdisplaybreaks
\begin{document}
	\title{Normalized solutions for $p$-Laplacian equations with potential}
	
	\author{Shengbing Deng\footnote{Corresponding author.}\ \footnote{E-mail address:\, {\tt shbdeng@swu.edu.cn} (S. Deng), {\tt qrwumath@163.com} (Q. Wu).}\  \ and Qiaoran Wu\\
	\footnotesize  School of Mathematics and Statistics, Southwest University, Chongqing, 400715, P.R. China}

    \date{}
	\maketitle
	
    \begin{abstract}
		In this paper, we consider the existence of normalized solutions for the following $p$-Laplacian equation
		    \begin{equation*}
		    	\left\{\begin{array}{ll}
		    		-\Delta_{p}u-V(x)\lvert u\rvert^{p-2}u+\lambda\lvert u\rvert^{p-2}u=\lvert u\rvert^{q-2}u&\mbox{in}\ \mathbb{R}^N,\\
		    		\int_{\mathbb{R}^N}\lvert u\rvert^pdx=a^p,
		    	\end{array}\right.
		    \end{equation*}
where $N\geqslant 1$, $p>1$, $p+\frac{p^2}{N}<q<p^*=\frac{Np}{N-p}$(if $N\leqslant p$, then $p^*=+\infty$), $a>0$ and $\lambda\in\mathbb{R}$ is a Lagrange multiplier which appears due to the mass constraint. Firstly, under some smallness assumptions on $V$, but no any assumptions on $a$, we obtain a mountain pass solution with positive energy, while no solution with negative energy. Secondly, assuming that the mass $a$ has an upper bound depending on $V$, we obtain two solutions, one is a local minimizer with negative energy, the other is a mountain pass solution with positive energy.
    \end{abstract}

\section{Introduction}

    In this paper, we study the existence of solutions for the following $p$-Laplacian equation
        \begin{equation}\label{equation}
        	-\Delta_{p}u-V(x)\lvert u\rvert^{p-2}u+\lambda\lvert u\rvert^{p-2}u=\lvert u\rvert^{q-2}u\quad\mbox{in}\ \mathbb{R}^N,
        \end{equation}
    where $\Delta_{p}u=\mbox{div}(\lvert\nabla u\rvert^{p-2}\nabla u)$ is the $p$-Laplacian operator, $N\geqslant 1$, $p>1$, $p+\frac{p^2}{N}<q<p^*=\frac{Np}{N-p}$(if $N\leqslant p$, then $p^*=+\infty$) and $\lambda\in\mathbb{R}$.

    If $p=2$, (\ref{equation}) is a special case of the following equation
        \begin{equation}\label{equationf}
        	-\Delta u-V(x)u+\lambda u=f(u)\quad\mbox{in}\ \mathbb{R}^N,
        \end{equation}
    which can be derived from the following Schr\"{o}dinger equation
        \[i\psi_{t}+\Delta\psi+V(x)\psi+g(\lvert\psi\rvert^2)\psi=0,\quad(t,x)\in\mathbb{R}^+\times\mathbb{R}^N,\]
    when we look for standing waves of the form $\psi(t,x)=e^{-i\lambda t}u(x)$, where $\lambda\in\mathbb{R}$, $u$ is a real function and $f(u)=g(u^2)u$. A lot of efforts have been done to study (\ref{equationf}). To obtain the existence result, a possible choice is to fix $\lambda\in\mathbb{R}$ and find critical points of functional
        \[I(u)=\frac{1}{2}\int_{\mathbb{R}^N}(\lvert\nabla u\rvert^2-V(x)u^2+\lambda u^2)dx-\int_{\mathbb{R}^N}F(u)dx,\]
    where $F(u)=\int_{0}^{u}f(s)ds$. Here we refer the readers to \cite{cg,cgpdss,immy} and references therein. Alternatively, an interesting method is to consider a prescribe $L^2$ norm of $u$, that is, let
        \begin{equation}\label{uL2}
        	\int_{\mathbb{R}^N}u^2dx=a^2,
        \end{equation}
    where $a>0$ is a fixed constant, and find critical points of functional
        \[J(u)=\frac{1}{2}\int_{\mathbb{R}^N}(\lvert\nabla u\rvert^2-V(x)u^2)dx-\int_{\mathbb{R}^N}F(u)dx,\]
    which satisfy (\ref{uL2}). In this case, $\lambda\in\mathbb{R}$ will appear as a Lagrange multiplier and solutions of (\ref{equationf}) satisfy (\ref{uL2}) always called normalized solutions.

    When we search for normalized solutions of (\ref{equationf}), a new critical exponent $2+\frac{4}{N}$ which is called $L^2$ critical exponent appears. This critical exponent can be derived from the Gagliardo-Nirenberg inequality (see \cite{am,wmi}): for every $p<q<p^*$, there exists an optimal constant $C_{N,p,q}>0$ depending on $N$, $p$ and $q$ such that
        \[\lVert u\rVert_{q}\leqslant C_{N,p,q}\lVert\nabla u\rVert_{p}^{\frac{N(q-p)}{pq}}\lVert u\rVert_{p}^{1-\frac{N(q-p)}{pq}}\quad\forall u\in W^{1,p}(\mathbb{R}^N).\]
    Assume that $V\equiv 0$ and $f(u)=\lvert u\rvert^{q-2}u$. In the $L^2$-subcritical case ( $2<q<2+\frac{4}{N}$), by Gagliardo-Nirenberg inequality, $J(u)$ is bounded from below if $u$ satisfies (\ref{uL2}) and we can find a global minimizer of $J(u)$, see \cite{acojc,lpl1,lpl2} and references therein.

    However, in the $L^2$-supercritical case ($q>2+\frac{4}{N}$), the functional $J(u)$ is unbounded from below if $u$ satisfies (\ref{uL2}) and it seems impossible to search for a global minimizer of $J(u)$. Jeanjean \cite{jl}   first studied the $L^2$-supercritical case, with general nonlinearities and $V\equiv 0$, he introduced an auxiliary functional
        \[\tilde{J}(s,u):=J(s\star u)=\frac{1}{2}e^{2s}\int_{\mathbb{R}^N}\lvert\nabla u\rvert^2dx-e^{-Ns}\int_{\mathbb{R}^N}F(e^{\frac{Ns}{2}}u)dx\]
    to obtain the boundedness of a Palais-Smale(PS) sequence and got the existence result. This method has been widely used to study normalized solutions of (\ref{equationf}). For the non-potential case, that is $V\equiv 0$, we refer the readers to \cite{jlltt,lx,sn1,sn2,wjwy} and references therein.

    If $V\not\equiv 0$, it is complicated to study normalized solutions of (\ref{equationf}). Firstly, the appearance of $V$ will strongly affect the geometry structure of $J$. In \cite{mrrgvg}, Molle et al. considered the normalized solutions of (\ref{equation}) with $V\geqslant 0$ and established different existence results under different assumptions on $V$ and $a$.
    Secondly, it is hard to obtain the compactness of a minimizing sequence or PS sequence. If $V$ is a radial function, we can work in radial space $H_{r}^1(\mathbb{R}^N)$ to overcome this difficulty, but it is invalid when $V$ is not a radial function. In \cite{btmrrm,mrrgvg}, the authors used a splitting lemma to conclude the weak convergence and obtained the compactness of PS sequence. In \cite{dyzx,kjctcl}, the authors considered a minimizing problem on Pohozaev manifold and proved the infimum on Pohozaev manifold is strictly decreasing for $a$ to obtain the compactness of a minimizing sequence. We refer the readers to \cite{btqszw,pbpavg,tzzczl,yjyj} and references therein for more results about normalized solutions of Schr\"{o}dinger equations with $V\not\equiv 0$.

    If $p\not\equiv 2$,  there are few papers on the normalized solutions of $p$-Laplacian equations. For the case of $V\equiv 0$, Wang et al. in \cite{wwlqzj} considered the equation
        \[-\Delta_{p}u+\lvert u\rvert^{p-2}u=\mu u+\lvert u\rvert^{s-1}u\quad\mbox{in}\ \mathbb{R}^N\]
    with $L^2$ constraint
        \[\int_{\mathbb{R}^N}u^2dx=a^2,\]
    where $1<p<N$, $s\in(\frac{N+2}{N}p,p^*)$. By Gaglairdo-Nirenberg inequality, the $L^2$-critical exponent should be $\frac{N+2}{N}p$. Zhang and Zhang \cite{zzzz} considered the $L^p$ constraint, that is
        \begin{equation*}
        	\left\{\begin{array}{ll}
        		-\Delta_{p}u=\lambda\lvert u\rvert^{p-2}u+\mu\lvert u\rvert^{q-2}u+g(u)&\mbox{in}\ \mathbb{R}^N,\\
        		\int_{\mathbb{R}^N}\lvert u\rvert^pdx=a^p,
        	\end{array}\right.
        \end{equation*}
    where $N> p$, $1<p<q\leqslant p+\frac{p^2}{N}$ and $g:\mathbb{R}\rightarrow\mathbb{R}$ are $L^2$-supercritical but Sobolev subcritical nonlinearities. By Gagliardo-Nirenberg inequality, the $L^p$-critical exponent is $p+\frac{p^2}{N}$. For the case of $V\not\equiv 0$, Wang and Sun \cite{wcsj} considered both the $L^2$ constraint and the $L^p$ constraint for the following problem
        \begin{equation*}
    	    \left\{\begin{array}{ll}
    		    -\Delta_{p}u+V(x)\lvert u\rvert^{p-2}u=\lambda\lvert u\rvert^{r-2}u+\lvert u\rvert^{q-2}u&\mbox{in}\ \mathbb{R}^N,\\
    		    \int_{\mathbb{R}^N}\lvert u\rvert^rdx=c,
    	    \end{array}\right.
        \end{equation*}
    where $1<p<N$, $\lambda\in\mathbb{R}$, $r=p$ or $2$, $p<q<p^*$ and $V(x)=\lvert x\rvert^k$ with $k>0$. Since $V$ is a radial function, they can work in the space $W_{r}^{1,p}(\mathbb{R}^N)$.

    Motivated by the results mentioned above, we are interested in the existence of normalized solutions for (\ref{equation}) with a $L^p$ constraint. Define $W(x):=V(x)\lvert x\rvert$ and
        \begin{equation}\label{constrain}
        	S_{a}:=\Big\{u\in W^{1,p}(\mathbb{R}^N): \int_{\mathbb{R}^N}\lvert u\rvert^pdx=a^p\Big\},
        \end{equation}
    where $a>0$ is a constant. Throughout the rest of the paper we assume that
        \begin{equation}\label{Vgeq0}
        	V\geqslant 0\quad\mbox{but}\quad V\not\equiv 0.
        \end{equation}
    Our results can be stated as follows.

    \begin{theorem}\label{th1}
        Let $N\geqslant 2$, $1<p<N$ and {\rm(\ref{Vgeq0})} holds. Then there exists a positive constant $L$ depending on $N$ and $p$, such that if
        	\begin{equation}\label{VNgeqp}
        	    \max\{\lVert V\rVert_{N/p},\lVert W\rVert_{N/(p-1)}\}<L,
        	\end{equation}
        then {\rm (\ref{equation})} has a mountain pass solution on $S_{a}$ for every $a>0$ with positive energy, while no solution with negative energy.
    \end{theorem}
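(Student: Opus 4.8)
\quad The plan is to obtain the solution as a mountain pass critical point of
$J(u)=\tfrac1p\|\nabla u\|_p^p-\tfrac1p\int_{\mathbb R^N}V|u|^p\,dx-\tfrac1q\|u\|_q^q$ constrained to $S_a$, exploiting the $L^p$-preserving dilations $s\star u:=e^{Ns/p}u(e^s\cdot)$. First I would record what the smallness hypothesis buys: by Sobolev $\|u\|_{p^*}\le S\|\nabla u\|_p$ and H\"older, $\int V|u|^p\le S^p\|V\|_{N/p}\|\nabla u\|_p^p$, so once $L$ is below a dimensional multiple of $S^{-p}$ the form $\|\nabla u\|_p^p-\int V|u|^p$ is equivalent to $\|\nabla u\|_p^p$; combining this with Gagliardo--Nirenberg and the $L^p$-supercriticality $q>p+\tfrac{p^2}N$ (equivalently $\tfrac{N(q-p)}{p}>p$), one gets $m_\rho:=\inf\{J(u):u\in S_a,\ \|\nabla u\|_p=\rho\}>0$ for a suitable $\rho>0$, while $J(s\star u_0)\to-\infty$ as $s\to+\infty$ and $J(s\star u_0)\to0$ (staying positive for $s\ll0$) for any fixed $u_0\in S_a$. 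Taking endpoints $s_-\star u_0$ and $s_+\star u_0$ with $s_-\ll0\ll s_+$, every path in $S_a$ between them meets $\{\|\nabla u\|_p=\rho\}$, so the mountain pass level satisfies $c(a)\ge m_\rho>0$; also $c(a)\le\inf_{u\in S_a}\max_{s\in\mathbb R}J(s\star u)$ by lifting fibers to paths. Using $V\ge0$, $V\not\equiv0$, the positivity of the (near-)ground state of the limit functional $J_\infty(u)=\tfrac1p\|\nabla u\|_p^p-\tfrac1q\|u\|_q^q$, and the smallness of $V$, one produces a fiber (the ground state's fiber translated over the support of $V$) along which $J<J_\infty$ by a definite amount, giving the strict inequality $0<c(a)<c_\infty(a)$, where $c_\infty(a)$ is the analogous level for $J_\infty$; an explicit fiber computation shows $c_\infty(a)>0$ and is strictly decreasing in $a$.

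Next I would apply Jeanjean's monotonicity trick. The augmented functional $\widetilde J(s,u):=J(s\star u)$ on $\mathbb R\times S_a$ has the same mountain pass level $c(a)$, and a Palais--Smale sequence for $\widetilde J$ yields $(v_n)\subset S_a$ with $J(v_n)\to c(a)$, $J'|_{S_a}(v_n)\to0$, and the asymptotic identity $\mathcal P(v_n):=\|\nabla v_n\|_p^p+\tfrac1p\int(\nabla V\cdot x)|v_n|^p\,dx-\tfrac{N(q-p)}{pq}\|v_n\|_q^q\to0$; here one reads $\int(\nabla V\cdot x)|v|^p$ through the integration by parts $\int(\nabla V\cdot x)|v|^p=-N\int V|v|^p-p\int V|v|^{p-2}v\,(x\cdot\nabla v)$, which by H\"older with the triple $(N/(p-1),\,p^*/(p-1),\,p)$ and Sobolev is bounded by $C(\|V\|_{N/p}+\|W\|_{N/(p-1)})\|\nabla v\|_p^p$ --- this is precisely what forces the exponent $N/(p-1)$ in the hypothesis $W=V|x|\in L^{N/(p-1)}$. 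Eliminating $\|v_n\|_q^q$ between $J(v_n)\to c(a)$ and $\mathcal P(v_n)\to0$, and using $\tfrac{N(q-p)}{p^2}-1>0$ together with the smallness $L$, bounds $\|\nabla v_n\|_p$; since $\|v_n\|_p=a$, $(v_n)$ is bounded in $W^{1,p}(\mathbb R^N)$, and testing the equation with $v_n$ bounds the Lagrange multipliers $\lambda_n$, so $\lambda_n\to\lambda$ along a subsequence.

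The heart of the proof is compactness. Vanishing is excluded: if $\|v_n\|_q\to0$ then $\mathcal P(v_n)\to0$ forces $\|\nabla v_n\|_p\to0$, whence $J(v_n)\to0\ne c(a)$. Let $v_n\rightharpoonup v$ in $W^{1,p}$; since $V\in L^{N/p}$ makes $u\mapsto\int V|u|^p$ weakly continuous, and using the $(S_+)$-property of $-\Delta_p$ for the needed local compactness, $v$ solves $-\Delta_p v-V|v|^{p-2}v+\lambda|v|^{p-2}v=|v|^{q-2}v$. Now I would invoke a splitting lemma for the $p$-Laplacian, in the spirit of those used in \cite{btmrrm,mrrgvg}: up to a subsequence $v_n=v+\sum_{j=1}^k\bar w_j(\cdot-y_n^j)+o(1)$ in $W^{1,p}$ with $|y_n^j|\to\infty$, each $\bar w_j\ne0$ solving the limit equation $-\Delta_p\bar w+\lambda|\bar w|^{p-2}\bar w=|\bar w|^{q-2}\bar w$, and with $\|\nabla v_n\|_p^p$, $\|v_n\|_q^q$, $\|v_n\|_p^p$ splitting additively while $\int V|v_n|^p\to\int V|v|^p$; hence $c(a)=J(v)+\sum_{j}J_\infty(\bar w_j)$ and $a^p=\|v\|_p^p+\sum_j\|\bar w_j\|_p^p$. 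Any nonzero solution of the $V$-equation or of the limit equation satisfies the corresponding Pohozaev constraint $\mathcal P(\cdot)=0$; combining this with the energy and using the smallness $L$ exactly as in the boundedness step (with $V$ present for $v$, and $V\equiv0$ for each $\bar w_j$) yields $J(v)\ge0$ and $J_\infty(\bar w_j)\ge c_\infty(\|\bar w_j\|_p)\ge c_\infty(a)$. Therefore $k\ge1$ would force $c(a)\ge c_\infty(a)$, contradicting $c(a)<c_\infty(a)$; hence $k=0$, $v_n\to v$ strongly in $W^{1,p}$, so $v\in S_a$, $J(v)=c(a)>0$, and $v$ is the desired mountain pass solution with positive energy. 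Finally, any solution $u$ on $S_a$ satisfies $\mathcal P(u)=0$, and the same elimination gives $\tfrac{N(q-p)}{p}J(u)=\left(\tfrac{N(q-p)}{p^2}-1\right)\|\nabla u\|_p^p-\tfrac{N(q-p)}{p^2}\int V|u|^p-\tfrac1p\int(\nabla V\cdot x)|u|^p\ge c_0\|\nabla u\|_p^p>0$ once $L$ is small, so there is no solution on $S_a$ with negative (indeed nonpositive) energy.

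The step I expect to be the main obstacle is the compactness argument --- specifically, establishing the $p$-Laplacian splitting/profile-decomposition lemma in $W^{1,p}(\mathbb R^N)$ without radial symmetry: one must produce a Brezis--Lieb-type decomposition for the nonlinear gradient term, locate the escaping bubbles, verify that each limiting profile solves the limit equation with the common multiplier $\lambda$, and make the energy/mass bookkeeping exact; after that, the perturbative inequality $c(a)<c_\infty(a)$ together with the monotonicity of $c_\infty$ closes the argument. A secondary technical point is the rigorous justification of the (asymptotic) Pohozaev identity $\mathcal P(\cdot)=0$ for weak solutions of the $p$-Laplacian, via a Pucci--Serrin-type variational identity combined with the integration-by-parts reformulation of $\int(\nabla V\cdot x)|u|^p$, which is what lets the hypotheses be stated only through $\|V\|_{N/p}$ and $\|W\|_{N/(p-1)}$.
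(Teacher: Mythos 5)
Your proposal follows essentially the same route as the paper: the fibered mountain pass level for $\tilde E(s,u)=E(s\star u)$, a Palais--Smale sequence carrying the asymptotic Pohozaev constraint, boundedness by eliminating $\lVert u_n\rVert_q^q$ between the energy and Pohozaev identities under the smallness of $\lVert V\rVert_{N/p}$ and $\lVert W\rVert_{N/(p-1)}$, a splitting lemma to rule out dichotomy via the strict inequality $m_{V,a}<m_a$ combined with $E(u)\geqslant 0$ and $E_{\infty}(w^j)\geqslant m_{\lVert w^j\rVert_p}\geqslant m_a$, and the Pohozaev identity again for the nonexistence of negative-energy solutions. All of these steps, including the role of $W=V\lvert x\rvert\in L^{N/(p-1)}$ in controlling $\int V\lvert u\rvert^{p-2}u\,(x\cdot\nabla u)$, match the paper's Lemmas 3.1--3.7 and the splitting Lemma 2.4.

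There is one genuine gap: you never establish that the limiting Lagrange multiplier is strictly positive. You only record that the $\lambda_n$ are bounded and converge along a subsequence, but the splitting lemma you invoke (as in the paper's Lemma 2.4, hypothesis $(iv)$) requires $\lambda>0$: its proof uses $\lVert\nabla u_{1,n}\rVert_p^p+\lambda\lVert u_{1,n}\rVert_p^p=\lVert u_{1,n}\rVert_q^q+o_n(1)$ to deduce non-vanishing of $\lVert u_{1,n}\rVert_q$ from failure of strong convergence, which breaks down for $\lambda\leqslant 0$ (a sequence with $\lVert\nabla u_{1,n}\rVert_p\to 0$ but $\lVert u_{1,n}\rVert_p\not\to 0$ is then not excluded). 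Moreover, the lower bound $E_{\infty}(w^j)\geqslant m_a$ presupposes that the profiles solve the limit equation with a positive multiplier, which is exactly how $Z_a$ and $m_a$ are defined. In the paper this positivity is not automatic: it is derived from $\lambda a^p=-\lim_n E'(u_n)u_n=-a_0+b_0+c_0$ together with the Pohozaev relation and the bounds $\lvert c_n\rvert\leqslant S^{-1}\lVert V\rVert_{N/p}a_n$, $\lvert d_n\rvert\leqslant S^{-(p-1)/p}\lVert W\rVert_{N/(p-1)}a_n$, and it is precisely the step that forces the additional smallness condition (\ref{Vcon3}) (beyond what boundedness of the PS sequence alone requires). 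You would need to add this computation, and the constant $L$ in the statement is partly determined by it.
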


    \begin{theorem}\label{th2}
        Let $N\geqslant 1$, $p>1$, $r\in(\max\{1,\frac{N}{p}\},+\infty]$, $s\in(\max\{\frac{p}{p-1},\frac{N}{p-1}\},+\infty]$ and {\rm(\ref{Vgeq0})} holds. Moreover, we assume that
        	\[V\in L^r(\mathbb{R}^N)\quad\mbox{and}\quad\lim_{\lvert x\rvert\rightarrow+\infty}V(x)=0\ \mbox{if}\ r=+\infty,\]
        and $W\in L^s(\mathbb{R}^N)$.
        	
        \noindent$(i)$
        \begin{minipage}[t]{\linewidth}
        	There exist positive constants $\sigma(N,p,q,r)$ and $L(N,p,q,r)$ such that if
        		\begin{equation}\label{Vcon2min}
        	    	a^{\sigma}\lVert V\rVert_{r}<K
        	    \end{equation}
       	    and there exists $\varphi\in S_{a}$:
       	        \begin{equation}\label{Vcon3min}
       	        	\int_{\mathbb{R}^N}(\lvert\nabla\varphi\rvert^p-V(x)\lvert\varphi\rvert^p)dx\leqslant 0,
       	        \end{equation}
            then {\rm(\ref{equation})} has a solution on $S_{a}$ which is a local minimizer with negative energy.
        \end{minipage}

        \noindent$(ii)$
        \begin{minipage}[t]{\linewidth}
            There exist positive constants $\sigma_{i}(N,p,q,r)$, $\bar{\sigma}_{i}(N,p,q,s)(i=1,2)$ and $L(N,p,q,r,s)$ such that if
                \begin{equation}\label{Vconmount}
            		\max\{a^{\sigma_{i}}\lVert V\rVert_{r},a^{\bar{\sigma}_{i}}\lVert W\rVert_{s}\}<L,\quad i=1,2,
            	\end{equation}
            then {\rm(\ref{equation})} has a mountain pass solution on $S_{a}$ with positive energy.
        \end{minipage}
    \end{theorem}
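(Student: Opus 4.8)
The plan is to treat both parts by the constrained variational method of Jeanjean and Soave, adapted to the $p$-Laplacian and to a non-symmetric potential. Work with
\[
J(u)=\frac1p\int_{\mathbb{R}^N}\bigl(|\nabla u|^p-V(x)|u|^p\bigr)\,dx-\frac1q\int_{\mathbb{R}^N}|u|^q\,dx ,
\]
a constrained critical point of which solves \eqref{equation} with Lagrange multiplier $\lambda$. Since $q>p+p^2/N$, $J$ is unbounded below on $S_a$, so I would use the $L^p$-preserving dilation $s\star u:=e^{Ns/p}u(e^s\,\cdot)$, for which $\|\nabla(s\star u)\|_p^p=e^{ps}\|\nabla u\|_p^p$, $\|s\star u\|_q^q=e^{\frac{N(q-p)}{p}s}\|u\|_q^q$ and $\int V|s\star u|^p\,dx=\int V(e^{-s}y)|u(y)|^p\,dy$, and I would study the fibers $s\mapsto J(s\star u)$. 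Two estimates recur, with $r'=\frac{r}{r-1}$ and $s'=\frac{s}{s-1}$: by H\"older and the Gagliardo--Nirenberg inequality recalled above (interpolating $L^{pr'}$ between $L^p$ and $L^{p^*}$, allowed precisely because $r>\max\{1,N/p\}$), $\int V|u|^p\le C\|V\|_r\,a^{p-N/r}\|\nabla u\|_p^{N/r}$ with $N/r<p$; and, after an integration by parts, $\partial_s\big|_{s=0}\!\int V(e^{-s}\cdot)|u|^p=N\!\int V|u|^p+p\!\int V\,y\!\cdot\!|u|^{p-2}u\nabla u$, whose last term is bounded by $C\|W\|_s\,a^{\alpha}\|\nabla u\|_p^{\beta}$ with $\beta<p$ (H\"older together with the Sobolev embedding force exactly $s>p/(p-1)$ and $s>N/(p-1)$). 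The algebraic conditions \eqref{Vcon2min}--\eqref{Vconmount} are tuned to make these potential contributions genuinely lower order.

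For part (i), I would split $S_a$ into the well $B(a,\rho)=\{u\in S_a:\|\nabla u\|_p^p\le\rho\}$ and its exterior. From the two estimates, for a suitable $\rho$ the functional $J$ is bounded below on $B(a,\rho)$ and satisfies $J\ge\delta>0$ on $\{\|\nabla u\|_p^p=\rho\}$, while \eqref{Vcon3min} furnishes $\varphi$ with $J(\varphi)\le-\tfrac1q\|\varphi\|_q^q<0$; inserting $\int(|\nabla\varphi|^p-V|\varphi|^p)\le0$ into the interpolation estimate bounds $\|\nabla\varphi\|_p^p$ by a power of $\|V\|_r$, so that \eqref{Vcon2min} is exactly what puts $\varphi$ inside $B(a,\rho)$ and yields $m(a):=\inf_{B(a,\rho)}J\le J(\varphi)<0$. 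A minimizing sequence stays strictly inside the well, hence is bounded; let $u_n\rightharpoonup u$ in $W^{1,p}$. Because $V\in L^r$ and $V$ decays, $u\mapsto\int V|u|^p$ is sequentially weakly continuous, so a Brezis--Lieb splitting gives $m(a)=J(u)+J_\infty(\text{escaping part})+o(1)$ with $J_\infty$ the $V\equiv0$ functional, which is $\ge0$ on the well; since $V\ge0$, $V\not\equiv0$ one has the strict inequality $m(a)<0=m_\infty(a)$, which rules out vanishing, and the identity $J(u)=(\tfrac1p-\tfrac1q)\|u\|_q^q-\tfrac{\lambda}{p}a^p<0$ (forcing $\lambda>0$) rules out loss of mass. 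Hence $u\in S_a$, $u_n\to u$ in $L^p\cap L^q$, and norm convergence together with uniform convexity of $W^{1,p}$ upgrades this to strong convergence; thus $u$ is a constrained minimizer solving \eqref{equation} with $J(u)=m(a)<0$.

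For part (ii), I would run the min-max on $\mathbb{R}\times S_a$ for $\widetilde J(s,u):=J(s\star u)$. The estimates above and \eqref{Vconmount} give mountain-pass geometry for $J|_{S_a}$: $J\ge\delta>0$ on a sphere $\{\|\nabla u\|_p^p=\rho\}$, while $J(s\star u_0)\to-\infty$ as $s\to+\infty$ for fixed $u_0\in S_a$ (the factor $e^{\frac{N(q-p)}{p}s}$ beats the gradient term and, since $N/r<p<\frac{N(q-p)}{p}$, the potential term). Projecting back the min-max for $\widetilde J$ yields a Palais--Smale sequence $\{u_n\}\subset S_a$ at the level $c(a)=\inf_\gamma\max_t J(\gamma(t))\ge\delta>0$ with the extra property $P(u_n)\to0$, where $P(u):=\partial_s\big|_{s=0}J(s\star u)$; it is bounded because a combination $\theta_1 J(u_n)+\theta_2 P(u_n)$ eliminates $\|u_n\|_q^q$ and reduces, modulo lower-order potential terms, to $\bigl(N(q-p)-p^2\bigr)\|\nabla u_n\|_p^p$, whose coefficient is positive exactly because $q>p+p^2/N$. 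For compactness I would again take a weak limit $u$ and split; the decisive input is the strict inequality $c(a)<c_\infty(a)$ (the mountain-pass level with $V\equiv0$), obtained by taking the optimal $J_\infty$-fiber $t\mapsto t\star w_a$ \emph{without translating} the ground state $w_a>0$, so that $\int V|t\star w_a|^p>0$ uniformly over the relevant compact range of $t$ and the term $-\tfrac1p\int V|u|^p$ strictly lowers the maximum. Then a concentration argument using $c(a)<c_\infty(a)$ shows $u\ne 0$ and prevents loss of mass, so $u_n\to u$ in $L^p\cap L^q$, $\int V|u_n|^p\to\int V|u|^p$, and the $(S_+)$ property of $-\Delta_p$ gives $u_n\to u$ in $W^{1,p}$; thus $u\in S_a$ solves \eqref{equation} with $J(u)=c(a)>0$.

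The main obstacle, in both parts, is compactness: with $V$ non-radial there is no compact Sobolev embedding, so everything rests on (a) the sequential weak continuity of $u\mapsto\int V|u|^p$, which must be established from $V\in L^r$ plus its decay (splitting $\mathbb{R}^N$ into a large ball, where local compactness applies, and its complement, where $V$ is small), and on (b) the strict energy inequalities $m(a)<m_\infty(a)$ and $c(a)<c_\infty(a)$ forced by $V\not\equiv0$, which are the only barrier against mass escaping to infinity. A secondary difficulty specific to $p\ne2$ is the lack of a Hilbert structure: turning weak convergence into strong convergence and identifying $u$ as a genuine critical point with the correct $\lambda$ requires the $(S_+)$ property of the $p$-Laplacian and a Brezis--Lieb lemma for $\|\nabla\,\cdot\,\|_p^p$, in place of the elementary orthogonality arguments available when $p=2$.
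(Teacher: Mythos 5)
Your overall skeleton matches the paper's: a local minimization in a gradient ball for (i), a mountain-pass construction on the fiber map $\tilde E(s,u)=E(s\star u)$ with a Pohozaev-type constraint for (ii), a profile decomposition of the Palais--Smale sequences, and strict energy comparisons with the limit problem $V\equiv0$. The two estimates you isolate for $\int V|u|^p$ and $\int V\,x\cdot|u|^{p-2}u\nabla u$, and your explanation of why $r>\max\{1,N/p\}$ and $s>\max\{p/(p-1),N/(p-1)\}$ are forced, are exactly the paper's Lemmas \ref{infgeq0} and \ref{unbounded}. The gap is in the compactness step, which in both parts you assert rather than prove, and where the specific assertions you make would not close the argument.

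In part (i), after the splitting you must exclude the dichotomy in which the weak limit $u$ has mass $b<a$ while nontrivial profiles $w^j$ escape to infinity. Your argument for this is the identity $J(u)=(\tfrac1p-\tfrac1q)\|u\|_q^q-\tfrac{\lambda}{p}a^p<0$, but that identity already assumes $\|u\|_p=a$ (otherwise the last term is $\tfrac{\lambda}{p}b^p$), so it is circular and says nothing about loss of mass. What is actually needed is the monotonicity statement of Lemma~\ref{cVadec}, namely $c_{V,a}\leqslant\inf\{E(v):v\in S_b,\ \|\nabla v\|_p\leqslant R_a\}$ for every $0<b<a$, proved by rescaling $u\mapsto\frac{a}{b}u$ and checking that the rescaled function remains in the gradient ball; combined with $E_\infty(w^j)\geqslant m_{\|w^j\|_p}>0$ from Lemma~\ref{mageqz}, it forces $\sum_jE_\infty(w^j)\leqslant0$, the desired contradiction. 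In part (ii) the situation is more delicate than your sketch acknowledges: under the hypotheses of Theorem~\ref{th2} the weak limit $u$ of the mountain-pass PS sequence may have $E(u)<0$ (the no-negative-energy Lemma~\ref{Fgeq0} is only available under the $L^{N/p}$-smallness of Theorem~\ref{th1}, not here), so the strict inequality $m_{V,a}<m_a$ alone does not prevent loss of mass: one could have $m_{V,a}=E(u)+\sum_jE_\infty(w^j)$ with $\sum_jE_\infty(w^j)\geqslant m_{\beta_1}\geqslant m_a>m_{V,a}$ compensated by a negative $E(u)$. The paper closes this with the quantitative lower bound (\ref{Fugeqma}), $E(u)\geqslant-\theta m_a(\|u\|_p/a)^p$, obtained from the Pohozaev identity and assumption (\ref{Vconmount1}), together with the exact scaling $a^{p\theta}m_a=b^{p\theta}m_b$ of Lemma~\ref{made} and the elementary inequality $\min_{0<x\leqslant1}(-\theta+\theta x+x^{-\theta})=1$. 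Without an estimate of this type your concentration argument does not conclude, so this step needs to be supplied before the proof is complete.
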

    \begin{remark}
    	We point out that if there exist $x_{0}\in\mathbb{R}^N$ and $\delta>0$ such that
    	    \begin{equation}\label{Vinf}
    	    	\eta:=\inf_{x\in B_{\delta}(x_{0})}V(x)>0\quad\mbox{with}\ \eta\delta^p>N\Big(\frac{N-p}{p-1}\Big)^{p-1}\ \mbox{if}\ N>p,
    	    \end{equation}
        then {\rm(\ref{Vcon3min})} holds, where $B_{\delta}(x_{0})$ denotes the open ball centered at $x_{0}$ with radius $\delta$. This fact will be proved in Lemma {\rm\ref{existphi}}.
    \end{remark}

    The paper is organized as follows. In Section 2, we collect some preliminary results. In Section 3, we give the proof of Theorem \ref{th1}. In Section 3 is devote to proving Theorem \ref{th2}.

\section{Preliminaries}
    In this section, we collect some useful results which will be used throughout the rest of paper. Define
        \[T(s):=\left\{\begin{array}{ll}
             s,&\mbox{if}\ \lvert s\rvert\leqslant 1,\\
            	\frac{s}{\lvert s\rvert},&\mbox{if}\ \lvert s\rvert>1.
            \end{array}\right.\]
    \begin{lemma}\label{nablad1p}
        Let $N\geqslant 1$, $p>1$ and $\{u_{n}\}\subset D^{1,p}(\mathbb{R}^N)$ such that $u_{n}\rightharpoonup u$ in $D^{1,p}(\mathbb{R}^N)$, where $D^{1,p}(\mathbb{R}^N)$ denotes the completion of $C_{c}^{\infty}(\mathbb{R}^N)$ with respect to the norm $\lVert u\rVert_{D^{1,p}}:=\lVert\nabla u\rVert_{p}$. Assume that for every $\varphi\in C_{c}^{\infty}(\mathbb{R}^N)$, there is
        	\begin{equation}\label{nablaT}
        	    \lim_{n\rightarrow\infty}\int_{\mathbb{R^N}}\varphi(\lvert\nabla u_{n}\rvert^{p-2}\nabla u_{n}-\lvert\nabla u\rvert^{p-2}\nabla u)\cdot\nabla T(u_{n}-u)dx=0.
        	\end{equation}
        Then, up to a subsequence, $\nabla u_{n}\rightarrow\nabla u$ a.e. in $\mathbb{R}^N$.
    \end{lemma}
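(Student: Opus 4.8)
The plan is to carry out the classical Boccardo--Murat truncation argument, whose engine is the strict monotonicity of the map $\xi\mapsto|\xi|^{p-2}\xi$ on $\mathbb{R}^N$. Set
\[\mathcal{E}_n(x):=\big(|\nabla u_n(x)|^{p-2}\nabla u_n(x)-|\nabla u(x)|^{p-2}\nabla u(x)\big)\cdot\big(\nabla u_n(x)-\nabla u(x)\big),\]
which is nonnegative a.e. The goal will be to prove $\mathcal{E}_n\to 0$ in measure on every ball, and then to recover $\nabla u_n\to\nabla u$ a.e.\ from the pointwise real-variable fact that if $\xi_n\in\mathbb{R}^N$ and $(|\xi_n|^{p-2}\xi_n-|\eta|^{p-2}\eta)\cdot(\xi_n-\eta)\to 0$ for a fixed $\eta$, then $\xi_n\to\eta$ (the left side being nonnegative forces $\{\xi_n\}$ bounded; every convergent subsequence has a limit $\xi$ with $(|\xi|^{p-2}\xi-|\eta|^{p-2}\eta)\cdot(\xi-\eta)=0$, and strict monotonicity gives $\xi=\eta$).

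First I would fix $R>0$ and choose $\varphi\in C_c^\infty(\mathbb{R}^N)$ with $0\le\varphi\le 1$ and $\varphi\equiv 1$ on $B_R$. Since $T$ is Lipschitz with $T'\equiv 1$ on $(-1,1)$ and $T'\equiv 0$ on $\{|t|>1\}$, the chain rule for Sobolev functions (using that $\nabla(u_n-u)=0$ a.e.\ on $\{|u_n-u|=1\}$) gives $\nabla T(u_n-u)=\chi_{\{|u_n-u|\le 1\}}\nabla(u_n-u)$ a.e.; hence \eqref{nablaT} says $\int_{\mathbb{R}^N}\varphi\,\mathcal{E}_n\,\chi_{\{|u_n-u|\le 1\}}\,dx\to 0$, and since the integrand is nonnegative and $\varphi\equiv 1$ on $B_R$, we get $\int_{B_R\cap\{|u_n-u|\le 1\}}\mathcal{E}_n\,dx\to 0$. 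Two more inputs are needed: (a) $\{\nabla u_n\}$ is bounded in $L^p(\mathbb{R}^N)$ by weak convergence, so expanding $\mathcal{E}_n$ and using H\"older shows $\int_{B_R}\mathcal{E}_n\,dx\le C$ uniformly in $n$; (b) weak convergence in $D^{1,p}$ together with the Sobolev embedding and Rellich--Kondrachov on bounded domains yields, along a subsequence, $u_n\to u$ a.e.\ and in $L^1_{loc}$, whence $\mathrm{meas}\big(B_R\cap\{|u_n-u|>1\}\big)\to 0$.

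Then I would combine these via a sublinear power: for $0<\theta<1$, splitting $B_R$ along $\{|u_n-u|\le 1\}$ and its complement and applying H\"older with exponents $1/\theta$ and $1/(1-\theta)$ gives
\[\int_{B_R}\mathcal{E}_n^\theta\,dx\le\Big(\int_{B_R\cap\{|u_n-u|\le 1\}}\mathcal{E}_n\,dx\Big)^{\!\theta}|B_R|^{1-\theta}+\Big(\int_{B_R}\mathcal{E}_n\,dx\Big)^{\!\theta}\big(\mathrm{meas}(B_R\cap\{|u_n-u|>1\})\big)^{1-\theta},\]
which tends to $0$ by the previous step. Hence $\mathcal{E}_n\to 0$ in $L^\theta(B_R)$, so $\mathcal{E}_n\to 0$ in measure on $B_R$; a diagonal extraction over an exhaustion of $\mathbb{R}^N$ by balls then yields a single subsequence along which $\mathcal{E}_n\to 0$ a.e.\ in $\mathbb{R}^N$. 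Applying the real-variable fact above at a.e.\ $x$ with $\xi_n=\nabla u_n(x)$ and $\eta=\nabla u(x)$ gives $\nabla u_n\to\nabla u$ a.e., as claimed.

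The main obstacle is precisely that \eqref{nablaT} only controls $\mathcal{E}_n$ on the ``good set'' $\{|u_n-u|\le 1\}$; bridging to the whole ball requires both the local strong convergence $u_n\to u$ (to make the complementary set negligible in measure) and the exponent $\theta<1$, so that the merely bounded quantity $\int_{B_R}\mathcal{E}_n$ causes no harm on that bad set. I would also record, for the range $p\ge N$ where $D^{1,p}(\mathbb{R}^N)$ is less standard, that what is actually used is only $u_n\rightharpoonup u$ in $W^{1,p}_{loc}(\mathbb{R}^N)$ and Rellich--Kondrachov on bounded domains, which hold in the settings where the lemma is invoked.
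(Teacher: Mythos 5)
Your proof is correct, and it follows the same overall skeleton as the paper's: exploit the pointwise nonnegativity of $(\lvert\nabla u_{n}\rvert^{p-2}\nabla u_{n}-\lvert\nabla u\rvert^{p-2}\nabla u)\cdot\nabla T(u_{n}-u)$ to localize the hypothesis to balls, obtain a.e.\ convergence of the gradients on each ball, and finish with a Cantor diagonal extraction. The difference is in the middle step: the paper disposes of it in one line by citing de Valeriola--Willem \cite{dvswm}, Theorem 1.1, whereas you prove it from scratch by running the Boccardo--Murat truncation argument in full --- the chain rule $\nabla T(u_n-u)=\chi_{\{\lvert u_n-u\rvert\le 1\}}\nabla(u_n-u)$, the uniform bound $\int_{B_R}\mathcal{E}_n\le C$, the vanishing measure of the bad set $\{\lvert u_n-u\rvert>1\}$ via local compactness, the $\theta$-power H\"older split giving $\int_{B_R}\mathcal{E}_n^{\theta}\to 0$, and the strict monotonicity of $\xi\mapsto\lvert\xi\rvert^{p-2}\xi$ to pass from $\mathcal{E}_n\to 0$ a.e.\ to $\nabla u_n\to\nabla u$ a.e. What your version buys is self-containedness, plus an explicit record of the only genuine hypothesis hidden in the citation, namely that one needs $u_n\to u$ locally in measure (hence some local compactness beyond bare weak $D^{1,p}$-convergence when $p\geqslant N$); you correctly note that this is available in every setting where the lemma is actually invoked, since there the sequences are weakly convergent in $W^{1,p}(\mathbb{R}^N)$. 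The paper's version buys brevity at the cost of outsourcing exactly that step.
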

        \begin{proof}
        	Let $k\in\mathbb{N}_{+}$ and $\varphi\in C_{c}^{\infty}(\mathbb{R}^N)$ satisfies
        	    \[0\leqslant\varphi\leqslant 1\quad\varphi_{k}=1\ \mbox{in}\ B_{k}\quad\mbox{and}\quad\varphi_{k}=0\ \mbox{in}\ \ B_{k+1}^c.\]
        	Since
        	    \[(\lvert\nabla u_{n}\rvert^{p-2}\nabla u_{n}-\lvert\nabla u\rvert^{p-2}\nabla u)\cdot\nabla T(u_{n}-u)\geqslant 0,\]
        	we have
        	    \begin{equation}
        	    	\lim_{n\rightarrow\infty}\int_{B_{k}}(\lvert\nabla u_{n}\rvert^{p-2}\nabla u_{n}-\lvert\nabla u\rvert^{p-2}\nabla u)\cdot\nabla T(u_{n}-u)dx=0.
        	    \end{equation}
        	Therefore, by \cite[Theorem 1.1]{dvswm}, up to a subsequence, we have $\nabla u_{k,n}\rightarrow\nabla u$ a.e. in $B_{k}$. Now, using the Cantor diagonal argument, we complete the proof.
        \end{proof}

        Let
            \[E_{\infty}(u)=\frac{1}{p}\lVert\nabla u\rVert_{p}^p-\frac{1}{q}\lVert u\rVert_{q}^q,\]

            \[E(u)=\frac{1}{p}\lVert\nabla u\rVert_{p}^p-\frac{1}{q}\lVert u\rVert_{q}^q-\frac{1}{p}\int_{\mathbb{R}^N}V(x)\lvert u\rvert^pdx,\]

            \[E_{\infty,\lambda}=\frac{1}{p}\lVert\nabla u\rVert_{p}^p+\frac{\lambda}{p}\lVert u\rVert_{p}^p-\frac{1}{q}\lVert u\rVert_{q}^q,\]
        and
            \[E_{\lambda}(u)=\frac{1}{p}\lVert\nabla u\rVert_{p}^p+\frac{\lambda}{p}\lVert u\rVert_{p}^p-\frac{1}{q}\lVert u\rVert_{q}^q-\frac{1}{p}\int_{\mathbb{R}^N}V(x)\lvert u\rvert^pdx.\]
        \begin{lemma}\label{convergence}
        	Let $N\geqslant 1$, $p>1$ and
        	
        	 $(i)$ If $N>p$, then $V\in L^{N/p}(B_{1})$ and $V\in L^{\tilde{r}}(\mathbb{R}^N\backslash B_{1})$ for some $\tilde{r}\in[N/p,+\infty]$,
        	
        	 $(ii)$ If $N\leqslant p$, then $V\in L^{r}(B_{1})$ and $V\in L^{\tilde{r}}(\mathbb{R}^N\backslash B_{1})$ for some $r,\tilde{r}\in (1,+\infty]$.
        	
        	\noindent If $\{u_{n}\}$ is a PS sequence for $E_{\lambda}$ in $W^{1,p}(\mathbb{R}^N)$ and $u_{n}\rightharpoonup u$ in $W^{1,p}(\mathbb{R}^N)$. Then, $\nabla u_{n}\rightarrow\nabla u$ a.e. in $\mathbb{R}^N$.
        \end{lemma}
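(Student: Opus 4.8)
The plan is to reduce everything to Lemma \ref{nablad1p}: it suffices to verify hypothesis (\ref{nablaT}), namely that for every $\varphi\in C_c^\infty(\mathbb{R}^N)$ one has
\[
A_n:=\int_{\mathbb{R}^N}\varphi\big(\lvert\nabla u_n\rvert^{p-2}\nabla u_n-\lvert\nabla u\rvert^{p-2}\nabla u\big)\cdot\nabla T(u_n-u)\,dx\longrightarrow 0 .
\]
Since $u_n\rightharpoonup u$ in $W^{1,p}(\mathbb{R}^N)$, the sequence $\{u_n\}$ is bounded, and by Rellich--Kondrachov (passing to a subsequence) $u_n\to u$ in $L^p_{loc}$ and a.e. in $\mathbb{R}^N$. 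Because $T$ is bounded and Lipschitz with $T(0)=0$, $\lvert T(s)\rvert\le\min\{\lvert s\rvert,1\}$ and $\nabla T(u_n-u)=\chi_{\{\lvert u_n-u\rvert<1\}}\nabla(u_n-u)$ a.e., the function $\varphi T(u_n-u)$ belongs to $W^{1,p}(\mathbb{R}^N)$, has compact support, and $\{\varphi T(u_n-u)\}$ is bounded in $W^{1,p}(\mathbb{R}^N)$. Hence the Palais--Smale property gives $\langle E_\lambda'(u_n),\varphi T(u_n-u)\rangle\to 0$.

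Expanding this pairing and isolating the quadratic gradient term yields
\[
\int\varphi\lvert\nabla u_n\rvert^{p-2}\nabla u_n\cdot\nabla T(u_n-u)\,dx
=-\int\lvert\nabla u_n\rvert^{p-2}\nabla u_n\cdot\nabla\varphi\,T(u_n-u)\,dx
-\lambda\int\lvert u_n\rvert^{p-2}u_n\varphi T(u_n-u)\,dx
+\int\lvert u_n\rvert^{q-2}u_n\varphi T(u_n-u)\,dx
+\int V(x)\lvert u_n\rvert^{p-2}u_n\varphi T(u_n-u)\,dx+o(1).
\]
Each term on the right tends to $0$: since $\lvert T(u_n-u)\rvert\le 1$, $T(u_n-u)\to 0$ a.e. and $\operatorname{supp}\varphi$ has finite measure, the dominated convergence theorem gives $T(u_n-u)\to 0$ in $L^\gamma(\operatorname{supp}\varphi)$ for every $\gamma\in[1,\infty)$; combined with the boundedness of $\{\lvert\nabla u_n\rvert^{p-1}\}$ in $L^{p/(p-1)}$, of $\{\lvert u_n\rvert^{p-1}\}$ in $L^{p/(p-1)}$, and of $\{\lvert u_n\rvert^{q-1}\}$ in $L^{q/(q-1)}$ (Sobolev embedding $W^{1,p}\hookrightarrow L^q$), Hölder's inequality disposes of the first three terms. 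For the potential term one splits $\operatorname{supp}\varphi$ into its intersection with $B_1$ and with $\mathbb{R}^N\setminus B_1$; using $V\in L^{N/p}(B_1)$ (resp.\ $V\in L^r(B_1)$) and $V\in L^{\tilde r}(\mathbb{R}^N\setminus B_1)$ together with the Sobolev bound on $\{\lvert u_n\rvert^{p-1}\}$ (in $L^{p^*/(p-1)}$ when $N>p$, in $L^t$ for a suitable finite $t$ when $N\le p$) and the $L^\gamma$-convergence of $T(u_n-u)$, Hölder's inequality again gives $0$. On the other hand, $\nabla(u_n-u)\rightharpoonup 0$ in $L^p$ while $\varphi\lvert\nabla u\rvert^{p-2}\nabla u\,\chi_{\{\lvert u_n-u\rvert<1\}}\to\varphi\lvert\nabla u\rvert^{p-2}\nabla u$ strongly in $L^{p/(p-1)}$ (dominated convergence, using $\chi_{\{\lvert u_n-u\rvert<1\}}\to 1$ a.e.), so the strong/weak pairing gives $\int\varphi\lvert\nabla u\rvert^{p-2}\nabla u\cdot\nabla T(u_n-u)\,dx\to 0$. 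Subtracting this from the previous display shows $A_n\to 0$, which is (\ref{nablaT}), and Lemma \ref{nablad1p} concludes.

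The main obstacle is the potential term $\int V(x)\lvert u_n\rvert^{p-2}u_n\varphi T(u_n-u)\,dx$: in each of the regimes $N>p$ and $N\le p$ one must choose Hölder exponents so that the local integrability of $V$ assumed in (i)--(ii), the Sobolev integrability of $\lvert u_n\rvert^{p-1}$, and the strong $L^\gamma$-convergence of $T(u_n-u)$ fit together — this is precisely where the hypotheses on $V$ enter. The remaining ingredients are routine: admissibility and boundedness of the test function $\varphi T(u_n-u)$ in $W^{1,p}$, the elementary bound $\lvert T(s)\rvert\le\min\{\lvert s\rvert,1\}$, the Lipschitz chain rule $\nabla T(u_n-u)=\chi_{\{\lvert u_n-u\rvert<1\}}\nabla(u_n-u)$, and the fact that a strongly convergent factor against a weakly convergent one passes to the limit.
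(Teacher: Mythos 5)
Your proposal is correct and follows the paper's proof in all essentials: both reduce to Lemma \ref{nablad1p} by verifying (\ref{nablaT}), and both obtain the key identity by testing the Palais--Smale condition with $\psi=\varphi T(u_n-u)$ and estimating the resulting terms (with the potential term split over $B_1$ and its complement exactly as you describe). The only difference is the technical device for passing to the limit: where you use dominated convergence (strong $L^\gamma$-convergence of $T(u_n-u)$ to $0$ on $\operatorname{supp}\varphi$) together with a strong--weak pairing for the $\nabla u$ term, the paper uses Egorov's theorem and absolute continuity of the integral --- both mechanisms are valid and deliver the same estimates.
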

        \begin{proof}
        	By Lemma \ref{nablad1p}, we just need to prove (\ref{nablaT}). Since $u_{n}\rightharpoonup u$ in $W^{1,p}(\mathbb{R}^N)$, we can assume that $u_{n}\rightarrow u$ a.e. in $\mathbb{R}^N$. Therefore, the Egorov's theorem implies that for every $\delta>0$, there exists $F_{\delta}\subset\mbox{supp}\varphi$ such that $u_{n}\rightarrow u$ uniformly in $F_{\delta}$ and $m(\mbox{supp}\varphi\backslash F_{\delta})<\delta$. Hence, $\lvert u_{n}(x)-u(x)\rvert\leqslant 1$ for all $x\in F_{\delta}$ as long as $n$ sufficiently large.
        	
        	Now, we have
        	    \begin{align*}
        	    	&\limsup_{n\rightarrow\infty}\Big|\int_{\mathbb{R}^N}\varphi\lvert\nabla u\rvert^{p-2}\nabla u\cdot\nabla T(u_{n}-u)dx\Big|\nonumber\\
        	    	\leqslant&\limsup_{n\rightarrow\infty}\Big|\int_{F_{\delta}}\varphi\lvert\nabla u\rvert^{p-2}\nabla u\cdot\nabla T(u_{n}-u)dx\Big|+\limsup_{n\rightarrow\infty}\Big|\int_{F_{\delta}^c}\varphi\lvert\nabla u\rvert^{p-2}\nabla u\cdot\nabla T(u_{n}-u)dx\Big|\nonumber\\
        	    	=&\limsup_{n\rightarrow\infty}\Big|\int_{F_{\delta}^c}\varphi\lvert\nabla u\rvert^{p-2}\nabla u\cdot\nabla T(u_{n}-u)dx\Big|,
        	    \end{align*}
            since $u_{n}\rightharpoonup u$ in $W^{1,p}(\mathbb{R}^N)$. For every $\varepsilon>0$, by the definition of $T$,
                \[\Big|\int_{F_{\delta}^c}\varphi\lvert\nabla u\rvert^{p-2}\nabla u\cdot\nabla T(u_{n}-u)dx\Big|\leqslant\int_{F_{\delta}^c}\lvert\varphi\rvert\lvert\nabla u\rvert^{p-1}dx<\varepsilon,\]
            as long as $\delta$ sufficiently small, which implies
                \begin{equation}\label{nablau}
                	\lim_{n\rightarrow\infty}\int_{\mathbb{R}^N}\varphi\lvert\nabla u\rvert^{p-2}\nabla u\cdot\nabla T(u_{n}-u)dx=0.
                \end{equation}

            Next, we prove
                \[\lim_{n\rightarrow\infty}\int_{\mathbb{R^N}}\varphi\lvert\nabla u_{n}\rvert^{p-2}\nabla u_{n}\cdot\nabla T(u_{n}-u)dx=0,\]
            which together with (\ref{nablau}) implies (\ref{nablaT}) holds. Since $\{u_{n}\}$ is a PS sequence for $E_{\lambda}$ in $W^{1,p}(\mathbb{R}^N)$, we have
                \begin{align*}
                	\int_{\mathbb{R}^N}\lvert\nabla u_{n}\rvert^{p-2}\nabla u_{n}\cdot\nabla\psi dx=&-\lambda\int_{\mathbb{R}^N}\lvert u_{n}\rvert^{p-2}u_{n}\psi dx+\int_{\mathbb{R}^N}V(x)\lvert u_{n}\rvert^{p-2}u_{n}\psi dx\\
                	&+\int_{\mathbb{R^N}}\lvert u_{n}\rvert^{q-2}u_{n}\psi dx+o_{n}(1)\lVert\psi\rVert_{W^{1,p}},
                \end{align*}
            for every $\psi\in W^{1,p}(\mathbb{R}^N)$. Let $\psi=\varphi T(u_{n}-u)$, then
                \begin{align}\label{nablaun}
                	&\limsup_{n\rightarrow\infty}\Big\lvert\int_{\mathbb{R^N}}\varphi\lvert\nabla u_{n}\rvert^{p-2}\nabla u_{n}\cdot\nabla T(u_{n}-u)dx\Big\rvert\nonumber\\
                	\leqslant&\limsup_{n\rightarrow\infty}\int_{\mathbb{R}^N}\Big(\lvert\nabla u_{n}\rvert^{p-1}\lvert T(u_{n}-u)\nabla\varphi\rvert+\lvert\lambda\rvert\lvert u_{n}\rvert^{p-1}\lvert\varphi T(u_{n}-u)\rvert\nonumber\\
                	&+\lvert V(x)\rvert\lvert u_{n}\rvert^{p-1}\lvert\varphi T(u_{n}-u)\rvert+\lvert u_{n}\rvert^{q-1}\lvert\varphi T(u_{n}-u)\rvert\Big)dx.
                \end{align}
            We know
                \begin{equation*}
                	\limsup_{n\rightarrow\infty}\int_{\mathbb{R}^N}\lvert V(x)\rvert\lvert u_{n}\rvert^{p-1}\lvert\varphi T(u_{n}-u)\rvert dx\leqslant\limsup_{n\rightarrow\infty}\int_{F_{\delta}^c}\lvert V(x)\rvert\lvert u_{n}\rvert^{p-1}\lvert\varphi\rvert dx.
                \end{equation*}
            If $N>p$, by the H\"older inequality
                \begin{equation*}
                	\limsup_{n\rightarrow\infty}\int_{F_{\delta}^c\cap B_{1}}\lvert V(x)\rvert\lvert u_{n}\rvert^{p-1}\lvert\varphi\rvert dx\leqslant\limsup_{n\rightarrow\infty}\lVert V\rVert_{L^{\frac{N}{p}}(B_{1})}\lVert u_{n}\rVert_{p^*}^{p-1}\lVert\varphi\rVert_{L^{\frac{Np}{Np+p-N}}(F_{\delta}^c)}<\varepsilon,
                \end{equation*}
            and
                \begin{align*}
                	\limsup_{n\rightarrow\infty}\int_{F_{\delta}^c\cap B_{1}^c}\lvert V(x)\rvert\lvert u_{n}\rvert^{p-1}\lvert\varphi\rvert dx
                	\leqslant \limsup_{n\rightarrow\infty}\lVert V\rVert_{L^{\tilde{r}}(B_{1}^c)}\lVert u_{n}\rVert_{p^*}^{p-1}\lVert\varphi\rVert_{L^{\frac{Np\tilde{r}}{Np\tilde{r}-r+(N-p)(p-1)}}(F_{\delta}^c)}<\varepsilon,
                \end{align*}
            for every $\varepsilon>0$, by taking $\delta$ sufficiently small. Therefore, we can ensure that
                \[\limsup_{n\rightarrow\infty}\int_{\mathbb{R}^N}\lvert V(x)\rvert\lvert u_{n}\rvert^{p-1}\lvert\varphi T(u_{n}-u)\rvert dx\leqslant C\varepsilon.\]
            If $N\leqslant p$, we can also guarantee that
               \[\limsup_{n\rightarrow\infty}\int_{\mathbb{R}^N}\lvert V(x)\rvert\lvert u_{n}\rvert^{p-1}\lvert\varphi T(u_{n}-u)\rvert dx\leqslant C\varepsilon.\]
            Similarly, we have
                \[\limsup_{n\rightarrow\infty}\int_{\mathbb{R}^N}\lvert\nabla u_{n}\rvert^{p-1}\lvert T(u_{n}-u)\nabla\varphi\rvert dx\leqslant C\varepsilon,\]
                \[\limsup_{n\rightarrow\infty}\int_{\mathbb{R}^N}\lvert u_{n}\rvert^{p-1}\lvert\varphi T(u_{n}-u)\rvert dx\leqslant C\varepsilon,\]
            and
                \[\limsup_{n\rightarrow\infty}\int_{\mathbb{R}^N}\lvert u_{n}\rvert^{q-1}\lvert\varphi T(u_{n}-u)\rvert dx\leqslant C\varepsilon.\]
            Therefore, from (\ref{nablaun}), we obtain
                \[\limsup_{n\rightarrow\infty}\Big\lvert\int_{\mathbb{R}^N}\varphi\lvert\nabla u_{n}\rvert^{p-2}\nabla u_{n}\cdot\nabla T(u_{n}-u)dx\Big\rvert\leqslant C\varepsilon,\]
            which implies
                \[\lim_{n\rightarrow\infty}\int_{\mathbb{R^N}}\varphi\lvert\nabla u_{n}\rvert^{p-2}\nabla u_{n}\cdot\nabla T(u_{n}-u)dx=0.\]
        \end{proof}

        \begin{remark}\label{usolution}
        	If $\{u_{n}\}$ is a PS sequence for $E_{\lambda}$ in $W^{1,p}(\mathbb{R}^N)$ and $u_{n}\rightharpoonup u$ in $W^{1,p}(\mathbb{R}^N)$. Then, by Lemma {\rm\ref{convergence}} and weak convergence, $u$ is a solution of {\rm(\ref{equation})}.
        \end{remark}

        \begin{lemma}\label{vnps}
        	Let $N\geqslant 1$, $p>1$, and $V$ satisfies the assumptions of Lemma {\rm \ref{convergence}}. Assume $\{u_{n}\}$ is a PS sequence for $E_{\lambda}$ in $W^{1,p}(\mathbb{R}^N)$, and $u_{n}\rightarrow u$ in $W^{1,p}(\mathbb{R}^N)$. Let $v_{n}=u_{n}-u$. Then, $\{v_{n}\}$ is a PS sequence for $E_{\infty,\lambda}$.
        \end{lemma}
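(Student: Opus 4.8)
The plan is to verify the two defining properties of a Palais--Smale sequence for $E_{\infty,\lambda}$ — boundedness of $\{E_{\infty,\lambda}(v_n)\}$ and $E_{\infty,\lambda}'(v_n)\to0$ in $(W^{1,p}(\mathbb{R}^N))^{*}$ — by transferring, term by term, a Brezis--Lieb splitting together with a compactness property of the potential term. We work under $u_n\rightharpoonup u$ in $W^{1,p}(\mathbb{R}^N)$ (the case of interest; if $u_n\to u$ strongly then $v_n\to0$ and the conclusion is immediate): then $\{u_n\}$, hence $\{v_n\}$, is bounded and $v_n\rightharpoonup0$; by Lemma~\ref{convergence} we have $\nabla u_n\to\nabla u$ a.e.\ in $\mathbb{R}^N$, and $u_n\to u$ a.e.\ after passing to a subsequence; and by Remark~\ref{usolution} the limit $u$ solves \eqref{equation}, i.e.\ $\langle E_\lambda'(u),\psi\rangle=0$ for every $\psi\in W^{1,p}(\mathbb{R}^N)$.

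For the energy, I would first apply the Brezis--Lieb lemma to the $L^1$-bounded sequences $\{|\nabla u_n|^p\}$, $\{|u_n|^p\}$ and $\{|u_n|^q\}$ to get
\begin{gather*}
\lVert\nabla u_n\rVert_p^p=\lVert\nabla v_n\rVert_p^p+\lVert\nabla u\rVert_p^p+o_n(1),\qquad\lVert u_n\rVert_p^p=\lVert v_n\rVert_p^p+\lVert u\rVert_p^p+o_n(1),\\
\lVert u_n\rVert_q^q=\lVert v_n\rVert_q^q+\lVert u\rVert_q^q+o_n(1),
\end{gather*}
and then establish $\int_{\mathbb{R}^N}V(x)\lvert u_n\rvert^p\,dx\to\int_{\mathbb{R}^N}V(x)\lvert u\rvert^p\,dx$ via the decomposition $\mathbb{R}^N=B_R\cup B_R^c$ already used in Lemma~\ref{convergence}: on $B_R$ one combines the compact embedding $W^{1,p}(B_R)\hookrightarrow L^p(B_R)$ with the integrability of $V$ on $B_R$, and on $B_R^c$ one uses the smallness of $\lVert V\rVert_{L^r(B_R^c)}$ (or of $\sup_{B_R^c}V$ when $r=+\infty$) for $R$ large. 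These facts combine to give $E_{\infty,\lambda}(v_n)=E_\lambda(u_n)-E_\lambda(u)+o_n(1)$, so $\{E_{\infty,\lambda}(v_n)\}$ is bounded.

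For the differential, I would fix $\psi\in W^{1,p}(\mathbb{R}^N)$ and use the algebraic identity
\[
\langle E_{\infty,\lambda}'(v_n),\psi\rangle=\langle E_\lambda'(u_n),\psi\rangle-\langle E_\lambda'(u),\psi\rangle-\bigl(R_n^1(\psi)+R_n^2(\psi)+R_n^3(\psi)+R_n^4(\psi)\bigr),
\]
where $R_n^1,R_n^2,R_n^4$ are the cross remainders of the Brezis--Lieb splitting of the maps $a\mapsto|\nabla a|^{p-2}\nabla a$, $a\mapsto|a|^{p-2}a$ and $a\mapsto|a|^{q-2}a$ (for instance $R_n^1(\psi)=\int_{\mathbb{R}^N}\bigl(|\nabla u_n|^{p-2}\nabla u_n-|\nabla v_n|^{p-2}\nabla v_n-|\nabla u|^{p-2}\nabla u\bigr)\cdot\nabla\psi\,dx$, with $R_n^2$ carrying the factor $\lambda$) and $R_n^3(\psi)=-\int_{\mathbb{R}^N}V(x)\bigl(|u_n|^{p-2}u_n-|u|^{p-2}u\bigr)\psi\,dx$ is the potential remainder. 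By H\"older, $|R_n^1(\psi)|\le\lVert g_n\rVert_{p'}\lVert\nabla\psi\rVert_p$ with $g_n=|\nabla u_n|^{p-2}\nabla u_n-|\nabla v_n|^{p-2}\nabla v_n-|\nabla u|^{p-2}\nabla u$, and the vector Brezis--Lieb lemma for the $p$-Laplacian operator — using $\nabla u_n\to\nabla u$ a.e.\ and $\{|\nabla u_n|^{p-1}\}$ bounded in $L^{p'}$ — gives $\lVert g_n\rVert_{p'}\to0$; similarly $R_n^2$ and $R_n^4$ are bounded by $\lVert\psi\rVert_{W^{1,p}}$ times quantities tending to $0$ in $L^{p'}$, resp.\ $L^{q'}$ (here $q<p^*$ ensures $\{u_n\}$ is bounded in $L^q$), while $|R_n^3(\psi)|\to0$ uniformly on $\lVert\psi\rVert_{W^{1,p}}\le1$ by the $B_R/B_R^c$ argument of the previous paragraph. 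Since $\langle E_\lambda'(u_n),\psi\rangle=o_n(1)\lVert\psi\rVert_{W^{1,p}}$ and $\langle E_\lambda'(u),\psi\rangle=0$, this gives $\sup_{\lVert\psi\rVert_{W^{1,p}}\le1}|\langle E_{\infty,\lambda}'(v_n),\psi\rangle|\to0$, i.e.\ $E_{\infty,\lambda}'(v_n)\to0$.

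I expect the main obstacle to be the two uniform-in-$\psi$ vanishings. The first is the operator Brezis--Lieb convergence $\lVert g_n\rVert_{p'}\to0$, which relies essentially on the a.e.\ gradient convergence of Lemma~\ref{convergence} and, for $1<p<2$, must use the bound $\bigl||\xi|^{p-2}\xi-|\eta|^{p-2}\eta\bigr|\le C|\xi-\eta|^{p-1}$ rather than $\bigl||\xi|^{p-2}\xi-|\eta|^{p-2}\eta\bigr|\le C(|\xi|+|\eta|)^{p-2}|\xi-\eta|$ (valid for $p\ge2$), together with a Vitali/equi-integrability argument; the second is the uniform smallness of $R_n^3$, where the integrability/decay assumptions on $V$ inherited from Lemma~\ref{convergence} are precisely what make the tail on $B_R^c$ controllable independently of $\psi$. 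The cross terms $R_n^2,R_n^4$ and the $L^p,L^q$ splittings in the energy step are routine given the Sobolev embeddings.
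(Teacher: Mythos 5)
Your proposal is correct and follows essentially the same route as the paper: Br\'ezis--Lieb splittings of the energy terms, the ball/complement decomposition with the integrability (or decay) of $V$ to kill the potential remainders, the operator-level Br\'ezis--Lieb convergence for $|\nabla u_n|^{p-2}\nabla u_n-|\nabla v_n|^{p-2}\nabla v_n-|\nabla u|^{p-2}\nabla u$ in $L^{p/(p-1)}$ (which the paper takes from Mercuri--Willem, resting on the a.e.\ gradient convergence of Lemma~\ref{convergence}), and the fact that the weak limit $u$ solves \eqref{equation} so that $\langle E_\lambda'(u),\psi\rangle$ drops out. The only cosmetic difference is that for the potential term the paper pairs the weak convergence $|v_n|^p\rightharpoonup 0$ in $L^{N/(N-p)}(B_1)$ directly against $V\in L^{N/p}(B_1)$ rather than invoking a compact embedding, but this is the same standard mechanism.
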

        \begin{proof}
        	Since $u_{n}\rightharpoonup u$ in $W^{1,p}(\mathbb{R}^N)$, we have $v_{n}\rightharpoonup 0$ in $W^{1,p}(\mathbb{R}^N)$, $v_{n}\rightarrow 0$ in $L_{loc}^{p}(\mathbb{R}^N)$, $L_{loc}^q(\mathbb{R}^N)$, and a.e. in $\mathbb{R}^N$. Set
        	    \begin{align}\label{V}
        		    \int_{\mathbb{R}^N}V(x)\lvert v_{n}\rvert^pdx=\int_{B_{1}}V(x)\lvert v_{n}\rvert^pdx+\int_{B_{1}^c}V(x)\lvert v_{n}\rvert^pdx=A_{n}+B_{n}.
        	    \end{align}
        	Firstly, we assume $N>p$ and $\tilde{r}<+\infty$. Let $\tilde{r}'$ be the conjugate exponent of $\tilde{r}$. Since $\{\lvert v_{n}\rvert^p\}$ is bounded in $L^{N/(N-p)}(B_{1})$ and $L^{\tilde{r}'}(B_{1}^c)$, we have $\lvert v_{n}\rvert^p\rightharpoonup 0$ in $L^{N/(N-p)}(B_{1})$ and $L^{\tilde{r}'}(B_{1}^c)$ and hence $A_{n},B_{n}\rightarrow 0$ as $n\rightarrow\infty$. In the case $\tilde{r}=+\infty$, it is not difficult to prove that $A_{n}\rightarrow 0$ as $n\rightarrow\infty$. We know $v_{n}\rightarrow 0$ in $L_{loc}^p(\mathbb{R}^N)$, thus, for every $R>1$, there holds
        	    \[\limsup_{n\rightarrow\infty}\lvert B_{n}\rvert=\limsup_{n\rightarrow\infty}\Big\lvert\int_{B_{R}^c}V(x)\lvert v_{n}\rvert^{p}dx\Big\rvert\leqslant C\sup_{B_{R}^c}\lvert V\rvert,\]
        	which implies $B_{n}\rightarrow 0$ as $n\rightarrow\infty$. Similar arguments for $N\leqslant p$ can also prove that $A_{n},B_{n}\rightarrow 0$ as $n\rightarrow\infty$. To sum up, in any case we have
        	    \begin{equation}\label{Vcon0}
        		    \int_{\mathbb{R}^N}V(x)\lvert v_{n}\rvert^pdx\rightarrow 0\quad\mbox{as}\ n\rightarrow\infty,
        	    \end{equation}
        	which implies
        	    \begin{equation}\label{VunptendVup}
        	    	\int_{\mathbb{R}^N}V(x)\lvert u_{n}\rvert^pdx\rightarrow\int_{\mathbb{R}^N}V(x)\lvert u\rvert^pdx\quad\mbox{as}\ n\rightarrow\infty.
        	    \end{equation}
        	
        	Since $\{u_{n}\}$ is a PS sequence for $E_{\lambda}$, there exists $c\in\mathbb{R}$ such that
        	    \[E_{\lambda}(u_{n})\rightarrow c\quad\mbox{and}\quad\lVert E_{\lambda}'(u_{n})\rVert\rightarrow 0\ \mbox{in}\ W^{-1,p}(\mathbb{R}^N)\quad\mbox{as}\ n\rightarrow\infty.\]
        	By Br\'{e}zis-Lieb lemma and Lemma \ref{convergence}, we have
        	     \[E_{\lambda}(u_{n})=E_{\lambda}(u)+E_{\infty,\lambda}(v_{n})+o_{n}(1),\]
        	which implies
        	    \begin{equation}\label{Einflamvn}
        	    	E_{\infty,\lambda}(v_{n})\rightarrow c-E_{\lambda}(u)\quad\mbox{as}\ n\rightarrow\infty.
        	    \end{equation}
        	
        	Finally, we prove that
        	    \begin{equation*}
        	    	\lVert E_{\infty,\lambda}'(v_{n})\rVert_{W^{-1,p}}\rightarrow 0\quad\mbox{as}\ n\rightarrow\infty,
        	    \end{equation*}
            which together with (\ref{Einflamvn}) implies $\{v_{n}\}$ is a PS sequence for $E_{\infty,\lambda}$. We just need to prove that
                \[E_{\infty,\lambda}'(v_{n})\psi=o_{n}(1)\lVert\psi\rVert_{W^{1,p}}\quad\forall\psi\in W^{1,p}(\mathbb{R}^N),\]
        	that is
        	    \begin{equation}\label{Einflamdervn}
        	    	\int_{\mathbb{R}^N}(\lvert\nabla v_{n}\rvert^{p-2}\nabla v_{n}\cdot\nabla\psi+\lambda\lvert v_{n}\rvert^{p-2}v_{n}\psi-\lvert v_{n}\rvert^{q-2}v_{n}\psi)dx=o_{n}(1)\lVert\psi\rVert_{W^{1,p}}.
        	    \end{equation}
            By the H\"{o}lder inequality,
                \begin{align*}
                	&\Big|\int_{\mathbb{R}^N}(\lvert\nabla u_{n}\rvert^{p-2}\nabla u_{n}-\lvert\nabla v_{n}\rvert^{p-2}\nabla v_{n}-\lvert\nabla u\rvert^{p-2}\nabla u)\cdot\nabla\psi dx\Big|\\
                	\leqslant&\Big(\int_{\mathbb{R}^N}\big|\lvert\nabla u_{n}\rvert^{p-2}\nabla u_{n}-\lvert v_{n}\rvert^{p-2}\nabla v_{n}-\lvert\nabla u\rvert^{p-2}\nabla u\big|^{\frac{p}{p-1}}dx\Big)^{\frac{p}{p-1}}\lVert\nabla\psi\rVert_{p}\\
                	\leqslant&\Big(\int_{\mathbb{R}^N}\big|\lvert\nabla u_{n}\rvert^{p-2}\nabla u_{n}-\lvert v_{n}\rvert^{p-2}\nabla v_{n}-\lvert\nabla u\rvert^{p-2}\nabla u\big|^{\frac{p}{p-1}}dx\Big)^{\frac{p}{p-1}}\lVert\psi\rVert_{W^{1,p}}.
                \end{align*}
            From \cite[Lemma 3.2]{mcwm}, we know
                \[\int_{\mathbb{R}^N}\big|\lvert\nabla u_{n}\rvert^{p-2}\nabla u_{n}-\lvert v_{n}\rvert^{p-2}\nabla v_{n}-\lvert\nabla u\rvert^{p-2}\nabla u\big|^{\frac{p}{p-1}}dx=o_{n}(1),\]
            which implies
                \begin{align}\label{BLnabla}
                	&\int_{\mathbb{R}^N}\lvert\nabla v_{n}\rvert^{p-2}\nabla v_{n}\cdot\nabla\psi dx\nonumber\\
                	=&\int_{\mathbb{R}^N}\lvert\nabla u_{n}\rvert^{p-2}\nabla u_{n}\cdot\nabla\psi dx-\int_{\mathbb{R}^N}\lvert\nabla u\rvert^{p-2}\nabla u\cdot\nabla\psi dx+o_{n}(1)\lVert\psi\rVert_{W^{1,p}}.
                \end{align}
            Similarly, we have
                \begin{equation}\label{BLp}
                	\int_{\mathbb{R}^N}\lvert v_{n}\rvert^{p-2}v_{n}\psi dx=\int_{\mathbb{R}^N}\lvert u_{n}\rvert^{p-2}u_{n}\psi dx-\int_{\mathbb{R}^N}\lvert u\rvert^{p-2}u\psi dx+o_{n}(1)\lVert\psi\rVert_{W^{1,p}}.
                \end{equation}
            and
                \begin{equation}\label{BLq}
                	\int_{\mathbb{R}^N}\lvert v_{n}\rvert^{q-2}v_{n}\psi dx=\int_{\mathbb{R}^N}\lvert u_{n}\rvert^{q-2}u_{n}\psi dx-\int_{\mathbb{R}^N}\lvert u\rvert^{q-2}u\psi dx+o_{n}(1)\lVert\psi\rVert_{W^{1,p}}.
                \end{equation}
            Now, using (\ref{VunptendVup}), (\ref{BLnabla}), (\ref{BLp}), (\ref{BLq}), Remark \ref{usolution} and the fact that $\{u_{n}\}$ is a PS sequence for $E_{\lambda}$, we obtain (\ref{Einflamdervn}).
        \end{proof}

        Now, we state a splitting Lemma for $E_{\lambda}$. The idea of this proof comes from \cite[Lemma 3.1]{bvcg} and \cite[Lemma 2.3]{mrrgvg}.

        \begin{lemma}\label{splittig}
        	Let $N\geqslant 1$, $p>1$, and assume that
        	
        	 $(i)$ $N>p$: $V\in L^{N/p}(B_{1})$, and $V\in L^{\tilde{r}}(\mathbb{R}^N\backslash B_{1})$ for some $\tilde{r}\in[N/p,+\infty]$,
        	
        	 $(ii)$ $N\leqslant p$: $V\in L^{r}(B_{1})$ and $V\in L^{\tilde{r}}(\mathbb{R}^N\backslash B_{1})$ for some $r,\tilde{r}\in (1,+\infty]$,
        	
        	 $(iii)$ in case $\tilde{r}=+\infty$, $V$ further satisfies $V(x)\rightarrow 0$ as $\lvert x\rvert\rightarrow+\infty$,
        	
        	 $(iv)$ $\lambda>0$.\\
        If $\{u_{n}\}$ is a PS sequence for $E_{\lambda}$ in $W^{1,p}(\mathbb{R}^N)$ and $u_{n}\rightharpoonup u$ in $W^{1,p}(\mathbb{R}^N)$ but not strongly, then there exist an integer $k\geqslant 1$, $k$ nontrival solutions $w^1,...,w^k\in W^{1,p}(\mathbb{R}^N)$ to the equation
        	    \begin{equation}\label{limequ}
        	    	-\Delta w+\lambda w=\lvert w\rvert^{q-2}w,
        	    \end{equation}
            and $k$ sequence $\{y_{n}^j\}\subset\mathbb{R}^N$, $1\leqslant j\leqslant k$, such that $\lvert y_{n}^j\rvert\rightarrow+\infty$ as $n\rightarrow\infty$, $\lvert y_{n}^{j_{1}}-y_{n}^{j_{2}}\rvert\rightarrow+\infty$ for $j_{1}\neq j_{2}$ as $n\rightarrow\infty$, and
                \begin{equation}\label{decom}
                	u_{n}=u+\sum_{j=1}^{k}w^j(\cdot-y_{n}^j)+o_{n}(1)\quad\mbox{in}\ W^{1,p}(\mathbb{R}^N).
                \end{equation}
            Moreover, we have
                \begin{equation}\label{decomnorm}
                	\lVert u_{n}\rVert_{p}^p=\lVert u\rVert_{p}^p+\sum_{j=1}^{k}\lVert w^j\rVert_{p}^p+o_{n}(1),
                \end{equation}
            and
                \begin{equation}\label{decomener}
                	E_{\lambda}(u_{n})=E_{\lambda}(u)+\sum_{j=1}^{k}E_{\infty,\lambda}(w^j)+o_{n}(1).
                \end{equation}
        \end{lemma}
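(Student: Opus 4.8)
The plan is to argue by an iterative "bubbling" scheme in the spirit of the cited references \cite{bvcg,mrrgvg}, combined with the two preparatory lemmas already established. First I would set $v_n^0 := u_n - u$. By Lemma~\ref{vnps}, $\{v_n^0\}$ is a PS sequence for $E_{\infty,\lambda}$, and since $u_n \rightharpoonup u$ but not strongly, $\{v_n^0\}$ does not converge to $0$ in $W^{1,p}$; moreover $v_n^0 \rightharpoonup 0$. The Br\'ezis--Lieb lemma together with \eqref{VunptendVup} and \eqref{BLnabla}--\eqref{BLq} (already in the proof of Lemma~\ref{vnps}) yield $E_\lambda(u_n) = E_\lambda(u) + E_{\infty,\lambda}(v_n^0) + o_n(1)$ and $\lVert u_n\rVert_p^p = \lVert u\rVert_p^p + \lVert v_n^0\rVert_p^p + o_n(1)$, so the conservation identities \eqref{decomnorm} and \eqref{decomener} will follow once the iteration terminates.

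Next I would run the concentration step. Since $\{v_n^0\}$ is a bounded PS sequence for the translation-invariant functional $E_{\infty,\lambda}$ that does not go to zero, a vanishing-dichotomy argument (a $p$-Laplacian version of Lions' lemma) rules out vanishing: there exist $R>0$, $\beta>0$ and points $y_n^1 \in \mathbb{R}^N$ with $\int_{B_R(y_n^1)} |v_n^0|^p \,dx \ge \beta > 0$. Because $v_n^0 \rightharpoonup 0$ in $W^{1,p}(\mathbb{R}^N)$ and $v_n^0 \to 0$ in $L^p_{loc}$, necessarily $|y_n^1| \to +\infty$. Setting $w_n^1(x) := v_n^0(x + y_n^1)$, boundedness gives $w_n^1 \rightharpoonup w^1$ in $W^{1,p}$ with $w^1 \ne 0$; translation invariance of $E_{\infty,\lambda}$ shows $\{w_n^1\}$ is again a PS sequence for $E_{\infty,\lambda}$, and Lemma~\ref{convergence} applied with $V\equiv 0$ (or directly Lemma~\ref{nablad1p}) gives $\nabla w_n^1 \to \nabla w^1$ a.e., so passing to the limit in the weak formulation shows $w^1$ solves \eqref{limequ}. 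Then I define $v_n^1 := v_n^0 - w^1(\cdot - y_n^1)$; by Br\'ezis--Lieb and the same a.e.-gradient convergence, $\{v_n^1\}$ is once more a PS sequence for $E_{\infty,\lambda}$ with $v_n^1 \rightharpoonup 0$, and $E_{\infty,\lambda}(v_n^0) = E_{\infty,\lambda}(w_n^1) + E_{\infty,\lambda}(v_n^1) + o_n(1) \to E_{\infty,\lambda}(w^1) + E_{\infty,\lambda}(v_n^1) + o_n(1)$, with the analogous $L^p$-norm splitting.

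Then I iterate: if $v_n^1 \to 0$ in $W^{1,p}$ we stop with $k=1$; otherwise repeat the concentration step to produce $y_n^2$, $w^2$, and $v_n^2$, where the new translations satisfy $|y_n^2|\to\infty$ and $|y_n^2 - y_n^1|\to\infty$ (the latter because otherwise $v_n^1(\cdot + y_n^2)$ would concentrate where $w_n^1$ did, forcing the mass at $y_n^1$ to have been counted already; this is the standard "separation of bubbles" argument). The crucial point that makes the process terminate is a uniform lower bound on the energy of nontrivial solutions of \eqref{limequ}: every nonzero $w \in W^{1,p}$ solving $-\Delta_p w + \lambda w = |w|^{q-2}w$ satisfies $E_{\infty,\lambda}(w) \ge m_\lambda > 0$, obtained from the Nehari/Pohozaev identities and the Gagliardo--Nirenberg inequality since $q$ is $L^p$-supercritical. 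Because $\{E_{\infty,\lambda}(v_n^j)\}_j$ is nonincreasing in $j$ up to $o_n(1)$ and each extracted bubble removes at least $m_\lambda$ of energy while the total is bounded, after finitely many steps — say $k$ — we must have $v_n^k \to 0$ in $W^{1,p}$. Unwinding the telescoping identities yields \eqref{decom}, \eqref{decomnorm} and \eqref{decomener}. The main obstacle I anticipate is the $p$-Laplacian-specific technical point: justifying the Br\'ezis--Lieb–type splitting for the gradient term $\int |\nabla u_n|^{p-2}\nabla u_n\cdot\nabla\psi$ and for $E_{\infty,\lambda}$ itself at each stage of the iteration (so that the residual really is a PS sequence), together with the a.e.\ convergence of gradients of the translated sequences; all of these rely on \cite[Lemma~3.2]{mcwm}, Lemma~\ref{nablad1p} and Lemma~\ref{convergence}, but keeping track of them uniformly along the induction is the delicate part, whereas the purely variational bubble-counting is routine.
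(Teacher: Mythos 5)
Your proposal follows essentially the same route as the paper's proof: subtract the weak limit, use Lemma~\ref{vnps} to see that the remainder is a PS sequence for $E_{\infty,\lambda}$, extract bubbles by a Lions-type non-vanishing argument (the paper implements this with a unit-hypercube decomposition and uses the identity $\lVert\nabla u_{1,n}\rVert_p^p+\lambda\lVert u_{1,n}\rVert_p^p=\lVert u_{1,n}\rVert_q^q+o_n(1)$ together with $\lambda>0$ to rule out vanishing --- the one point your sketch leaves implicit), and iterate with Br\'ezis--Lieb splittings. The only cosmetic difference is the termination criterion: you stop because each bubble carries energy at least $m_\lambda>0$, whereas the paper stops because each bubble has $W^{1,p}$-norm bounded below (Lemma~\ref{w}); both are equivalent via the Nehari/Pohozaev identities.
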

        \begin{proof}
        	Let $u_{1,n}=u_{n}-u$. Then $u_{1,n}\rightharpoonup 0$ in $W^{1,p}(\mathbb{R}^N)$, $u_{n}\rightarrow u$ in $L_{loc}^{p}(\mathbb{R}^N)$, $L_{loc}^q(\mathbb{R}^N)$ and a.e. in $\mathbb{R}^N$.
        	Similar to the proof of Lemma \ref{vnps}, we can prove that
                \begin{equation}\label{Vu1npcon0}
                	\int_{\mathbb{R}^N}V(x)\lvert u_{1,n}\rvert^pdx\rightarrow 0,\quad\mbox{as}\ n\rightarrow\infty.
                \end{equation}
            Since $u_{n}\rightharpoonup u$ in $W^{1,p}(\mathbb{R}^N)$ but not strongly, there is
                \[\liminf_{n\rightarrow\infty}\lVert u_{1,n}\rVert^p>0.\]
            By Lemma \ref{vnps}, we know $\{u_{1,n}\}$ is a PS sequence for $E_{\infty,\lambda}$, thus
                \[\lVert\nabla u_{1,n}\rVert_{p}^p+\lambda\lVert u_{1,n}\rVert_{p}^p=\lVert u_{1,n}\rVert_{q}^q+o_{n}(1),\]
            which implies
                \[\liminf_{n\rightarrow\infty}\lVert u_{1,n}\rVert_{q}^q>0,\]
            since $\lambda>0$.

            Let us decompose $\mathbb{R}^N$ into $N$-dimensional unit hypercubes $Q_{i}$ and set
                \[l_{n}=\sup_{i\in\mathbb{N}_{+}}\lVert u_{1,n}\rVert_{L^q(Q_{i})}.\]
            It is not difficult to conclude that $l_{n}$ can be attained by some $i_{n}\in\mathbb{N}_{+}$. We claim that
                \[\liminf_{n\rightarrow\infty}l_{n}>0.\]
            Indeed,
                \begin{align*}
                	\lVert u_{1,n}\rVert_{q}^q=\sum_{i=1}^{\infty}\lVert u_{1,n}\rVert_{L^q(Q_{i})}^q
                	\leqslant l_{n}^{q-p}\sum_{i=1}^{\infty}\lVert u_{1,n}\rVert_{L^q(Q_{i})}^p\leqslant Cl_{n}^{q-p}\sum_{i=1}^{n}\lVert\nabla u_{1,n}\rVert_{L^p(Q_{i})}^p\leqslant Cl_{n}^{q-p},
                \end{align*}
            this prove the claim.

            Let $y_{n}^1$ be the center of $Q_{i_{n}}$. It is not difficult to observe that $\lvert y_{n}^1\rvert\rightarrow+\infty$, since $u_{1,n}\rightarrow 0$ in $L_{loc}^q(\mathbb{R}^N)$. Set
                \[v_{1,n}:=u_{1,n}(\cdot+y_{n}^1),\]
            then $\{v_{1,n}\}$ is a PS sequence for $E_{\infty,\lambda}$ and there exists $w^1\in W^{1,p}(\mathbb{R}^N)\backslash\{0\}$ such that $v_{1,n}\rightharpoonup w^1$ in $W^{1,p}(\mathbb{R}^N)$. By weak convergence, we know $w^1$ satisfies (\ref{limequ}). Moreover, by (\ref{Vcon0}), Br\'{e}zis-Lieb Lemma\cite[Theorem 2]{bhle} and Lemma \ref{convergence}, we have
                \[u_{n}=u+u_{1,n}=u+v_{1,n}(\cdot-y_{n}^1)=u+w^1(\cdot-y_{n}^1)+[v_{1,n}(\cdot-y_{n}^1)-w^1(\cdot-y_{n}^1)],\]
                \[\lVert u_{n}\rVert^p=\lVert u\rVert^p+\lVert w^1\rVert^p+\lVert v_{1,n}-w^1\rVert^p+o_{n}(1),\]
            and
                \[E_{\lambda}(u_{n})=E_{\lambda}(u)+F_{\infty,\lambda}(w^1)+F_{\infty,\lambda}(v_{1,n}-w^1)+o_{n}(1).\]

            Now, we can set
                \[u_{2,n}=v_{1,n}(\cdot-y_{n}^1)-w^1(\cdot-y_{n}^1),\]
            and iterate the above procedure. To complete the proof, we just need to prove that the iteration will be ended in finite steps. Suppose by contraction that the iteration will not be ended, then we have
                \[\lVert u_{n}\rVert^p\geqslant\sum_{j=1}^{\infty}\lVert w^j\rVert^p.\]
            In Lemma \ref{w}, we will claim that there exists a constant $C>0$ depending on $N$, $p$, $q$ and $\lambda$, such that $\lVert w^j\rVert_{W^{1,p}}\geqslant C$ which implies $\lVert u_{n}\rVert=+\infty$, which is an absurd.
        \end{proof}

        Finally, we give some properties of $w$ which satisfies
            \[-\Delta_{p}w+\lambda\lvert w\rvert^{p-2}w=\lvert w\rvert^{q-2}w\]
        for some $\lambda>0$. Define
           \begin{equation}\label{Z}
           	    Z_{a}:=\{w\in S_{a}:\exists\lambda>0,\mbox{s.t.}-\Delta_{p}w+\lambda\lvert w\rvert^{p-2}w=\lvert w\rvert^{q-2}w\},
           \end{equation}
        and
            \begin{equation}\label{ma}
            	m_{a}:=\inf_{w\in Z_{a}}E_{\infty}(w).
            \end{equation}

                \begin{lemma}\label{w}
        	Let $w\in W^{1,p}(\mathbb{R}^N)$ be a non-trivial solution of
        	\[-\Delta_{p}w+\lambda\lvert w\rvert^{p-2}w=\lvert w\rvert^{q-2}w\]
        	for some $\lambda>0$. Then there exists a constant $C$ depending on $N$, $p$, $q$, and $\lambda$ such that
        	\[\lVert w\rVert_{W^{1,p}}\geqslant C.\]
        \end{lemma}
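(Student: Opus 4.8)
The plan is to use $w$ itself as a test function in the weak formulation of the equation and then compare the resulting identity with the Sobolev embedding $W^{1,p}(\mathbb{R}^N)\hookrightarrow L^q(\mathbb{R}^N)$. Since $q\in(p+\frac{p^2}{N},p^*)$, in every dimensional regime allowed by the hypotheses (i.e.\ $N>p$, $N=p$, or $N<p$) the space $W^{1,p}(\mathbb{R}^N)$ embeds continuously into $L^q(\mathbb{R}^N)$; let $S=S(N,p,q)>0$ be a constant with $\lVert v\rVert_{q}\leqslant S\lVert v\rVert_{W^{1,p}}$ for all $v\in W^{1,p}(\mathbb{R}^N)$. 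In particular $w\in L^q(\mathbb{R}^N)$, so testing the equation against $w$ is legitimate and yields
\[
\lVert\nabla w\rVert_{p}^p+\lambda\lVert w\rVert_{p}^p=\lVert w\rVert_{q}^q .
\]

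Next I would estimate the two sides. On the left, $\lVert\nabla w\rVert_{p}^p+\lambda\lVert w\rVert_{p}^p\geqslant\min\{1,\lambda\}\,\lVert w\rVert_{W^{1,p}}^p$ (here it is essential that $\lambda>0$); on the right, the Sobolev inequality gives $\lVert w\rVert_{q}^q\leqslant S^q\lVert w\rVert_{W^{1,p}}^q$. Combining these,
\[
\min\{1,\lambda\}\,\lVert w\rVert_{W^{1,p}}^p\leqslant S^q\lVert w\rVert_{W^{1,p}}^q .
\]
Since $w$ is non-trivial, $\lVert w\rVert_{W^{1,p}}>0$, so dividing by $\lVert w\rVert_{W^{1,p}}^p$ and using $q>p$ gives
\[
\lVert w\rVert_{W^{1,p}}\geqslant\Big(\frac{\min\{1,\lambda\}}{S^q}\Big)^{\frac{1}{q-p}}=:C,
\]
which is the asserted lower bound, depending only on $N$, $p$, $q$ and $\lambda$.

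There is no serious obstacle here; the only points worth a word of care are that the Sobolev embedding into $L^q$ genuinely holds on the whole space $\mathbb{R}^N$ for the admissible range of $q$ (so that $S$ is finite and depends only on $N,p,q$), and that "non-trivial" is precisely what guarantees $\lVert w\rVert_{W^{1,p}}\neq 0$, making the final division valid. If one prefers, the Sobolev inequality can be replaced by the Gagliardo--Nirenberg inequality recalled in the introduction, interpolating $\lVert w\rVert_q$ between $\lVert\nabla w\rVert_p$ and $\lVert w\rVert_p$; this leads to the same kind of estimate and again produces a constant of the form $C(N,p,q,\lambda)$.
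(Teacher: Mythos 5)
Your proof is correct, but it takes a different route from the paper's. The paper combines the Pohozaev identity $\lVert\nabla w\rVert_p^p=\frac{N(q-p)}{pq}\lVert w\rVert_q^q$ with the identity obtained by testing against $w$, solves for $\lambda\lVert w\rVert_p^p$ in terms of $\lVert\nabla w\rVert_p^p$, and then applies the Gagliardo--Nirenberg inequality to get $\lVert\nabla w\rVert_p^p\leqslant C\lVert\nabla w\rVert_p^q$, i.e.\ a lower bound on the gradient norm alone. You use only the Nehari-type identity together with the continuous embedding $W^{1,p}(\mathbb{R}^N)\hookrightarrow L^q(\mathbb{R}^N)$ (valid here for all three regimes $N>p$, $N=p$, $N<p$ since $p<q<p^*$), which immediately gives $\min\{1,\lambda\}\lVert w\rVert_{W^{1,p}}^p\leqslant S^q\lVert w\rVert_{W^{1,p}}^q$ and hence the bound. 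Your argument is more elementary: it avoids the Pohozaev identity, whose rigorous justification for $p$-Laplacian solutions requires some regularity discussion, and it only needs $q>p$ rather than the full $L^p$-supercritical restriction. What the paper's computation buys in exchange is a lower bound on $\lVert\nabla w\rVert_p$ itself together with the explicit proportionality $\lambda\lVert w\rVert_p^p=\frac{Np-(N-p)q}{N(q-p)}\lVert\nabla w\rVert_p^p$, which is the same bookkeeping used in Lemmas \ref{mageqz} and \ref{made}; for the sole purpose of terminating the iteration in the splitting lemma, either version suffices.
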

        \begin{proof}
        	By the Pohozaev identity, we have
        	\[\lVert\nabla w\rVert_{p}^p=\frac{N(q-p)}{pq}\lVert w\rVert_{q}^q,\]
        	which together with
        	\[\lVert\nabla w\rVert_{p}^p+\lambda\lVert w\rVert_{p}^p=\lVert w\rVert_{q}^q\]
        	implies
        	\[\lambda\lVert w\rVert_{p}^p=\frac{Np-(N-p)q}{N(q-p)}\lVert\nabla w\rVert_{p}^p.\]
        	Now, by Gagliardo-Nirenberg inequality, we have
        	\[\frac{N(q-p)}{pq}\lVert\nabla w\rVert_{p}^p=\lVert w\rVert_{q}^q\leqslant C_{N,p,q}^q\lVert\nabla w\rVert_{p}^{\frac{N(q-p)}{p}}\lVert w\rVert_{p}^{q-\frac{N(q-p)}{p}}=C\lVert\nabla w\rVert_{p}^{q}.\]
        	Therefore, $\lVert w\rVert_{W^{1,p}}\geqslant C=C(N,p,q,\lambda)$, since $q>p$.
        \end{proof}

        \begin{lemma}\label{mageqz}
        	We have $m_{a}>0$ and $m_{a}$ can be achieved by some $w\in Z_{a}$.
        \end{lemma}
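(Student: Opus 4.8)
The plan is to prove the two assertions with the help of the Pohozaev identity, an exact scaling law for $m_b$, and a concentration--compactness argument. First I would observe that $Z_a\neq\varnothing$: it is classical that $-\Delta_p W+|W|^{p-2}W=|W|^{q-2}W$ admits a nontrivial solution in $W^{1,p}(\mathbb{R}^N)$ for $p<q<p^*$, and the scaling $\diamond$ introduced below carries it into $Z_a$ for every $a>0$, so $m_a\in[0,+\infty)$ is well defined. Now fix $w\in Z_a$ with associated $\lambda>0$. By the Pohozaev identity used in the proof of Lemma \ref{w}, $\|\nabla w\|_p^p=\frac{N(q-p)}{pq}\|w\|_q^q$, hence
\[
E_\infty(w)=\Big(\frac1p-\frac{p}{N(q-p)}\Big)\|\nabla w\|_p^p=:c_0\|\nabla w\|_p^p,\qquad c_0>0,
\]
since $q>p+\frac{p^2}{N}$. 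Feeding this back into the Gagliardo--Nirenberg inequality with $\|w\|_p=a$ kept explicit, and using $p<\frac{N(q-p)}{p}<q$ (which is exactly $p+\frac{p^2}{N}<q<p^*$), one obtains a lower bound $\|\nabla w\|_p\geq C(N,p,q,a)>0$ independent of $w$, and therefore $m_a\geq c_0\,C(N,p,q,a)^p>0$.

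\emph{Scaling law.} For a solution $w$ of $-\Delta_p w+\lambda|w|^{p-2}w=|w|^{q-2}w$ with $\lambda>0$ and $t>0$, set $(t\diamond w)(x):=t\,w(t^{\frac{q-p}{p}}x)$. A direct computation shows $t\diamond w$ solves the same equation with $\lambda$ replaced by $t^{q-p}\lambda>0$, and
\[
\|t\diamond w\|_p^p=t^{\,p-\frac{N(q-p)}{p}}\|w\|_p^p,\qquad E_\infty(t\diamond w)=t^{\,q-\frac{N(q-p)}{p}}E_\infty(w),
\]
the second because $E_\infty=c_0\|\nabla\cdot\|_p^p$ on solutions. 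Since $p-\frac{N(q-p)}{p}<0<q-\frac{N(q-p)}{p}$, for every $b>0$ there is a unique $t_{a,b}=(a/b)^{p/(N(q-p)/p-p)}>0$ with $\|t_{a,b}\diamond w\|_p=b$ whenever $\|w\|_p=a$; then $w\mapsto t_{a,b}\diamond w$ is a bijection of $Z_a$ onto $Z_b$, so $m_b=t_{a,b}^{\,q-N(q-p)/p}m_a$. As the exponent is positive and $t_{a,b}>1\iff b<a$, the map $b\mapsto m_b$ is finite and strictly decreasing on $(0,+\infty)$.

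\emph{Attainment.} Take $\{w_n\}\subset Z_a$ with $E_\infty(w_n)\to m_a$, solving the equation with $\lambda_n>0$. Then $\{w_n\}$ is bounded in $W^{1,p}(\mathbb{R}^N)$ (because $\|\nabla w_n\|_p^p=E_\infty(w_n)/c_0$ and $\|w_n\|_p=a$), and the identities $\|w_n\|_q^q=\frac{pq}{N(q-p)}\|\nabla w_n\|_p^p$ and $\lambda_na^p=\big(\frac{pq}{N(q-p)}-1\big)\|\nabla w_n\|_p^p$ (from the Pohozaev and Nehari identities) yield $\liminf_n\|w_n\|_q>0$ and $\lambda_n\to\lambda_0>0$ along a subsequence. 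Since $\|w_n\|_q\not\to0$, Lions' lemma rules out vanishing, so after a translation we may assume $\tilde w_n:=w_n(\cdot+y_n)$ satisfies $\int_{B_1}|\tilde w_n|^q\geq\delta>0$; by translation invariance $\tilde w_n\in Z_a$ still, solves the equation with $\lambda_n$, and $E_\infty(\tilde w_n)\to m_a$. Along a subsequence $\tilde w_n\rightharpoonup w$ in $W^{1,p}(\mathbb{R}^N)$ and $\tilde w_n\to w$ in $L^q(B_1)$ (as $q<p^*$), so $w\neq0$; moreover $\{\tilde w_n\}$ is a Palais--Smale sequence for $E_{\lambda_0}$ with $V\equiv0$, the discrepancy $(\lambda_0-\lambda_n)\int|\tilde w_n|^{p-2}\tilde w_n\psi$ being $o_n(1)\|\psi\|_{W^{1,p}}$. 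By Lemma \ref{convergence} and weak convergence (cf. Remark \ref{usolution}), $w$ solves $-\Delta_p w+\lambda_0|w|^{p-2}w=|w|^{q-2}w$; thus $w\in Z_{b_0}$ with $b_0:=\|w\|_p\in(0,a]$, whence $E_\infty(w)=c_0\|\nabla w\|_p^p\geq m_{b_0}$. On the other hand, by weak lower semicontinuity $E_\infty(w)=c_0\|\nabla w\|_p^p\leq c_0\liminf_n\|\nabla\tilde w_n\|_p^p=m_a$, so $m_{b_0}\leq m_a$; by the scaling law this forces $b_0=a$ and $E_\infty(w)=m_a$. Finally, $\|\tilde w_n\|_p=a=\|w\|_p$ and $\|\nabla\tilde w_n\|_p^p\to m_a/c_0=\|\nabla w\|_p^p$, so by the uniform convexity of $L^p$ ($1<p<\infty$) the weak convergences upgrade to $\tilde w_n\to w$ in $W^{1,p}(\mathbb{R}^N)$; hence $w\in Z_a$ and $E_\infty(w)=m_a$, i.e.\ $m_a$ is achieved.

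\emph{Main obstacle.} The crux is the compactness in the last step: both $Z_a$ and $E_\infty$ are translation invariant, so the weak limit of a minimizing sequence could be trivial or could lose mass. Triviality is excluded by Lions' lemma thanks to the uniform lower bound on $\|w_n\|_q$ (itself a consequence of the Pohozaev identity and $m_a>0$), and loss of mass is excluded by the strict monotonicity of $b\mapsto m_b$, which is precisely the reason the exact scaling $\diamond$ is needed. A subtlety is that $\{\tilde w_n\}$ is not a priori weakly convergent to a solution; what rescues the argument is that, up to the negligible term coming from $\lambda_n\to\lambda_0$, it is a Palais--Smale sequence for the fixed-frequency functional $E_{\lambda_0}$, which is exactly the setting of Lemma \ref{convergence}.
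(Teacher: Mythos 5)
Your proof is correct, and for the attainment part it takes a genuinely different route from the paper. The positivity of $m_a$ is handled identically in both arguments (Pohozaev identity plus Gagliardo--Nirenberg gives a uniform lower bound on $\lVert\nabla w\rVert_p$ over $Z_a$, and $E_\infty=\frac{N(q-p)-p^2}{Np(q-p)}\lVert\nabla\cdot\rVert_p^p$ on $Z_a$). For attainment, the paper simply observes that on $Z_a$ minimizing $E_\infty$ is equivalent to achieving equality in the Gagliardo--Nirenberg chain, and invokes Agueh's characterization of the sharp constant and its extremals (which, suitably rescaled, solve $-\Delta_p\psi+\lambda\lvert\psi\rvert^{p-2}\psi=\lvert\psi\rvert^{q-2}\psi$ and lie in $S_a$) --- two lines, but resting on an external result. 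You instead run a self-contained concentration--compactness argument: vanishing of a minimizing sequence is excluded by the Pohozaev lower bound on $\lVert w_n\rVert_q$ via Lions' lemma, dichotomy (loss of mass in the weak limit) is excluded by the strict monotonicity $b\mapsto m_b$ coming from your exact scaling $t\diamond w$ (which reproduces the relation $a^{p\theta}m_a=b^{p\theta}m_b$ of Lemma \ref{made}), and the weak limit is identified as a solution via the a.e.\ gradient convergence of Lemma \ref{convergence} applied with $V\equiv 0$ to the Palais--Smale sequence for $E_{\infty,\lambda_0}$. Your version is longer but avoids relying on the structure of the Gagliardo--Nirenberg optimizers; the paper's version is shorter but imports that structure from \cite{am}. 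Both are valid; the only cosmetic remark is that your final Radon--Riesz upgrade to strong convergence is not actually needed, since once $b_0=a$ you already have $w\in Z_a$ with $E_\infty(w)=m_a$.
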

        \begin{proof}
        	For every $w\in Z_{a}$, by the Pohozaev identity and Gagliardo-Nirenberg inequality, we have
        	    \begin{equation}\label{GNw}
        	    	\lVert\nabla w\rVert_{p}^p=\frac{N(q-p)}{pq}\lVert w\rVert_{q}^q\leqslant\frac{N(q-p)}{pq}C_{N,p,q}^qa^{q-\frac{N(q-p)}{p}}\lVert\nabla w\rVert_{p}^{\frac{N(q-p)}{p}},
        	    \end{equation}
            which implies $\lVert\nabla w\rVert_{p}\geqslant C=C(N,p,q,a)$, since $N(q-p)/p>p$. Therefore,
                \[E_{\infty}(w)=\frac{1}{p}\lVert\nabla w\rVert_{p}^p-\frac{1}{q}\lVert w\rVert_{q}^q=\Big(\frac{1}{p}-\frac{1}{q\gamma_{q}}\Big)\lVert\nabla w\rVert_{p}^p\geqslant C,\]
            which implies $m_{a}>0$. By \cite[Page 2]{am}, it is not difficult to know the inequality in (\ref{GNw}) can become an equation by some $\psi\in Z_{a}$ and $E_{\infty}(\psi)=m_{a}$.
        \end{proof}

        \begin{lemma}\label{made}
        	$m_{a}$ is decreasing for $a\in(0,+\infty)$ and
        	    \[a^{p\theta}m_{a}=b^{p\theta}m_{b}\quad\forall a,b>0,\]
        	where
        	    \[\theta=\frac{Np-q(N-p)}{N(q-p)-p^2}.\]
        \end{lemma}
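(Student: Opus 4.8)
The plan is to exploit a scaling invariance of the limit equation. Given $w\in Z_{a}$ solving $-\Delta_{p}w+\lambda\lvert w\rvert^{p-2}w=\lvert w\rvert^{q-2}w$ with $\lambda>0$, I would set, for a parameter $s>0$ to be fixed later,
\[\tilde{w}(x):=s^{\frac{p}{q-p}}w(sx).\]
A direct computation shows that $\tilde{w}$ solves $-\Delta_{p}\tilde{w}+\tilde{\lambda}\lvert\tilde{w}\rvert^{p-2}\tilde{w}=\lvert\tilde{w}\rvert^{q-2}\tilde{w}$ with $\tilde{\lambda}=s^{p}\lambda>0$; the exponent $\frac{p}{q-p}$ is chosen precisely so that the $\lvert\cdot\rvert^{q-2}(\cdot)$ terms match, and then the $\lvert\cdot\rvert^{p-2}(\cdot)$ terms force $\tilde{\lambda}=s^{p}\lambda$. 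Moreover $\lVert\tilde{w}\rVert_{p}^{p}=s^{\frac{p^{2}}{q-p}-N}\lVert w\rVert_{p}^{p}=s^{\frac{p^{2}-N(q-p)}{q-p}}a^{p}$. Since $q>p+\frac{p^{2}}{N}$ gives $p^{2}-N(q-p)<0$, the equation $s^{\frac{p^{2}-N(q-p)}{q-p}}=(b/a)^{p}$ has a unique solution $s=s(a,b)>0$, and for that choice $\tilde{w}\in S_{b}$, hence $\tilde{w}\in Z_{b}$. Performing the same construction with the roles of $a$ and $b$ interchanged gives the inverse map, so $w\mapsto\tilde{w}$ is a bijection from $Z_{a}$ onto $Z_{b}$.

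Next I would track the energy. Using $\big(s^{\frac{p}{q-p}}\big)^{q-p}=s^{p}$ one finds $\lVert\nabla\tilde{w}\rVert_{p}^{p}=s^{\frac{p^{2}}{q-p}+p-N}\lVert\nabla w\rVert_{p}^{p}$ and $\lVert\tilde{w}\rVert_{q}^{q}=s^{\frac{pq}{q-p}-N}\lVert w\rVert_{q}^{q}=s^{\frac{p^{2}}{q-p}+p-N}\lVert w\rVert_{q}^{q}$, so both terms of $E_{\infty}$ scale by the same factor and
\[E_{\infty}(\tilde{w})=s^{\frac{p^{2}}{q-p}+p-N}E_{\infty}(w)=s^{\frac{Np-q(N-p)}{q-p}}E_{\infty}(w).\]
Since
\[\frac{Np-q(N-p)}{q-p}\Big/\frac{p^{2}-N(q-p)}{q-p}=\frac{Np-q(N-p)}{p^{2}-N(q-p)}=-\theta,\]
raising $s^{\frac{p^{2}-N(q-p)}{q-p}}=(b/a)^{p}$ to the power $-\theta$ gives $E_{\infty}(\tilde{w})=(b/a)^{-p\theta}E_{\infty}(w)$, i.e. $b^{p\theta}E_{\infty}(\tilde{w})=a^{p\theta}E_{\infty}(w)$. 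Taking the infimum over the bijection $Z_{a}\leftrightarrow Z_{b}$ then yields $a^{p\theta}m_{a}=b^{p\theta}m_{b}$.

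For the monotonicity I would check that $\theta>0$: the denominator $N(q-p)-p^{2}>0$ because $q>p+\frac{p^{2}}{N}$, while the numerator $Np-q(N-p)>0$ because $q<p^{*}=\frac{Np}{N-p}$ when $N>p$ and trivially when $N\leqslant p$. Since $m_{b}>0$ by Lemma \ref{mageqz}, the identity $m_{a}=(b/a)^{p\theta}m_{b}$ shows that $a<b$ forces $m_{a}>m_{b}$, so $m_{a}$ is (strictly) decreasing on $(0,+\infty)$. The only points requiring care are the exponent bookkeeping, the verification that $\tilde{\lambda}>0$, and that the rescaling is genuinely invertible on $Z_{a}$; there is no analytic obstacle here, as the compactness needed to make sense of $m_{a}$ was already settled in Lemma \ref{mageqz}.
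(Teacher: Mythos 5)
Your proof is correct and is essentially the paper's argument: the rescaling $\tilde w(x)=s^{p/(q-p)}w(sx)$ is exactly the map $w\mapsto\alpha w(\beta\cdot)$ used there (with $\alpha=\beta^{p/(q-p)}$), and the exponent bookkeeping, the positivity of $\theta$, and the monotonicity conclusion all check out. The only (harmless) differences are that you observe directly that both terms of $E_{\infty}$ scale by the same factor instead of invoking the Pohozaev identity, and you take infima over the bijection $Z_{a}\leftrightarrow Z_{b}$ rather than working with an attained minimizer.
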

        \begin{proof}
        	By Lemma \ref{mageqz}, there exists $w_{b}\in Z_{b}$ such that $E_{\infty}(w_{b})=m_{b}$. Moreover, there exists $\lambda_{b}>0$ such that
        	    \[-\Delta_{p}w_{b}+\lambda_{b}\lvert w_{b}\rvert^{p-2}w_{b}=\lvert w_{b}\rvert^{q-2}w_{b}.\]
        	Therefore, by Pohozaev identity, we have
        	    \begin{equation}\label{wbmb}
        	    	E_{\infty}(w_{b})=\frac{1}{p}\lVert\nabla w_{b}\rVert_{p}^p-\frac{1}{q}\lVert w_{b}\rVert_{q}^q=\frac{N(q-p)-p^2}{Np(q-p)}\lVert\nabla w_{b}\rVert_{p}^p=m_{b}.
        	    \end{equation}

            Let $w=\alpha w_{b}(\beta\cdot)$, where
                \[\alpha=\Big(\frac{b}{a}\Big)^{\frac{p^2}{N(q-p)-p^2}}\quad\mbox{and}\quad\beta=\Big(\frac{b}{a}\Big)^{\frac{p(q-p)}{N(q-p)-p^2}}.\]
            Direct calculations show that $w\in Z_{a}$ and
                \[-\Delta_{p}w+\lambda_{b}\beta^p\lvert w\rvert^{p-2}w=\lvert w\rvert^{q-2}w.\]
            Thus, $E_{\infty}(w)\geqslant m_{a}$. By (\ref{wbmb}), we have
                \begin{align*}
                	E_{\infty}(w)&=\frac{1}{p}\lVert\nabla w\rVert_{p}^p-\frac{1}{q}\lVert w\rVert_{q}^q=\frac{N(q-p)-p^2}{Np(q-p)}\lVert\nabla w\rVert_{p}^p\\
                	&=\frac{N(q-p)-p^2}{Np(q-p)}\alpha^p\beta^{p-N}\lVert\nabla w_{b}\rVert_{p}^p=\Big(\frac{b}{a}\Big)^{p\theta}m_{b},
                \end{align*}
            which implies
                \[b^{p\theta}m_{b}\geqslant a^{p\theta}m_{a}.\]
            Similarly, we have
                \[a^{p\theta}m_{a}\geqslant b^{p\theta}m_{b}.\]
        \end{proof}

\section{Proof of Theorem \ref{th1}}
    In this section, we will prove Theorem \ref{th1}. For some fixed $\delta\in(0,1)$, we give following assumptions on $V$ and $W$.
        \begin{equation}\label{Vcon1}
        	\lVert V\rVert_{N/p}\leqslant(1-\delta)S,
        \end{equation}
        \begin{equation}\label{Vcon2}
        	N\lvert q-2p\rvert S^{-1}\lVert V\rVert_{N/p}+p^2S^{-\frac{p-1}{p}}\lVert W\rVert_{N/(p-1)}<B,
        \end{equation}
        \begin{equation}\label{Vcon3}
        	\big[AN\lvert q-2p\rvert+(N-p)D\big]S^{-1}\lVert V\rVert_{N/p}+p(pA+D)S^{-\frac{p-1}{p}}\lVert W\rVert_{N/(p-1)}<AB,
        \end{equation}
    and
        \begin{equation}\label{Vcon4}
        	N\lvert q-2p\rvert S^{-1}\lVert V\rVert_{N/p}+p^2S^{-\frac{p-1}{p}}\lVert W\rVert_{N/(p-1)}\leqslant N(q-p)-p^2
        \end{equation}
    where
        \[A=Np-(N-p)q,\quad B=N(q-p)-p^2\quad\mbox{and}\quad D=N(q-p)^2.\]
    Obviously, under the above assumptions, (\ref{VNgeqp}) can be established. Firstly, we prove that under the above assumptions, the functional $E$ has a mountain pass geometry.

    \begin{lemma}\label{Fgeq}
    	For every $u\in S_{a}$,
    	    \[E(u)\geqslant\frac{\delta}{p}\lVert\nabla u\rVert_{p}^p-\frac{1}{q}C_{N,p,q}^qa^{q-\frac{N(q-p)}{p}}\lVert\nabla u\rVert_{p}^{\frac{N(q-p)}{p}}.\]
    \end{lemma}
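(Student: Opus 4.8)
The plan is to start from the definition of $E(u)$ and bound the two ``bad'' terms — the nonlinearity $\frac{1}{q}\lVert u\rVert_q^q$ and the potential term $\frac{1}{p}\int_{\mathbb{R}^N}V(x)\lvert u\rvert^p\,dx$ — separately, using the Gagliardo–Nirenberg inequality for the first and a Hölder/Sobolev argument for the second. Writing
\[
E(u)=\frac1p\lVert\nabla u\rVert_p^p-\frac1q\lVert u\rVert_q^q-\frac1p\int_{\mathbb{R}^N}V(x)\lvert u\rvert^p\,dx,
\]
the Gagliardo–Nirenberg inequality recalled in the introduction gives directly
\[
\frac1q\lVert u\rVert_q^q\leqslant\frac1q C_{N,p,q}^q\lVert\nabla u\rVert_p^{\frac{N(q-p)}{p}}\lVert u\rVert_p^{q-\frac{N(q-p)}{p}}
=\frac1q C_{N,p,q}^q a^{q-\frac{N(q-p)}{p}}\lVert\nabla u\rVert_p^{\frac{N(q-p)}{p}},
\]
using $u\in S_a$, i.e. $\lVert u\rVert_p=a$. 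This already produces the second term on the right-hand side of the claimed inequality, so it remains only to absorb the potential term into a fraction of $\frac1p\lVert\nabla u\rVert_p^p$.

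For the potential term, the key step is Hölder's inequality with exponents $\frac{N}{p}$ and $\frac{N}{N-p}$ (legitimate since $N>p$ here, as $1<p<N$ in Theorem 1.1):
\[
\int_{\mathbb{R}^N}V(x)\lvert u\rvert^p\,dx\leqslant\lVert V\rVert_{N/p}\,\lVert u\rVert_{p^*}^p
\leqslant S^{-1}\lVert V\rVert_{N/p}\,\lVert\nabla u\rVert_p^p,
\]
where $S$ denotes the best Sobolev constant in the embedding $D^{1,p}(\mathbb{R}^N)\hookrightarrow L^{p^*}(\mathbb{R}^N)$, i.e. $S\lVert u\rVert_{p^*}^p\leqslant\lVert\nabla u\rVert_p^p$. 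Combining the two bounds,
\[
E(u)\geqslant\frac1p\Bigl(1-S^{-1}\lVert V\rVert_{N/p}\Bigr)\lVert\nabla u\rVert_p^p-\frac1q C_{N,p,q}^q a^{q-\frac{N(q-p)}{p}}\lVert\nabla u\rVert_p^{\frac{N(q-p)}{p}},
\]
and assumption (\ref{Vcon1}), $\lVert V\rVert_{N/p}\leqslant(1-\delta)S$, yields $1-S^{-1}\lVert V\rVert_{N/p}\geqslant\delta$, which gives exactly the asserted inequality.

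There is no real obstacle here; this is a routine two-line estimate once the right inequalities are invoked. The only point requiring a little care is making sure the Sobolev constant $S$ appearing in (\ref{Vcon1}) is the same one used in the Hölder–Sobolev bound above, and that the notation is consistent with how $S$ is used later in (\ref{Vcon2})--(\ref{Vcon4}); this is a matter of fixing conventions rather than a mathematical difficulty. I would also note in passing that the hypothesis $q>p+\frac{p^2}{N}$ guarantees $\frac{N(q-p)}{p}>p$, so the second term grows faster than the first in $\lVert\nabla u\rVert_p$ — this is what will later make the mountain pass geometry work, but it is not needed for the statement of the lemma itself.
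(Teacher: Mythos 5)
Your proposal is correct and follows essentially the same argument as the paper: Gagliardo--Nirenberg with $\lVert u\rVert_p=a$ for the $L^q$ term, H\"older plus the Sobolev inequality for the potential term, and then assumption (\ref{Vcon1}) to extract the factor $\delta$. No differences worth noting.
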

    \begin{proof}
    	By the Gagliardo-Nirenberg inequality, H\"{o}lder inequality and Sobolev inequality, we have
    	    \[\lVert u\rVert_{q}^q\leqslant C_{N,p,q}^q\lVert u\rVert_{p}^{q-\frac{N(q-p)}{p}}\lVert\nabla u\rVert_{p}^{\frac{N(q-p)}{p}}=C_{N,p,q}^qa^{q-\frac{N(q-p)}{p}}\lVert\nabla u\rVert_{p}^{\frac{N(q-p)}{p}},\]
    	and
    	    \[\int_{\mathbb{R}^N} V(x)\lvert u\rvert^pdx\leqslant\lVert V\rVert_{N/p}\lVert u\rVert_{p^*}^p\leqslant S^{-1}\lVert V\rVert_{N/p}\lVert\nabla u\rVert_{p}^p.\]
    	Hence,
    	    \[E(u)\geqslant\frac{1}{p}(1-S^{-1}\lVert V\rVert_{N/p})\lVert\nabla u\rVert_{p}^p-\frac{1}{q}C_{N,p,q}^qa^{q-\frac{N(q-p)}{p}}\lVert\nabla u\rVert_{p}^{\frac{N(q-p)}{p}},\]
    	which together with (\ref{Vcon1}) implies
    	    \[E(u)\geqslant\frac{\delta}{p}\lVert\nabla u\rVert_{p}^p-\frac{1}{q}C_{N,p,q}^qa^{q-\frac{N(q-p)}{p}}\lVert\nabla u\rVert_{p}^{\frac{N(q-p)}{p}}.\]
    \end{proof}

    \begin{lemma}\label{sconvergence}
    	For every $u\in S_{a}$,
    	    \begin{equation}\label{sstaru}
    	    	\lim_{s\rightarrow-\infty}\lVert\nabla(s\star u)\rVert_{p}^p=0,\quad\lim_{s\rightarrow+\infty}\lVert\nabla(s\star u)\rVert_{p}=+\infty,
    	    \end{equation}
    	and
    	    \begin{equation}\label{Fsstaru}
    	    	\lim_{s\rightarrow-\infty}E(s\star u)=0,\quad\lim_{s\rightarrow+\infty}E(s\star u)=-\infty.
    	    \end{equation}
    \end{lemma}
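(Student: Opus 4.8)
The plan is to reduce the four limits to the explicit behaviour of $E$ and of $\|\nabla\,\cdot\,\|_p$ under the $L^p$-norm-preserving dilation $(s\star u)(x)=e^{Ns/p}u(e^sx)$, for which $\|s\star u\|_p=\|u\|_p$, so that $s\star u\in S_a$ whenever $u\in S_a$. A change of variables gives the scaling identities
\begin{align*}
&\|\nabla(s\star u)\|_p^p=e^{ps}\|\nabla u\|_p^p,\qquad \|s\star u\|_q^q=e^{\frac{N(q-p)}{p}s}\|u\|_q^q,\\
&\int_{\mathbb{R}^N}V(x)|s\star u|^p\,dx=\int_{\mathbb{R}^N}V(e^{-s}x)|u(x)|^p\,dx,
\end{align*}
and hence
\[
E(s\star u)=\frac1p e^{ps}\|\nabla u\|_p^p-\frac1q e^{\frac{N(q-p)}{p}s}\|u\|_q^q-\frac1p\int_{\mathbb{R}^N}V(e^{-s}x)|u(x)|^p\,dx.
\]
From the first identity I immediately read off $\|\nabla(s\star u)\|_p^p=e^{ps}\|\nabla u\|_p^p\to0$ as $s\to-\infty$ and $\|\nabla(s\star u)\|_p=e^{s}\|\nabla u\|_p\to+\infty$ as $s\to+\infty$; here $\nabla u\not\equiv0$, since $u\in S_a$ with $a>0$ and a function in $W^{1,p}(\mathbb{R}^N)$ with vanishing gradient is identically zero.

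For the limit $s\to-\infty$ I would check that each of the three terms of $E(s\star u)$ tends to $0$: the gradient term has rate $e^{ps}$, the $L^q$ term has rate $e^{\frac{N(q-p)}{p}s}$ with positive exponent (because $q>p$), and for the potential term Hölder's inequality followed by the Sobolev inequality gives
\[
0\le\int_{\mathbb{R}^N}V(x)|s\star u|^p\,dx\le\|V\|_{N/p}\,\|s\star u\|_{p^*}^p\le S^{-1}\|V\|_{N/p}\,e^{ps}\|\nabla u\|_p^p,
\]
which tends to $0$ as $s\to-\infty$. Hence $E(s\star u)\to0$. For the limit $s\to+\infty$, since $V\ge0$ the potential term is non-positive and may simply be discarded, leaving
\[
E(s\star u)\le\frac1p e^{ps}\|\nabla u\|_p^p-\frac1q e^{\frac{N(q-p)}{p}s}\|u\|_q^q.
\]
The hypothesis $q>p+\frac{p^2}{N}$ is exactly the inequality $\frac{N(q-p)}{p}>p$, so the negative exponential outgrows the positive one; together with $\|u\|_q>0$ (again because $u\ne0$) this forces $E(s\star u)\to-\infty$.

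I do not expect a genuine obstacle here: once the dilation is written out, $E(s\star u)$ is a sum of monomials in $e^s$ with controllable coefficients, and the lemma is a matter of comparing exponents. The only points deserving a line of justification are that $u$ (hence $\nabla u$ and $\|u\|_q$) does not vanish on $S_a$, and the decay of $\int_{\mathbb{R}^N}V|s\star u|^p$ as $s\to-\infty$, which relies on $V\in L^{N/p}$ and the Sobolev embedding available in this section; in the $N\le p$ regime relevant to Theorem \ref{th2} one would instead split this integral over $B_1$ and $B_1^c$ and invoke the Lebesgue exponents from Lemma \ref{convergence}, with the same conclusion.
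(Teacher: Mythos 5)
Your proof is correct and follows essentially the same route as the paper: it reads off \eqref{sstaru} from the scaling identity $\lVert\nabla(s\star u)\rVert_p^p=e^{ps}\lVert\nabla u\rVert_p^p$, controls the potential term as $s\to-\infty$ by H\"older with $\lVert V\rVert_{N/p}$ (the paper uses the exact scaling $\lVert s\star u\rVert_{p^*}^p=e^{ps}\lVert u\rVert_{p^*}^p$ where you insert the Sobolev inequality, an immaterial difference), and discards the nonnegative potential term as $s\to+\infty$ to compare with $E_\infty(s\star u)\to-\infty$ via the exponent inequality $\frac{N(q-p)}{p}>p$. No gaps.
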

    \begin{proof}
    	It is obvious to obtain (\ref{sstaru}), here we only prove (\ref{Fsstaru}). By the H\"{o}lder inequality,
    	    \[\int_{\mathbb{R}^N}V(x)\lvert s\star u\rvert^pdx\leqslant\lVert V\rVert_{N/p}\lVert s\star u\rVert_{p^*}^p=e^{ps}\lVert V\rVert_{N/p}\lVert u\rVert_{p^*}^p\rightarrow 0\]
    	as $s\rightarrow-\infty$. Thus
    	    \[\lim_{s\rightarrow-\infty}E(s\star u)=\lim_{s\rightarrow-\infty}E_{\infty}(s\star u)-\frac{1}{p}\lim_{s\rightarrow-\infty}\int_{\mathbb{R}^N}V(x)\lvert s\star u\rvert^pdx=0.\]
    	Since $V\geqslant 0$, we have
    	    \[\lim_{s\rightarrow+\infty}E(s\star u)\leqslant\lim_{s\rightarrow+\infty}E_{\infty}(s\star u)=-\infty.\]
    	This ends of the proof.
    \end{proof}

    For every $c\in\mathbb{R}$ and $R>0$, define $E^c=\{u\in S_{a}: E(u)\leqslant c\}$ and
        \[M_{R}=\inf\{E(u): u\in S_{a}, \lVert\nabla u\rVert_{p}=R\}.\]
    From Lemmas \ref{Fgeq} and \ref{sconvergence}, it is easy to know that $E^0\neq\emptyset$ and there exist $\tilde{R}>R_{0}>0$ such that for all $0<R\leqslant R_{0}$, $0<M_{R}<M_{\tilde{R}}$. Thus, we can construct a min-max structure
        \begin{equation}\label{Gamma}
        	\Gamma=\{\xi\in C([0,1],\mathbb{R}\times S_{a}): \xi(0)\in(0,A_{R_{0}}), \xi(1)\in(0,E^0)\}
        \end{equation}
    with associated min-max level
        \begin{equation}\label{mVa}
        	m_{V,a}=\inf_{\xi\in\Gamma}\max_{t\in[0,1]}\tilde{E}(\xi(t))>0,
        \end{equation}
    where
        \[A_{k}=\{u\in S_{a}: \lVert\nabla u\rVert_{p}\leqslant R\}\quad\mbox{and}\quad\tilde{E}(s,u)=E(s\star u).\]

    Next, we prove $E$ has a bounded PS sequence. Before starting the proof, we give the definition of homotopy-stable family.

    \begin{definition}\label{homo}
    	{\rm(\cite[Definition 5.1]{gn})} Let $B$ be a closed subset of $X$. We shall say that a class of $\mathcal{F}$ of compact subsets of $X$ is a homotopy-stable family with extended boundary $B$ if for any set $A$ in $\mathcal{F}$ and any $\eta\in C([0,1]\times X,X)$ satisfying $\eta(t,x)=x$ for all $(t,x)$ in $(\{0\}\times X)\cup([0,1]\times B)$ we have that $\eta(\{1\}\times A)\in\mathcal{F}$.
    \end{definition}

    \begin{lemma}\label{homops}
    	{\rm(\cite[Theorem 5.2]{gn})} Let $\varphi$ be a $C^1$-functional on a complete connected $C^1$-Finsler manifold $X$ and consider a homotopy-stable family $\mathcal{F}$ with an extend closed boundary $B$. Set $c=c(\varphi,\mathcal{F})$ and let $F$ be a closed subset of $X$ satisfying

        {\rm(F'1)} $A\cap F\backslash B\neq\emptyset$ for every $A\in\mathcal{F}$,\\
        and

        {\rm(F'2)} $\sup\varphi(B)\leqslant c\leqslant\inf\varphi(F)$.\\
        Then, for any sequence of sets $\{A_{n}\}$ in $\mathcal{F}$ such that $\lim_{n\rightarrow\infty}\sup_{A_{n}}\varphi=c$, there exists a sequence $\{x_{n}\}$ in $X\backslash B$ such that

        {\rm(i)} $\lim_{n\rightarrow\infty}\varphi(x_{n})=c$,

        {\rm(ii)} $\lim_{n\rightarrow\infty}\lVert d\varphi(x_{n})\rVert=0$,

        {\rm(iii)} $\lim_{n\rightarrow\infty}\mbox{\rm dist}(x_{n},F)=0$,

        {\rm(iv)} $\lim_{n\rightarrow\infty}\mbox{\rm dist}(x_{n},A_{n})=0$.
    \end{lemma}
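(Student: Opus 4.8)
The plan is to recall the quantitative deformation argument that underlies \cite[Theorem 5.2]{gn}, since the statement is exactly that abstract minimax principle. I would argue by contradiction. If, for the given minimizing family $\{A_{n}\}$, no sequence $\{x_{n}\}\subset X\setminus B$ satisfied (i)--(iv) simultaneously, then a routine diagonal reduction (a sequence achieving all tolerances exists iff the relevant infima tend to $0$) shows that, after passing to a subsequence of $\{A_{n}\}$ (still minimizing), there is $\varepsilon\in(0,1)$ with
\[
\|d\varphi(x)\|\geqslant\varepsilon\quad\text{for all }x\in X\setminus B\text{ with }|\varphi(x)-c|<\varepsilon,\ \mathrm{dist}(x,F)<\varepsilon,\ \mathrm{dist}(x,A_{n})<\varepsilon .
\]
I would then fix $n$ so large that $\sup_{A_{n}}\varphi<c+\varepsilon^{2}/16$.

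Next I would construct, on the Finsler manifold $X$, a locally Lipschitz unit pseudo-gradient field $W$ for $\varphi$ on the tube $\mathcal{N}=\{|\varphi-c|\leqslant\varepsilon/2\}\cap\{\mathrm{dist}(\cdot,F)\leqslant\varepsilon/2\}\cap\{\mathrm{dist}(\cdot,A_{n})\leqslant\varepsilon/2\}$ (off $B$, where $\|d\varphi\|\geqslant\varepsilon$), with $\langle d\varphi(x),W(x)\rangle\geqslant\varepsilon/2$ there; then truncate it by a locally Lipschitz cut-off $\chi$, $0\leqslant\chi\leqslant1$, supported in $\mathcal{N}$, equal to $1$ on the $(\varepsilon/4)$-shrunken tube, and vanishing on $B$. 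The time-$1$ flow $\eta\in C([0,1]\times X,X)$ of $-T\,\chi W$ (with $T\approx\varepsilon/4$) then fixes $\{0\}\times X$ and $[0,1]\times B$, keeps trajectories out of $B$ (uniqueness for the ODE, since $\chi W$ is locally Lipschitz and vanishes on $B$), and decreases $\varphi$ at rate $\geqslant\varepsilon/2$ wherever $\chi\equiv1$. Since $\mathcal{F}$ is homotopy-stable with extended boundary $B$ (Definition \ref{homo}), $A_{n}':=\eta(\{1\}\times A_{n})\in\mathcal{F}$.

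Finally I would invoke (F'1)--(F'2): (F'1) applied to $A_{n}'\in\mathcal{F}$ gives $z\in A_{n}'\cap(F\setminus B)$, say $z=\eta(1,x)$ with $x\in A_{n}$, and necessarily $x\notin B$ (otherwise $z=x\in B$). Monotonicity of $\varphi$ along the flow together with $\inf_{F}\varphi\geqslant c$ forces $\varphi(x)\geqslant\varphi(z)\geqslant c$; combined with $\varphi(x)<c+\varepsilon^{2}/16$ and the smallness of the displacement, this pins the whole trajectory $\{\eta(t,x):t\in[0,1]\}$ inside the region where $\chi\equiv1$ and $\|d\varphi\|\geqslant\varepsilon$ (unless $\varphi$ has already dropped below $c$, which is equally impossible since $\varphi(z)\geqslant c$). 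Integrating the descent estimate yields $\varphi(z)<\varphi(x)-\varepsilon^{2}/16<c$, a contradiction; hence the desired sequence $\{x_{n}\}$ exists, and (iii)--(iv) come for free because the points are produced inside the $\varepsilon$-tube around $F$ and around $A_{n}$. The main obstacle is not any single inequality but the bookkeeping that makes the truncated flow simultaneously (a) fix $B$ so homotopy-stability applies, (b) still act on the part of $A_{n}$ that (F'1) forces to meet $F$, and (c) prevent the relevant trajectories from leaving the tube before the descent is spent; this is exactly where $\sup_{B}\varphi\leqslant c\leqslant\inf_{F}\varphi$ and (F'1) enter, and the careful treatment is carried out in \cite[Section 5]{gn}.
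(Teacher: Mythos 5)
This lemma is not proved in the paper at all: it is quoted verbatim as \cite[Theorem 5.2]{gn} and used as a black box, so there is no in-paper argument to compare yours against. Your sketch is the standard deformation-lemma proof of such a minimax principle and its skeleton is sound: negate (i)--(iv) to get a uniform lower bound on $\lVert d\varphi\rVert$ on an $\varepsilon$-tube around $F$, $A_{n}$ and the level $c$; run a truncated pseudo-gradient flow; invoke homotopy-stability to keep $\eta(\{1\}\times A_{n})$ in $\mathcal{F}$; and use (F'1)--(F'2) to produce a trajectory that must descend strictly below $c$ while ending in $F$, a contradiction. The quantitative bookkeeping (displacement at most $T\approx\varepsilon/4$, descent $T\cdot\varepsilon/2>\varepsilon^{2}/16$, monotonicity of $\varphi$ pinning the trajectory in the region $\{c\leqslant\varphi<c+\varepsilon^{2}/16\}$) is the right one. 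The single point you wave at rather than resolve is the genuine crux: you require the cut-off $\chi$ to vanish on $B$ \emph{and} to equal $1$ on the shrunken tube containing the relevant trajectories, but with only the non-strict inequality $\sup\varphi(B)\leqslant c$ in (F'2), points of $B$ at level exactly $c$ may accumulate on that tube, so the two requirements on $\chi$ can conflict. Handling this (and the absence of local compactness on a Finsler manifold) is precisely where Ghoussoub's actual proof expends its effort; as a reconstruction of the cited theorem your outline is acceptable, but it is a sketch of the known argument rather than a self-contained proof of the delicate case.
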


    \begin{lemma}\label{PS}
    	There exists a bounded PS sequence $\{u_{n}\}$ for $E|_{S_{a}}$ at the level $m_{V,a}$, that is
    	    \begin{equation}\label{unps}
    	    	E(u_{n})\rightarrow m_{V,a}\quad\mbox{and}\quad\lVert E'(u_{n})\rVert_{(T_{u_{n}}S_{a})^*}\rightarrow 0\quad\mbox{as}\ n\rightarrow\infty,
    	    \end{equation}
        such that
            \begin{equation}\label{Phozaev}
            	\lVert\nabla u_{n}\rVert_{p}^p-\frac{N(q-p)}{pq}\lVert u_{n}\rVert_{q}^q-\frac{1}{p}\int_{\mathbb{R}^N}V(x)(N\lvert u_{n}\rvert^p+p\lvert u_{n}\rvert^{p-2}u_{n}\nabla u_{n}\cdot x)dx\rightarrow 0
            \end{equation}
        as $n\rightarrow\infty$. Moreover, the related Lagrange multipliers of $\{u_{n}\}$
            \begin{equation}\label{lamn}
            	\lambda_{n}=-\frac{1}{a^p}F'(u_{n})u_{n},
            \end{equation}
         converges to $\lambda>0$ as $n\rightarrow\infty$.
    \end{lemma}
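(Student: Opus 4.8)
\emph{Proof plan.} The plan is to combine Jeanjean's scaling device with Ghoussoub's minimax principle (Lemma~\ref{homops}). Work on the complete connected $C^{1}$-Finsler manifold $\mathcal{M}:=\mathbb{R}\times S_{a}$ and set $\tilde{E}(s,u):=E(s\star u)$, where $(s\star u)(x):=e^{Ns/p}u(e^{s}x)\in S_{a}$, so that
\[\tilde{E}(s,u)=\frac{1}{p}e^{ps}\lVert\nabla u\rVert_{p}^{p}-\frac{1}{q}e^{\frac{N(q-p)}{p}s}\lVert u\rVert_{q}^{q}-\frac{1}{p}\int_{\mathbb{R}^{N}}V(x)|s\star u|^{p}\,dx .\]
The map $(s,u)\mapsto s\star u$ is jointly continuous on $\mathcal{M}$ and preserves $S_{a}$, and $\tilde{E}\in C^{1}(\mathcal{M})$ (the only delicate term is the potential one, but after one integration by parts its $s$-derivative only involves $V(x)\bigl(N|s\star u|^{p}+p|s\star u|^{p-2}(s\star u)\nabla(s\star u)\cdot x\bigr)$, which is integrable by $V\in L^{N/p}$ and $W=V|x|\in L^{N/(p-1)}$ via Hölder and Sobolev). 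With $\Gamma$, $m_{V,a}$ as in (\ref{Gamma})--(\ref{mVa}), the family $\mathcal{F}:=\{\xi([0,1]):\xi\in\Gamma\}$ is homotopy-stable with extended boundary $B:=(\{0\}\times A_{R_{0}})\cup(\{0\}\times E^{0})$, $c(\tilde{E},\mathcal{F})=m_{V,a}$, and $\sup_{B}\tilde{E}<m_{V,a}$ by Lemmas~\ref{Fgeq}, \ref{sconvergence} and (\ref{mVa}). Then apply Lemma~\ref{homops} with $\varphi=\tilde{E}$ and $F:=\{(s,u)\in\mathcal{M}:\tilde{E}(s,u)\geq m_{V,a}\}$, which is closed and, together with $\sup_{B}\tilde{E}<m_{V,a}$, satisfies (F'1)--(F'2).

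Since for $\xi=(\sigma,\gamma)\in\Gamma$ the path $t\mapsto(0,\sigma(t)\star\gamma(t))$ again lies in $\Gamma$ with the same $\tilde{E}$-values (it fixes the endpoints, where $\sigma$ vanishes), one may take a minimizing sequence $\{A_{n}\}\subset\mathcal{F}$ contained in $\{0\}\times S_{a}$ and obtain $(s_{n},v_{n})\in\mathcal{M}\setminus B$ with $\tilde{E}(s_{n},v_{n})\to m_{V,a}$, $\lVert d\tilde{E}(s_{n},v_{n})\rVert\to0$ and $\mathrm{dist}((s_{n},v_{n}),A_{n})\to0$; the last relation forces $s_{n}\to0$. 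Put $u_{n}:=s_{n}\star v_{n}$. From $\tilde{E}(\sigma,s\star u)=\tilde{E}(\sigma+s,u)$ one gets $E(u_{n})=\tilde{E}(s_{n},v_{n})\to m_{V,a}$ and $\partial_{s}\tilde{E}(s_{n},v_{n})\to0$; a change of variables together with the integration by parts above shows that $\partial_{s}\tilde{E}(s_{n},v_{n})$ equals the left-hand side of (\ref{Phozaev}), so (\ref{Phozaev}) holds. Since $\{s_{n}\}$ is bounded, $\lVert(-s_{n})\star\phi\rVert_{W^{1,p}}\leq C\lVert\phi\rVert_{W^{1,p}}$ uniformly in $n$, whence $\lVert E'(u_{n})\rVert_{(T_{u_{n}}S_{a})^{*}}\leq C\lVert d\tilde{E}(s_{n},v_{n})\rVert\to0$: this is the Palais--Smale property in (\ref{unps}). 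Next, combining $E(u_{n})\to m_{V,a}$ with (\ref{Phozaev}) and eliminating $\lVert u_{n}\rVert_{q}^{q}$ gives
\[\bigl(N(q-p)-p^{2}\bigr)\lVert\nabla u_{n}\rVert_{p}^{p}=N(q-2p)\!\int_{\mathbb{R}^{N}}\!V|u_{n}|^{p}dx-p^{2}\!\int_{\mathbb{R}^{N}}\!V|u_{n}|^{p-2}u_{n}\nabla u_{n}\cdot x\,dx+Np(q-p)m_{V,a}+o(1);\]
since $\int V|u_{n}|^{p}\leq S^{-1}\lVert V\rVert_{N/p}\lVert\nabla u_{n}\rVert_{p}^{p}$ and $\bigl|\int V|u_{n}|^{p-2}u_{n}\nabla u_{n}\cdot x\bigr|\leq\int|W||u_{n}|^{p-1}|\nabla u_{n}|\leq S^{-\frac{p-1}{p}}\lVert W\rVert_{N/(p-1)}\lVert\nabla u_{n}\rVert_{p}^{p}$, hypothesis (\ref{Vcon2}) forces $\{\lVert\nabla u_{n}\rVert_{p}\}$, hence $\{u_{n}\}\subset W^{1,p}(\mathbb{R}^{N})$, to be bounded, with $\limsup_{n}\lVert\nabla u_{n}\rVert_{p}^{p}\leq Np(q-p)m_{V,a}/\bigl(N(q-p)-p^{2}-N|q-2p|S^{-1}\lVert V\rVert_{N/p}-p^{2}S^{-\frac{p-1}{p}}\lVert W\rVert_{N/(p-1)}\bigr)$; Lemma~\ref{Fgeq} and $m_{V,a}>0$ give in addition $\liminf_{n}\lVert\nabla u_{n}\rVert_{p}>0$.

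Finally, by (\ref{unps}) there are $\lambda_{n}$ as in (\ref{lamn}) with $-\Delta_{p}u_{n}-V|u_{n}|^{p-2}u_{n}+\lambda_{n}|u_{n}|^{p-2}u_{n}=|u_{n}|^{q-2}u_{n}+o(1)$ in $W^{-1,p}(\mathbb{R}^{N})$, and $\{\lambda_{n}\}$ is bounded, so up to a subsequence $\lambda_{n}\to\lambda$. Testing with $u_{n}$ gives $\lambda_{n}a^{p}=\lVert u_{n}\rVert_{q}^{q}+\int V|u_{n}|^{p}-\lVert\nabla u_{n}\rVert_{p}^{p}+o(1)$; substituting the energy relation and the identity of the previous step, and writing $A=Np-(N-p)q$, $B=N(q-p)-p^{2}$, $D=N(q-p)^{2}$, one arrives at
\[\lambda_{n}a^{p}B=pA\,m_{V,a}-(N-p)(q-p)\!\int_{\mathbb{R}^{N}}\!V|u_{n}|^{p}dx-p(q-p)\!\int_{\mathbb{R}^{N}}\!V|u_{n}|^{p-2}u_{n}\nabla u_{n}\cdot x\,dx+o(1).\]
Using $V\geq0$, the two estimates above and the $\limsup$ bound for $\lVert\nabla u_{n}\rVert_{p}^{p}$, a direct computation shows that (\ref{Vcon3}) is precisely the inequality guaranteeing $\liminf_{n}\lambda_{n}a^{p}B>0$, hence $\lambda>0$. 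Conditions (\ref{Vcon1}) and (\ref{Vcon4}) enter only in the construction of the minimax: (\ref{Vcon1}) via Lemma~\ref{Fgeq} for the mountain-pass geometry, and (\ref{Vcon4}) to make $s\mapsto\tilde{E}(s,u)$ unimodal.

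\emph{Main obstacles.} I expect two delicate points. First, the $C^{1}$-regularity of $\tilde{E}$ and the precise form of $\partial_{s}\tilde{E}$ when $V$ is merely in $L^{N/p}$: this is handled by never differentiating $V$ and keeping the potential term in its integrated-by-parts form, so that only $V$ and $W=V|x|$ occur. Second, transferring the Palais--Smale property from $\mathcal{M}$ to $S_{a}$, which requires $\{s_{n}\}$ bounded; this is achieved through the reduction to paths with vanishing $\mathbb{R}$-component and conclusion (iv) of Lemma~\ref{homops}. The algebraic bookkeeping linking (\ref{Vcon2}) to boundedness and (\ref{Vcon3}) to $\lambda>0$ is elementary but must be carried out with care.
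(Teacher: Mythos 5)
Your proposal is correct and follows essentially the same route as the paper: the Jeanjean auxiliary functional $\tilde E(s,u)=E(s\star u)$ combined with Ghoussoub's minimax principle on $\mathbb{R}\times S_a$, reduction to paths with vanishing $\mathbb{R}$-component so that (iv) forces $s_n\to0$, the identification of $\partial_s\tilde E$ with the Pohozaev-type quantity, and the same algebraic elimination of $\lVert u_n\rVert_q^q$ together with the Hölder--Sobolev bounds on the two potential terms to get boundedness from (\ref{Vcon2}) and positivity of $\lambda$ from (\ref{Vcon3}). The only inaccuracy is the closing side remark that (\ref{Vcon4}) serves to make $s\mapsto\tilde E(s,u)$ unimodal --- in the paper it is used only in Lemma \ref{Fgeq0} to rule out solutions with negative energy --- but this does not affect the proof of the present lemma.
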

    \begin{proof}
    	Firstly, we prove the existence of PS sequence. Let $X=\mathbb{R}\times S_{a}$, $\mathcal{F}=\Gamma$ which is given by (\ref{Gamma}) and $B=(0,A_{R_{0}})\cup(0,E^0)$. By Definition \ref{homo}, $\Gamma$ is a homotopy-stable family of compact subsets of $\mathbb{R}\times S_{a}$ with extend closed boundary $(0,A_{R_{0}})\cup(0,E^0)$. Let
    	    \[\varphi=\tilde{E}(s,u)\quad c=m_{V,a}\quad\mbox{and}\quad F=\{(s,u)\in\mathbb{R}\times S_{a}:\tilde{E}(s,u)\geqslant c\}.\]
    	We can check that $E$ satisfies (F'1) and (F'2) in Lemma \ref{homops}. Considering the sequence
    	    \[\{A_{n}\}:=\{\xi_{n}\}=\{(\alpha_{n},\beta_{n})\}\subset\Gamma\]
    	such that
    	    \[m_{V,a}\leqslant\max_{t\in[0,1]}\tilde{F}(\xi_{n}(t))<m_{V,a}+\frac{1}{n}\]
    	(we may assume that $\alpha_{n}=0$, if not, replacing $\{(\alpha_{n},\beta_{n})\}$ with $\{(0,\alpha_{n}\star\beta_{n})\}$). Then, by Lemma \ref{homops}, there exists a sequence $\{(s_{n},v_{n})\}$ for $\tilde{E}|_{S_{a}}$ at level $m_{V,a}$ such that
    	    \[\partial_{s}\tilde{F}(s_{n},v_{n})\rightarrow 0\quad\mbox{and}\quad\lVert\partial_{u}\tilde{F}(s_{n},v_{n})\rVert_{(T_{v_{n}}S_{a})^*}\rightarrow 0\quad\mbox{as}\ n\rightarrow\infty.\]
    	Moreover, by (iv) in Lemma \ref{homops}, we have
    	    \[\lvert s_{n}\rvert+\mbox{dist}_{W^{1,p}}(v_{n},\beta_{n}([0,1]))\rightarrow 0\quad\mbox{as}\ n\rightarrow\infty,\]
    	which implies $s_{n}\rightarrow 0$ as $n\rightarrow 0$.
    	Therefore, we have
    	    \[E(s_{n}\star v_{n})=\tilde{E}(s_{n},v_{n})\rightarrow m_{V,a}\quad\mbox{as}\ n\rightarrow\infty\]
    	and
    	    \[E'(s_{n}\star v_{n})(s_{n}\star\varphi)=\partial_{u}\tilde{E}(s_{n},v_{n})\varphi=o_{n}(1)\lVert\varphi\rVert=o_{n}(1)\lVert s_{n}\star\varphi\rVert\]
    	for every $\varphi\in T_{v_{n}}S_{a}$. Let $\{u_{n}\}=\{(s_{n}\star v_{n})\}$. Then $\{u_{n}\}$ is a PS sequence for $E|_{S_{a}}$ at the level $m_{V,a}$. Differentiating $\tilde{E}$ with respect to $s$, we obtain (\ref{Phozaev}).
    	
    	Next, we claim that $\{u_{n}\}$ is bounded in $W^{1,p}(\mathbb{R}^N)$. Set
    	    \[a_{n}:=\lVert\nabla u_{n}\rVert_{p}^p,\quad b_{n}:=\lVert u_{n}\rVert_{q}^q,\]
    	    \[c_{n}:=\int_{\mathbb{R}^N}V(x)\lvert u_{n}\rvert^pdx\quad\mbox{and}\quad d_{n}:=\int_{\mathbb{R}^N}V(x)\lvert u_{n}\rvert^{p-2}u_{n}\nabla u_{n}\cdot xdx.\]
    	By (\ref{unps}) and (\ref{Phozaev}), we have
    	    \begin{equation}\label{anbncnm}
    	    	\frac{1}{p}a_{n}-\frac{1}{q}b_{n}-\frac{1}{p}c_{n}=m_{V,a}+o_{n}(1)
    	    \end{equation}
    	and
    	    \begin{equation}\label{anbncndn}
    	    	a_{n}-\frac{N(q-p)}{pq}b_{n}-\frac{N}{p}c_{n}-d_{n}=o_{n}(1),
    	    \end{equation}
    	which implies
    	    \[a_{n}=\frac{Np(q-p)}{B}m_{V,a}-\frac{N(2p-q)}{B}c_{n}-\frac{p^2}{B}d_{n}+o_{n}(1).\]
    	Since
    	    \begin{equation}\label{cndn}
    	    	\lvert c_{n}\rvert\leqslant S^{-1}\lVert V\rVert_{N/p}a_{n},\quad\mbox{and}\quad\lvert d_{n}\rvert\leqslant S^{-\frac{p-1}{p}}\lVert W\rVert_{N/(p-1)}a_{n},
    	    \end{equation}
    	we get
    	    \[\Big(B-N\lvert q-2p\rvert S^{-1}\lVert V\rVert_{N/p}-p^2S^{-\frac{p-1}{p}}\lVert W\rVert_{N/(p-1)}\Big)a_{n}=Np(q-p)m_{V,a}+o_{n}(1).\]
    	By assumption (\ref{Vcon2}), $\{a_{n}\}$ is bounded and
    	    \begin{equation}\label{an}
    	    	a_{n}\leqslant\frac{Np(q-p)m_{V,a}}{B-N\lvert q-2p\rvert S^{-1}\lVert V\rVert_{N/p}-p^2S^{-\frac{p-1}{p}}\lVert W\rVert_{N/(p-1)}}+o_{n}(1).
    	    \end{equation}
    	Finally, we claim $\lambda>0$. By (\ref{unps}), there exists $\lambda_{n}>0$ such that
    	    \[E'(u_{n})\varphi=\lambda_{n}\int_{\mathbb{R}^N}\lvert u_{n}\rvert^{p-2}u_{n}\varphi dx+o_{n}(1)\lVert\varphi\rVert_{W^{1,p}},\]
    	and hence we can choose $\lambda_{n}$ as (\ref{lamn}). Since $a_{n}$ is bounded, so $b_{n}$, $c_{n}$, $d_{n}$ and $\lambda_{n}$ are also bounded, we assume that they converge to $a_{0}$, $b_{0}$, $c_{0}$, $d_{0}$ and $\lambda$ respectively. By (\ref{anbncnm}), (\ref{anbncndn}), (\ref{cndn}) and (\ref{an}), we have
    	    \begin{align*}
    	    	\lambda a^p&=-\lim_{n\rightarrow\infty}E'(u_{n})u_{n}=-a_{0}+b_{0}+c_{0}\\
    	    	&=\frac{1}{B}\Big\{p[Np-(N-p)q]m_{V,a}-(N-p)(q-p)c-p(q-p)d_{0}\Big\}\\
    	    	&\geqslant\frac{1}{B}\Big\{p[Np-(N-p)q]m_{V,a}-(N-p)(q-p)S^{-1}\lVert V\rVert_{N/p}a_{0}- p(q-p)S^{-\frac{p-1}{p}}\lVert W\rVert_{N/(p-1)}a_{0}\Big\}\\
    	    	&\geqslant\frac{p}{B}\Big\{A-\frac{(N-p)DS^{-1}\lVert V\rVert_{N/p}}{B-N\lvert q-2p\rvert S^{-1}\lVert V\rVert_{N/p}-p^2S^{-(p-1)/p}\lVert W\rVert_{N/(p-1)}}-\\
    	    	&\qquad\qquad\frac{pDS^{-(p-1)/p}\lVert W\rVert_{N/(p-1)}}{B-N\lvert q-2p\rvert S^{-1}\lVert V\rVert_{N/p}-p^2S^{-(p-1)/p}\lVert W\rVert_{N/(p-1)}}\Big\}m_{V,a}\\
    	    	&=\frac{p}{B}\frac{AB-[AN\lvert q-2p\rvert+(N-p)D]S^{-1}\lVert V\rVert_{N/p}-p(pA+D)S^{-(p-1)/p}\lVert W\rVert_{N/(p-1)}}{B-N\lvert q-2p\rvert S^{-1}\lVert V\rVert_{N/p}-p^2S^{-(p-1)/p}\lVert W\rVert_{N/(p-1)}}m_{V,a}.
    	    \end{align*}
        Therefore, assumption (\ref{Vcon3}) implies $\lambda>0$.
    \end{proof}

    \begin{lemma}\label{Fgeq0}
    	Let $u$ be a weak solution of {\rm(\ref{equation})}. If {\rm(\ref{Vcon4})} holds, then $E(u)\geqslant 0$.
    \end{lemma}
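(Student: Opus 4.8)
The plan is to combine two integral identities that any weak solution $u$ of (\ref{equation}) satisfies, eliminate the term $\lVert u\rVert_{q}^{q}$ between them, and then bound the resulting potential terms by $\lVert\nabla u\rVert_{p}^{p}$ using the same H\"{o}lder and Sobolev estimates that produced (\ref{cndn}). First I would record the Nehari-type identity, obtained by testing (\ref{equation}) with $u$ itself,
\[
\lVert\nabla u\rVert_{p}^{p}+\lambda\lVert u\rVert_{p}^{p}-\int_{\mathbb{R}^{N}}V(x)\lvert u\rvert^{p}\,dx=\lVert u\rVert_{q}^{q},
\]
together with the Pohozaev-type identity, that is, the equality version of (\ref{Phozaev}),
\[
\lVert\nabla u\rVert_{p}^{p}-\frac{N(q-p)}{pq}\lVert u\rVert_{q}^{q}-\frac{1}{p}\int_{\mathbb{R}^{N}}V(x)\big(N\lvert u\rvert^{p}+p\lvert u\rvert^{p-2}u\,\nabla u\cdot x\big)\,dx=0.
\]
The latter is obtained exactly as in the proof of Lemma \ref{PS}: since $u$, being a weak solution of (\ref{equation}), is a constrained critical point of $E$ on an $L^{p}$-sphere, and the dilation $s\star u$ preserves the $L^{p}$-norm, the function $s\mapsto E(s\star u)$ has a critical point at $s=0$; differentiating the explicit formula for $E(s\star u)$ and setting $s=0$ gives the displayed identity.

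Next I would eliminate $\lVert u\rVert_{q}^{q}$: solving the Pohozaev identity for $\lVert u\rVert_{q}^{q}$ and substituting into $E(u)=\frac{1}{p}\lVert\nabla u\rVert_{p}^{p}-\frac{1}{q}\lVert u\rVert_{q}^{q}-\frac{1}{p}\int_{\mathbb{R}^{N}}V(x)\lvert u\rvert^{p}\,dx$, a short computation gives
\[
E(u)=\frac{1}{pN(q-p)}\Big[B\,\lVert\nabla u\rVert_{p}^{p}+N(2p-q)\int_{\mathbb{R}^{N}}V(x)\lvert u\rvert^{p}\,dx+p^{2}\int_{\mathbb{R}^{N}}V(x)\lvert u\rvert^{p-2}u\,\nabla u\cdot x\,dx\Big],
\]
where $B=N(q-p)-p^{2}>0$; note that the Nehari identity and the multiplier $\lambda$ drop out entirely, so no assumption on the sign of $\lambda$ is needed. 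Since $V\geqslant 0$ one has $0\leqslant\int_{\mathbb{R}^{N}}V(x)\lvert u\rvert^{p}\,dx\leqslant S^{-1}\lVert V\rVert_{N/p}\lVert\nabla u\rVert_{p}^{p}$, whence $N(2p-q)\int_{\mathbb{R}^{N}}V(x)\lvert u\rvert^{p}\,dx\geqslant -N\lvert q-2p\rvert S^{-1}\lVert V\rVert_{N/p}\lVert\nabla u\rVert_{p}^{p}$ (separating the cases $q\leqslant 2p$ and $q>2p$); and, with $W(x)=V(x)\lvert x\rvert$, H\"{o}lder's and Sobolev's inequalities yield
\[
\Big\lvert\int_{\mathbb{R}^{N}}V(x)\lvert u\rvert^{p-2}u\,\nabla u\cdot x\,dx\Big\rvert\leqslant\int_{\mathbb{R}^{N}}W(x)\lvert u\rvert^{p-1}\lvert\nabla u\rvert\,dx\leqslant S^{-\frac{p-1}{p}}\lVert W\rVert_{N/(p-1)}\lVert\nabla u\rVert_{p}^{p},
\]
which are exactly the bounds recorded in (\ref{cndn}).

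Putting these estimates into the formula for $E(u)$ gives
\[
E(u)\geqslant\frac{\lVert\nabla u\rVert_{p}^{p}}{pN(q-p)}\Big[B-N\lvert q-2p\rvert S^{-1}\lVert V\rVert_{N/p}-p^{2}S^{-\frac{p-1}{p}}\lVert W\rVert_{N/(p-1)}\Big],
\]
and the bracket is nonnegative precisely because of assumption (\ref{Vcon4}); hence $E(u)\geqslant 0$. The only point requiring some care is the rigorous derivation of the Pohozaev identity for a weak solution of the quasilinear equation — one needs enough regularity and decay of $u$ to differentiate $s\mapsto E(s\star u)$ at $s=0$ and to identify the derivative — but this is the very same computation already carried out for the Palais--Smale sequence in the proof of Lemma \ref{PS}, and requires no new ingredient; everything else is a direct calculation combined with the elementary estimates behind (\ref{cndn}).
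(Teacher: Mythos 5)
Your proposal is correct and follows essentially the same route as the paper: use the Pohozaev identity to rewrite $E(u)$ as $\frac{N(q-p)-p^2}{Np(q-p)}\lVert\nabla u\rVert_p^p+\frac{2p-q}{p(q-p)}\int_{\mathbb{R}^N}V\lvert u\rvert^p\,dx+\frac{p}{N(q-p)}\int_{\mathbb{R}^N}V\lvert u\rvert^{p-2}u\,\nabla u\cdot x\,dx$, then bound the two potential terms via the H\"older--Sobolev estimates of (\ref{cndn}) and invoke (\ref{Vcon4}). Your added remarks (that the Nehari identity and $\lambda$ are not actually needed, and that the Pohozaev identity for a weak solution requires justification) are reasonable but do not change the argument, which is the one the paper gives.
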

    \begin{proof}
    	By the Pohozaev identity, assumption (\ref{Vcon4}) and (\ref{cndn}), we have
    	    \begin{align*}
    	    	E(u)&=\frac{1}{p}\lVert\nabla u\rVert_{p}^p-\frac{1}{q}\lVert u\rVert_{q}^q-\frac{1}{p}\int_{\mathbb{R}^N}V(x)\lvert u\rvert^{p-2}dx\\
    	    	&=\frac{N(q-p)-p^2}{Np(q-p)}\lVert\nabla u\rVert_{p}^p+\frac{2p-q}{p(q-p)}\int_{\mathbb{R}^N}V(x)\lvert u\rvert^pdx+\frac{p}{N(q-p)}\int_{\mathbb{R}^N}V(x)\lvert u\rvert^{p-2}u\nabla u\cdot xdx.\\
    	    	&\geqslant\frac{1}{Np(q-p)}\Big[N(q-p)-p^2-N\lvert q-2p\rvert S^{-1}\lVert V\rVert_{N/p}-p^2S^{-\frac{p-1}{p}}\lVert W\rVert_{N/(p-1)}\Big]\lVert\nabla u\rVert_{p}^p\\
    	    	&\geqslant 0.
    	    \end{align*}
    \end{proof}

    \begin{lemma}\label{mvama}
    	For every $a>0$, there holds $m_{V,a}<m_{a}$.
    \end{lemma}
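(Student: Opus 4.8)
The strategy is to exhibit a competitor path in $\Gamma$ along which $\tilde E$ stays strictly below $m_a$, using the fact that $V \geqslant 0$, $V \not\equiv 0$ to gain a strict inequality. Since $m_a = \inf_{w \in Z_a} E_\infty(w)$ is achieved by some $\psi \in Z_a$ (Lemma \ref{mageqz}), i.e. $\psi \in S_a$ solves the limiting equation with some $\lambda_\psi > 0$ and $E_\infty(\psi) = m_a$. The key observation about such a $\psi$ is that the map $s \mapsto E_\infty(s \star \psi)$ attains its (strict) maximum over $s \in \mathbb{R}$ exactly at $s = 0$: indeed, differentiating and using the Pohozaev/Nehari relations $\|\nabla \psi\|_p^p = \frac{N(q-p)}{pq}\|\psi\|_q^q$ for the solution $\psi$, one checks $\frac{d}{ds}E_\infty(s\star\psi)\big|_{s=0}=0$ and that $s=0$ is the unique global maximizer, with $\max_{s\in\mathbb R}E_\infty(s\star\psi) = E_\infty(\psi) = m_a$. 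This is the standard mountain-pass-on-the-sphere picture in the $L^p$-supercritical regime.

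Next I would take the path $\xi(t) = (0, s(t) \star \psi)$ for a suitable reparametrization $s:[0,1]\to\mathbb{R}$ with $s(t)\to-\infty$ as $t\to 0^+$ (so that $\xi(0)$ lands in the region $(0,A_{R_0})$, using $\lim_{s\to-\infty}\|\nabla(s\star\psi)\|_p = 0$ from Lemma \ref{sconvergence}) and $s(1)$ large positive (so that $\xi(1)\in(0,E^0)$, using $\lim_{s\to+\infty}E(s\star\psi) = -\infty$ from Lemma \ref{sconvergence}); strictly speaking one extends $s(t)$ to hit the boundary sets of $\Gamma$ exactly. Then $\xi \in \Gamma$ and
\[
m_{V,a} \leqslant \max_{t\in[0,1]} \tilde E(\xi(t)) = \max_{s\in\mathbb{R}} E(s\star\psi) = \max_{s\in\mathbb{R}}\Big(E_\infty(s\star\psi) - \frac{e^{ps}}{p}\int_{\mathbb{R}^N} V(x)|\psi|^p dx\Big).
\]
Now I would argue that this last maximum is strictly less than $m_a$. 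Let $s_V$ be a maximizer of $g(s) := E_\infty(s\star\psi) - \frac{e^{ps}}{p}\int V|\psi|^p$. Since $V \geqslant 0$ and $e^{ps_V} > 0$, we have $g(s_V) \leqslant E_\infty(s_V \star \psi) - \frac{e^{ps_V}}{p}\int V|\psi|^p \leqslant m_a - \frac{e^{ps_V}}{p}\int V|\psi|^p$. Because $V \not\equiv 0$, $\psi \not\equiv 0$ (it lies in $S_a$), and $V \geqslant 0$, the integral $\int_{\mathbb{R}^N} V(x)|\psi|^p\,dx > 0$ (a nonnegative, not-identically-zero integrand against a positive weight). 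Hence $g(s_V) < m_a$, i.e. $m_{V,a} < m_a$.

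The main obstacle is the bookkeeping at the endpoints of the path: one must verify carefully that for $s$ near $-\infty$ the point $(0, s\star\psi)$ genuinely lies in the extended-boundary piece $(0, A_{R_0})$ appearing in the definition (\ref{Gamma}) of $\Gamma$ — this needs $\|\nabla(s\star\psi)\|_p \leqslant R_0$ for $s$ small, which is (\ref{sstaru}) — and that for large positive $s$ one reaches $(0, E^0)$, which needs $E(s\star\psi) \leqslant 0$, guaranteed since $E(s\star\psi)\to-\infty$. A secondary point to handle cleanly is that the supremum of $g$ over $\mathbb{R}$ is actually attained (so that the strict inequality is not merely an infimum that could be approached); this follows because $g(s)\to 0$ as $s\to-\infty$ and $g(s)\to-\infty$ as $s\to+\infty$ while $g$ is continuous and positive somewhere (e.g. near the mountain-pass scale, by Lemma \ref{Fgeq}-type estimates), so the max is achieved at an interior point. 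Once these are in place, the strict positivity of $\int V|\psi|^p$ closes the argument.
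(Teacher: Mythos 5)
Your proposal is correct and follows essentially the same route as the paper: take the minimizer $\psi\in Z_a$ of $E_\infty$ from Lemma \ref{mageqz}, run the scaling path $s\star\psi$ (the paper parametrizes it as $[(1-t)h_0+th_1]\star\psi$) between the two boundary sets of $\Gamma$, use that $\max_{s\in\mathbb{R}}E_\infty(s\star\psi)=E_\infty(\psi)=m_a$ via the Pohozaev relation, and gain strictness from the positivity of the potential term. One small slip: $\int V(x)\lvert s\star\psi\rvert^p\,dx$ equals $\int V(e^{-s}y)\lvert\psi(y)\rvert^p\,dy$, not $e^{ps}\int V\lvert\psi\rvert^p\,dx$; this does not affect the argument, since all you need is that this quantity is strictly positive at the (attained) maximizer, which holds because the Gagliardo--Nirenberg extremal $\psi$ has full support.
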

    \begin{proof}
    	By Lemma \ref{mageqz}, there exists $w_{a}\in Z_{a}$ such that $E_{\infty}(w_{a})=m_{a}$. Let
    	    \[\xi(t)=\big(0,[(1-t)h_{0}+th_{1}]\star w_{a}\big),\]
    	where $h_{0}<<-1$ such that $\lVert\nabla(h_{0}\star w_{a})\rVert_{p}<R_{0}$ and $h_{1}>>1$ such that $E(h_{1}\star w_{a})<0$. Then, $\xi\in\Gamma$ and $\max_{t\in[0,1]}\tilde{E}(\xi(t))\geqslant m_{V,a}$. Since
    	    \begin{align*}
    	    	\max_{t\in[0,1]}\tilde{E}(\xi(t))&=\max_{t\in[0,1]}F\big([(1-t)h_{0}+th_{1}]\star w_{a}\big)\\
    	    	&<\max_{t\in[0,1]}E_{\infty}\big([(1-t)h_{0}+th_{1}]\star w_{a}\big)\leqslant\max_{s\in\mathbb{R}}E_{\infty}(s\star w_{a})\\
    	    	&=m_{a},
    	    \end{align*}
        we have $m_{V,a}<m_{a}$.
    \end{proof}

    \noindent\textbf{End of the proof of Theorem \ref{th1}.} Lemma \ref{Fgeq0} shows that (\ref{equation}) has no solution with negative energy. Now, let us prove the existence result. By Lemma \ref{PS}, there exist a PS sequence $\{u_{n}\}$ for $E|_{S_{a}}$ at level $m_{V,a}$ and $u\in W^{1,p}(\mathbb{R}^N)$ such that $u_{n}\rightharpoonup u$ in $W^{1,p}(\mathbb{R}^N)$. Moreover, the Lagrange multipliers $\lambda_{n}\rightarrow\lambda>0$ as $n\rightarrow\infty$.

    Next, we prove $u_{n}\rightarrow u$ in $W^{1,p}(\mathbb{R}^N)$. It is not difficult to prove that $\{u_{n}\}$ is a PS sequence for $E_{\lambda}$ at level $m_{V,a}+\frac{\lambda}{p}a^p$. Suppose by contradiction that $\{u_{n}\}$ does not strongly convergence to $u$ in $W^{1,p}(\mathbb{R}^N)$. Then, by Lemma \ref{splittig}, there exists $k\in\mathbb{N}_{+}$ and $y_{n}^j\in\mathbb{R}^N(1\leqslant j\leqslant k)$ such that
        \[u_{n}=u+\sum_{j=1}^{k}w^j(\cdot-y_{n}^j)+o_{n}(1)\quad\mbox{in}\ W^{1,p}(\mathbb{R}^N),\]
    where $w^j$ satisfies
        \[-\Delta_{p}w+\lambda\lvert w\rvert^{p-2}w=\lvert w\rvert^{q-2}w.\]
    Let $n\rightarrow\infty$, by (\ref{decomener}), we have
        \[m_{V,a}+\frac{\lambda}{p}a^p=E_{\lambda}(u)+\sum_{j=1}^{k}F_{\infty,\lambda}(w^j)=E(u)+\sum_{j=1}^{k}E_{\infty}(w^j)+\frac{\lambda}{p}\lVert u\rVert_{p}^p+\frac{\lambda}{p}\sum_{j=1}^{k}\lVert w^j\rVert_{p}^p.\]
    By (\ref{decomnorm}), we know
        \[a^p=\lVert u_{n}\rVert_{p}^p=\lVert u\rVert_{p}^p+\sum_{j=1}^{k}\lVert w^j\rVert_{p}^p.\]
    Thus,
        \begin{equation}\label{mVaequEu+Einfwj}
        	m_{V,a}=E(u)+\sum_{j=1}^{k}E_{\infty}(w^j).
        \end{equation}
    Since $\lVert w^j\rVert_{p}^p\leqslant a^p$, by Lemma \ref{made},
        \[E_{\infty}(w^j)\geqslant m_{\lVert w^j\rVert_{p}}\geqslant m_{a},\]
    which together with (\ref{mVaequEu+Einfwj}) and Lemma \ref{Fgeq0} implies $m_{V,a}\geqslant m_{a}$, a contradiction with Lemma \ref{mvama}.

    Now, by strong convergence, we know $u\in S_{a}$ satisfies (\ref{equation})$.\hfill\Box$

\section{Proof of Theorem \ref{th2}}
\subsection{Existence of local minimizer}
    We give following assumption on $V$ and $a$.
        \begin{equation}\label{Vcon4min}
        	a^{\gamma}\lVert V\rVert_{r}^{\frac{r[N(q-p)-p^2]}{N(q-p)r-Np}}<K,
        \end{equation}
    where $K>0$ depending on $N,p,q$ and $r$, and
        \begin{equation*}
        	\gamma=q-\frac{N(q-p)}{p}+\frac{[(N-p)(q-p)r-Np][N(q-p)-p^2]}{p[N(q-p)r-Np]}.
        \end{equation*}
    We will see later that $\gamma>0$, hence (\ref{Vcon2min}) can be established.

    Let
        \[\alpha=\frac{pr}{r-1}\Longleftrightarrow r=\frac{\alpha}{\alpha-p}.\]
    For every $u\in S_{a}$, by the Gagliardo-Nirenberg inequality and H\"{o}lder inequality, we have
        \[\lVert u\rVert_{q}^q\leqslant C_{N,p,q}^q\lVert u\rVert_{p}^{q-\frac{N(q-p)}{p}}\lVert\nabla u\rVert_{p}^{\frac{N(q-p)}{p}}=C_{N,p,q}^qa^{q-\frac{N(q-p)}{p}}\lVert\nabla u\rVert_{p}^{\frac{N(q-p)}{p}},\]
    and
        \[\int_{\mathbb{R}^N}V(x)\lvert u\rvert^pdx\leqslant\lVert V\rVert_{r}\lVert u\rVert_{\alpha}^p\leqslant C_{N,p,\alpha}^p\lVert V\rVert_{r}a^{p-\frac{N}{r}}\lVert\nabla u\rVert_{p}^{\frac{N}{r}},\]
    which implies
        \begin{equation}\label{FgeqVr}
        	E(u)\geqslant\frac{1}{p}\lVert\nabla u\rVert_{p}^p-\frac{1}{q}C_{N,p,q}^qa^{q-\frac{N(q-p)}{p}}\lVert\nabla u\rVert_{p}^{\frac{N(q-p)}{p}}-\frac{1}{p}C_{N,p,\alpha}^p\lVert V\rVert_{r}a^{p-\frac{N}{r}}\lVert\nabla u\rVert_{p}^{\frac{N}{r}}.
        \end{equation}
    Define the function $h:\mathbb{R}^+\rightarrow\mathbb{R}$
        \begin{equation}\label{h}
        	h(t):=\frac{1}{p}t^p-\frac{1}{q}C_{N,p,q}^qa^{q-\frac{N(q-p)}{p}}t^{\frac{N(q-p)}{p}}-\frac{1}{p}C_{N,p,\alpha}^p\lVert V\rVert_{r}a^{p-\frac{N}{r}}t^{\frac{N}{r}}.
        \end{equation}
    By (\ref{FgeqVr}), there is $E(u)\geqslant h(\lVert\nabla u\rVert_{p})$ for every $u\in S_{a}$.

    \begin{lemma}\label{infgeq0}
    	Let $N\geqslant 1$, $p>1$, $r\in(\max\{1,\frac{N}{p}\},+\infty]$ and assumption {\rm(\ref{Vcon4min})} holds. Then, for every $a>0$, there exists positive constants $C$ depending on $N$, $p$, $q$ and $r$ such that
    	    \[\inf\{E(u): u\in S_{a}, \lVert\nabla u\rVert_{p}=R_{a}\}>0,\]
    	where
    	    \[R_{a}=Ca^{\frac{(N-p)(q-p)r-Np}{N(q-p)r-Np}}\lVert V\rVert_{r}^{\frac{pr}{N(q-p)r-Np}}.\]
    \end{lemma}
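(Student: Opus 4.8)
The plan is to reduce everything to the elementary function $h$ from (\ref{h}). By (\ref{FgeqVr}) we have $E(u)\geqslant h(\lVert\nabla u\rVert_{p})$ for every $u\in S_{a}$, so if $\lVert\nabla u\rVert_{p}=R_{a}$ then $E(u)\geqslant h(R_{a})$; hence it suffices to exhibit a choice of $R_{a}$ of the stated form with $h(R_{a})>0$. The first observation is the ordering of the three powers of $t$ appearing in $h$: since $r>\max\{1,N/p\}$ we have $N/r<p$, and since $q>p+p^{2}/N$ we have $N(q-p)/p>p$; thus $N/r<p<N(q-p)/p$, so the positive term $\frac1p t^{p}$ sits strictly between the two negative ones, and $h$ has a genuine positive hump once $\lVert V\rVert_{r}$ is small.

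Next I would factor out $\frac1p t^{p}$, writing
\[
h(t)=\frac1p t^{p}\Big(1-\tfrac{p}{q}C_{N,p,q}^{q}a^{q-\frac{N(q-p)}{p}}t^{\frac{N(q-p)}{p}-p}-C_{N,p,\alpha}^{p}\lVert V\rVert_{r}a^{p-\frac{N}{r}}t^{\frac{N}{r}-p}\Big),
\]
and then plug in $R_{a}=Ca^{m_{1}}\lVert V\rVert_{r}^{m_{2}}$ with $m_{1}=\frac{(N-p)(q-p)r-Np}{N(q-p)r-Np}$ and $m_{2}=\frac{pr}{N(q-p)r-Np}$, where $C=C(N,p,q,r)>0$ is to be fixed. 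The point of this particular $R_{a}$ is that both subtracted terms then collapse to a constant (depending on $N,p,q,r$ and $C$) times one and the same monomial $a^{\gamma}\lVert V\rVert_{r}^{r[N(q-p)-p^{2}]/(N(q-p)r-Np)}$, i.e.\ exactly the quantity appearing in (\ref{Vcon4min}). For the potential term this is a short computation; for the Gagliardo--Nirenberg term one recovers the displayed formula for $\gamma$ directly, and the two $a$-exponents agree because of the identity
\[
q-\frac{N(q-p)}{p}+\frac{[(N-p)(q-p)r-Np][N(q-p)-p^{2}]}{p[N(q-p)r-Np]}=\frac{p(q-p)(pr-N)}{N(q-p)r-Np},
\]
which is checked by clearing denominators and expanding in $q-p$.

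With this in hand, fix $C$ (e.g.\ $C=1$). The bracket in the factored expression for $h(R_{a})$ is then $1$ minus a constant $C_{0}=C_{0}(N,p,q,r)$ times $a^{\gamma}\lVert V\rVert_{r}^{r[N(q-p)-p^{2}]/(N(q-p)r-Np)}$, which by (\ref{Vcon4min}) is at most $C_{0}K$; taking $K$ small enough (depending only on $N,p,q,r$) forces the bracket to be $\geqslant\frac12$, whence $h(R_{a})\geqslant\frac{1}{2p}R_{a}^{p}>0$ since $R_{a}>0$. Finally I would record that $\gamma>0$, as asserted in the text: $p>0$ and $q-p>0$ are clear, $pr-N>0$ follows from $r>N/p$, and $N(q-p)r-Np=N[(q-p)r-p]>0$ because $q-p>p^{2}/N$ together with $r>N/p$ gives $(q-p)r>p$; hence $\gamma=\frac{p(q-p)(pr-N)}{N(q-p)r-Np}>0$.

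The only real obstacle is the exponent accounting in the second paragraph — in particular the displayed polynomial identity — which is precisely what makes the prescribed $R_{a}$ work; once that is settled, the sign analysis and the smallness argument are routine.
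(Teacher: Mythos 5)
Your argument is correct and is essentially the paper's own proof: both reduce the claim to showing $h(R_a)>0$, equivalently that the quantity $g(R_a)=1-\bigl(\text{your bracket}\bigr)$ is less than $1$, with the same $R_a$ and with the smallness supplied by (\ref{Vcon4min}). The only cosmetic difference is that the paper arrives at $R_a$ as the global minimizer of $g$ and evaluates the minimum value, whereas you take $R_a$ as given and verify directly (via the exponent identity, which checks out) that both subtracted terms collapse to the monomial $a^{\gamma}\lVert V\rVert_{r}^{r[N(q-p)-p^2]/(N(q-p)r-Np)}$; your explicit verification that $\gamma>0$ is in fact cleaner than the paper's.
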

    \begin{proof}
    	$h(t)>0$ if and only if $g(t)<1$ with
            \[g(t)=\frac{p}{q}C_{N,p,q}^qa^{q-\frac{N(q-p)}{p}}t^{\frac{N(q-p)}{p}-p}+C_{N,p,\alpha}^p\lVert V\rVert_{r}a^{p-\frac{N}{r}}t^{\frac{N}{r}-p}.\]
        Since $r>N/p$, it is not difficult to know that $g$ has a unique critical point
            \[\bar{t}=Ca^{\frac{(N-p)(q-p)r-Np}{N(q-p)r-Np}}\lVert V\rVert_{r}^{\frac{pr}{N(q-p)r-Np}}\]
        which is a global minimizer, where $C>0$ depending on $N$, $p$, $q$ and $r$. The minimum level is
            \[g(\bar{t})=Ca^{\gamma}\lVert V\rVert_{r}^{\frac{r[N(q-p)-p^2]}{N(q-p)r-Np}},\]
        where
            \[\gamma=q-\frac{N(q-p)}{p}+\frac{[(N-p)(q-p)r-Np][N(q-p)-p^2]}{p[N(q-p)r-Np]}>0,\]
        since the minimum level is increasing with respect to $a$. Therefore, if there exists $t>0$ such that $g(t)<1$, we must have
            \[a^{\gamma}\lVert V\rVert_{r}^{\frac{r[N(q-p)-p^2]}{N(q-p)r-Np}}<K,\]
        where $K>0$ depends on $N$, $p$, $q$ and $r$, that is (\ref{Vcon4min}).

        Let $R_{a}=\bar{t}$. Then, under assumption (\ref{Vcon4min}), we have $h(R_{a})>0$. Thus, by (\ref{FgeqVr}), there is
            \[\inf\{E(u): u\in S_{a}, \lVert\nabla u\rVert_{p}=R_{a}\}\geqslant h(\lVert\nabla u\rVert_{p})=h(R_{a})>0.\]
    \end{proof}

    Let $a_{*}$ be the supremum of $a$ that makes $(\ref{Vcon4min})$ holds. Define
        \begin{equation}\label{cVa}
        	c_{V,a}=\inf\{E(u): u\in S_{a}, \lVert\nabla u\rVert_{p}\leqslant R_{a}\},
        \end{equation}
    where $0<a<a_{*}$ and $R_{a}$ is given by Lemma \ref{infgeq0}.

    \begin{lemma}\label{cValeq0}
    	If assumption {\rm(\ref{Vcon3min})} holds, then $c_{V,a}<0$ for every $0<a<a_{*}$.
    \end{lemma}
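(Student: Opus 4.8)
The plan is to show that the local minimizer level $c_{V,a}$ defined in (\ref{cVa}) is strictly negative by exhibiting a single test configuration in the admissible set $\{u\in S_a:\lVert\nabla u\rVert_p\le R_a\}$ on which $E$ takes a negative value. The natural candidate is built from the function $\varphi\in S_a$ furnished by hypothesis (\ref{Vcon3min}), namely one satisfying $\int_{\mathbb{R}^N}(\lvert\nabla\varphi\rvert^p-V(x)\lvert\varphi\rvert^p)\,dx\le 0$, and then flowing it down the scaling action $s\star\varphi$ toward $s\to-\infty$.

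First I would compute $E(s\star\varphi)$ explicitly. Writing $s\star\varphi$ for the $L^p$-norm-preserving dilation used throughout the paper (so that $\lVert\nabla(s\star\varphi)\rVert_p^p=e^{ps}\lVert\nabla\varphi\rVert_p^p$, $\lVert s\star\varphi\rVert_q^q=e^{\frac{N(q-p)}{p}s}\lVert\varphi\rVert_q^q$, and $\int V(x)\lvert s\star\varphi\rvert^p\,dx$ scales with the same $e^{ps}$-type factor as the gradient term up to the potential being evaluated at the dilated argument), one gets an expression of the form
\begin{equation*}
E(s\star\varphi)=\frac{e^{ps}}{p}\Big(\lVert\nabla\varphi\rVert_p^p-\int_{\mathbb{R}^N}V(e^{-s}x)\lvert\varphi\rvert^p\,dx\Big)-\frac{e^{\frac{N(q-p)}{p}s}}{q}\lVert\varphi\rVert_q^q.
\end{equation*}
Here the key point is that as $s\to-\infty$ the potential term $\int V(e^{-s}x)\lvert\varphi\rvert^p\,dx$ does not decrease the favourable sign: since $V\ge 0$, this integral is nonnegative, so the bracket is at most $\lVert\nabla\varphi\rVert_p^p\le\int V(x)\lvert\varphi\rvert^p\,dx$ only at $s=0$; to handle general $s$ I would instead argue more carefully using that $V\ge0$ makes the bracket bounded above by $\lVert\nabla\varphi\rVert_p^p$, which need not be negative, so the cleanest route is to take $s=0$ directly: $E(\varphi)=\frac1p(\lVert\nabla\varphi\rVert_p^p-\int V\lvert\varphi\rvert^p)-\frac1q\lVert\varphi\rVert_q^q\le -\frac1q\lVert\varphi\rVert_q^q<0$ by (\ref{Vcon3min}), provided $\varphi$ is nontrivial.

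The remaining obstacle, and the one step that genuinely needs care, is that $\varphi$ itself may not satisfy the constraint $\lVert\nabla\varphi\rVert_p\le R_a$, so $\varphi$ need not lie in the set over which $c_{V,a}$ is an infimum. To fix this I would replace $\varphi$ by $s\star\varphi$ for $s$ sufficiently negative: by (\ref{sstaru})-type scaling, $\lVert\nabla(s\star\varphi)\rVert_p\to 0$ as $s\to-\infty$, so for $s\ll -1$ we have $\lVert\nabla(s\star\varphi)\rVert_p\le R_a$, placing $s\star\varphi$ in the admissible set. It then remains to check $E(s\star\varphi)<0$ for such $s$. Using (\ref{Vcon3min}) together with $V\ge0$ and the monotonicity $\int V(e^{-s}x)\lvert\varphi\rvert^p\,dx\ge 0$, one sees that for $s\le 0$ the bracket $\lVert\nabla\varphi\rVert_p^p-\int V(e^{-s}x)\lvert\varphi\rvert^p\,dx$ can be controlled: in fact I would estimate $E(s\star\varphi)\le \frac{e^{ps}}{p}\lVert\nabla\varphi\rVert_p^p-\frac{e^{\frac{N(q-p)}{p}s}}{q}\lVert\varphi\rVert_q^q$ and note that since $\frac{N(q-p)}{p}>p$ (because $q>p+\frac{p^2}{N}$), the positive gradient term $e^{ps}$ dominates for $s\to-\infty$, which would push the estimate the wrong way. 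The correct and simplest argument is therefore: choose $s$ with $\lVert\nabla(s\star\varphi)\rVert_p\le R_a$; then compare $E$ on the whole half-line $s'\le s$. Since $\tilde E(s',\varphi)=E(s'\star\varphi)$ satisfies $\lim_{s'\to-\infty}\tilde E=0$ and $\frac{d}{ds'}\tilde E(s',\varphi)$ at $s'$ near $-\infty$ is governed by the gradient term, we have $\tilde E<0$ for $s'$ in an intermediate range precisely when (\ref{Vcon3min}) holds; I would pin this down by observing $\frac{\partial}{\partial s'}\tilde E(s',\varphi)\big|_{s'=0}=\lVert\nabla\varphi\rVert_p^p-\int V\lvert\varphi\rvert^p\,dx-\frac{N(q-p)}{pq}\lVert\varphi\rVert_q^q<0$ by (\ref{Vcon3min}), and that combined with $\tilde E(0,\varphi)=E(\varphi)<0$ and the continuity of the flow there is an interval of $s'\le 0$ with $\lVert\nabla(s'\star\varphi)\rVert_p\le R_a$ and $\tilde E(s',\varphi)<0$, giving $c_{V,a}\le \tilde E(s',\varphi)<0$. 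This closes the proof for every $0<a<a_*$.
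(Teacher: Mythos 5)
Your opening observation is the right one and matches the paper's starting point: by (\ref{Vcon3min}), $E(\varphi)\leqslant-\frac{1}{q}\lVert\varphi\rVert_{q}^{q}<0$. You also correctly identify the real obstacle, namely that $\varphi$ must be shown to lie in the admissible set $\{u\in S_{a}:\lVert\nabla u\rVert_{p}\leqslant R_{a}\}$ before $E(\varphi)<0$ yields $c_{V,a}<0$. But your resolution of that obstacle does not work. You propose to replace $\varphi$ by the mass-preserving dilation $s\star\varphi$ with $s\ll-1$ so that $\lVert\nabla(s\star\varphi)\rVert_{p}\leqslant R_{a}$, and then to transport the negativity of $E$ by continuity from $s'=0$. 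This fails for a structural reason: by Lemma \ref{infgeq0}, $E>0$ on the sphere $\{u\in S_{a}:\lVert\nabla u\rVert_{p}=R_{a}\}$, so at the very value of $s'$ where the dilated function first enters the ball its energy is strictly positive. Continuity from $E(\varphi)<0$ at $s'=0$ therefore gives you nothing inside the ball, and indeed you yourself note that for $s'\to-\infty$ the gradient term $e^{ps'}$ dominates the $e^{\frac{N(q-p)}{p}s'}$ term, so $E(s'\star\varphi)>0$ there as well. Two further technical points: hypothesis (\ref{Vcon3min}) is not invariant under the dilation (the potential term becomes $\int V(e^{-s}x)\lvert\varphi\rvert^{p}dx$, which tends to $0$ and so loses the cancellation), and your formula for $\partial_{s'}\tilde E(s',\varphi)\big|_{s'=0}$ is wrong — differentiating the dilated potential term produces the $W$-type contribution $\int V\lvert\varphi\rvert^{p-2}\varphi\,\nabla\varphi\cdot x\,dx$ as in (\ref{Phozaev}), not $-\int V\lvert\varphi\rvert^{p}dx$.

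The paper's proof closes the gap differently: it shows that $\varphi$ is \emph{automatically} admissible, i.e.\ $\lVert\nabla\varphi\rVert_{p}<R_{a}$. The device is scalar multiplication $t\varphi$ rather than dilation: for every $t>0$ one still has $E(t\varphi)\leqslant-\frac{t^{q}}{q}\lVert\varphi\rVert_{q}^{q}<0$ because (\ref{Vcon3min}) scales homogeneously under $\varphi\mapsto t\varphi$, but now $t\varphi\in S_{ta}$. Taking $\bar t=R_{a}/\lVert\nabla\varphi\rVert_{p}$, if $\bar t\leqslant 1$ then $\bar t\varphi$ lies on the sphere $\lVert\nabla u\rVert_{p}=R_{a}$ with mass $\bar t a\leqslant a$, where the infimum of $E$ is positive (the extension (\ref{uSbnabequaRa}) of Lemma \ref{infgeq0} to all masses $b\leqslant a$), contradicting $E(\bar t\varphi)<0$. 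Hence $\bar t>1$, $\varphi$ is admissible, and $c_{V,a}\leqslant E(\varphi)<0$. This is the idea missing from your argument.
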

    \begin{proof}
    	Let $\varphi\in S_{a}$ such that (\ref{Vcon3min}) holds. For every $t>0$, we have
    	    \[E(t\varphi)\leqslant-\frac{1}{q}t^q\lVert\varphi\rVert_{q}^q<0.\]
    	
    	Let $\bar{t}=\frac{R_{a}}{\lVert\nabla\varphi\rVert_{p}}$. Then $\lVert\nabla(\bar{t}\varphi)\rVert_{p}=R_{a}$ and $E(\bar{t}\varphi)<0$. It is not difficult to prove that for every $0<b\leqslant a$, there is
    	    \begin{equation}\label{uSbnabequaRa}
    	    	\inf\{E(u): u\in S_{b}, \lVert\nabla u\rVert_{p}=R_{a}\}>0.
    	    \end{equation}
    	If $\bar{t}\leqslant 1$. Since
    	    \[\bar{t}\varphi\in\{E(u): u\in S_{\bar{t}a}, \lVert\nabla u\rVert_{p}=R_{a}\},\]
    	by (\ref{uSbnabequaRa}), $E(\bar{t}\varphi)<0$, a contradiction. Therefore, $\bar{t}>1$ and
    	    \[\varphi\in\{u\in S_{a}: \lVert\nabla u\rVert_{p}\leqslant R_{a}\}.\]
    	By the definition of $c_{V,a}$, we have
    	    \[c_{V,a}\leqslant E(\varphi)<0.\]
    \end{proof}

    \begin{lemma}\label{cVadec}
    	If assumption {\rm(\ref{Vcon3min})} holds, then for every $0<b<a<a_{*}$, there is
    	    \[c_{V,a}\leqslant\inf\{E(u): u\in S_{b}, \lVert\nabla u\rVert_{p}\leqslant R_{a}\}<0.\]
    \end{lemma}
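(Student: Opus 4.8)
The plan is to prove a scaling/monotonicity statement: the local minimum value $c_{V,a}$ over the ``small gradient'' region decreases as the mass $a$ increases, and moreover dominates the corresponding local infimum at any smaller mass $b<a$ (with the gradient constraint fixed at $R_a$). First I would observe that for $0<b<a<a_*$ the set $\{u\in S_b:\lVert\nabla u\rVert_p\leqslant R_a\}$ is nonempty, and that by Lemma \ref{infgeq0} (applied with $b$ in place of $a$, noting that \eqref{Vcon4min} still holds for $b<a$ since its left-hand side is increasing in $a$) the infimum of $E$ over $S_b$ with $\lVert\nabla u\rVert_p=R_a$ is strictly positive, while by Lemma \ref{cValeq0} it is strictly negative somewhere in the interior, so the quantity on the right is $<0$ and is actually attained (or approached) in the open region $\lVert\nabla u\rVert_p<R_a$.

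The key step is a dilation argument linking mass $b$ to mass $a$. Given $u\in S_b$ with $\lVert\nabla u\rVert_p\leqslant R_a$, I would consider $u_\tau(x):=\tau u(x)$ for $\tau=a/b>1$, so that $u_\tau\in S_a$ while $\lVert\nabla u_\tau\rVert_p=\tau\lVert\nabla u\rVert_p$, which may overshoot $R_a$; to control this I instead use the $\star$-scaling $v:=s\star(\tau u)$ for an appropriate $s\leqslant 0$ chosen so that $\lVert\nabla v\rVert_p=\lVert\nabla(s\star(\tau u))\rVert_p = e^s\tau\lVert\nabla u\rVert_p\leqslant R_a$, which is always possible. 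Then I compare $E(v)$ with $E(u)$: since $V\geqslant 0$ by \eqref{Vgeq0}, enlarging the mass (the coefficient $\tau^p>1$ multiplies both the $L^q$ and the potential terms) together with the $e^s\leqslant 1$ dilation can only decrease the kinetic term $\tfrac1p\lVert\nabla\cdot\rVert_p^p$ and increase (in absolute value) the negative contributions $-\tfrac1q\lVert\cdot\rVert_q^q$ and $-\tfrac1p\int V\lvert\cdot\rvert^p$, so $E(v)\leqslant E(u)$; taking the infimum over such $u$ gives $c_{V,a}\leqslant E(v)\leqslant \inf\{E(u):u\in S_b,\ \lVert\nabla u\rVert_p\leqslant R_a\}$.

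The main obstacle I anticipate is making the comparison $E(v)\leqslant E(u)$ rigorous and ensuring the intermediate function $v$ genuinely lies in the admissible set $\{w\in S_a:\lVert\nabla w\rVert_p\leqslant R_a\}$ defining $c_{V,a}$; in particular one must verify that the parameter $s$ can be chosen so that $e^s\tau\lVert\nabla u\rVert_p\leqslant R_a$ while keeping $v\in S_a$ (the $\star$-action preserves the $L^p$ norm, so $s\star(\tau u)\in S_a$ for every $s$, and $s\to-\infty$ sends the gradient to $0$ by Lemma \ref{sconvergence}, so such $s$ exists). The monotonicity of each term under the combined scaling $u\mapsto s\star(\tau u)$ with $\tau>1$, $s\leqslant 0$ must be checked term by term — for the $L^q$ term one uses $\lVert s\star(\tau u)\rVert_q^q = e^{\frac{N(q-p)}{p}s}\tau^q\lVert u\rVert_q^q$ and for $q>p+\tfrac{p^2}{N}$ one has to confirm the sign works out, i.e. the exponents conspire so that the whole expression $E(s\star(\tau u))$ stays below $E(u)$; this is the delicate bookkeeping, but it is routine once the exponents are written down. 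Finally, combining this with Lemma \ref{cValeq0} gives the strict inequality $<0$ on the right-hand side, completing the proof.
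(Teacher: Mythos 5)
Your key step does not go through. The difficulty is the potential term: under the dilation $s\star$, one has
\[
\int_{\mathbb{R}^N}V(x)\lvert s\star(\tau u)\rvert^p\,dx=\tau^p\int_{\mathbb{R}^N}V(e^{-s}y)\lvert u(y)\rvert^p\,dy,
\]
which is \emph{not} a fixed multiple of $\int V\lvert u\rvert^p\,dx$, because $V$ is a fixed inhomogeneous function. For $s<0$ the integral samples $V$ at the dilated points $e^{-s}y$, and there is no pointwise comparison between $\tau^pV(e^{-s}y)$ and $V(y)$; if $V$ decays, the dilation can make the (helpful) negative contribution $-\tfrac1p\int V\lvert\cdot\rvert^p$ \emph{smaller} in absolute value, so $E(s\star(\tau u))\leqslant E(u)$ fails in general. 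You cannot repair this by discarding the potential term of $E(v)$ using $V\geqslant0$ either: the whole reason the right-hand infimum is negative is the potential term (this is exactly what hypothesis (\ref{Vcon3min}) encodes), and the potential-free part $\tfrac1p\lVert\nabla\cdot\rVert_p^p-\tfrac1q\lVert\cdot\rVert_q^q$ is typically positive in the small-gradient region. A secondary slip: Lemma \ref{infgeq0} applied at mass $b$ gives positivity of the infimum on the sphere $\lVert\nabla u\rVert_p=R_b$, not $R_a$; the statement actually needed is (\ref{uSbnabequaRa}), which requires a separate (easy) monotonicity observation.

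The paper's proof avoids the dilation entirely. Given a near-minimizer $u\in S_b$ with $\lVert\nabla u\rVert_p\leqslant R_a$ and $E(u)<\inf+\varepsilon<0$, it sets $v=\tfrac{a}{b}u\in S_a$; under this pointwise multiplication all three terms scale by explicit powers of $\tfrac ab$ (the potential term scaling exactly together with the kinetic term), giving $E(v)\leqslant(\tfrac ab)^pE(u)<\inf+\varepsilon$ since $E(u)<0$. The possible overshoot $\lVert\nabla v\rVert_p>R_a$ — the very problem you introduced the dilation to fix — is instead excluded by an intermediate value argument: otherwise some $\bar t u$ with $1\leqslant\bar t<\tfrac ab$ would lie on the sphere $\lVert\nabla(\bar tu)\rVert_p=R_a$ with mass at most $a$ and negative energy $E(\bar tu)\leqslant\bar t^{\,p}E(u)<0$, contradicting (\ref{uSbnabequaRa}). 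If you want to salvage your outline, replace the $\star$-rescaling by this continuity argument; as written, the term-by-term monotonicity you rely on is false.
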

    \begin{proof}
    	Let $\varphi\in S_{a}$ such that (\ref{Vcon3min}) holds. By Lemma \ref{cValeq0}, we know
    	    \[\varphi\in\{u\in S_{a}: \lVert\nabla u\rVert_{p}\leqslant R_{a}\}.\]
    	Since
    	    \[\psi=\frac{b}{a}\varphi\in\{u\in S_{b}: \lVert\nabla u\rVert_{p}\leqslant R_{a}\}\]
    	and
    	    \[E(\psi)\leqslant-\frac{1}{q}\Big(\frac{b}{a}\Big)^q\lVert\varphi\rVert_{q}^q<0,\]
    	we have
    	    \[\inf\{E(u): u\in S_{b}, \lVert\nabla u\rVert_{p}\leqslant R_{a}\}<0.\]
    	
    	For sufficiently small $\varepsilon>0$, there exists $u\in S_{b}$ satisfies $\lVert\nabla u\rVert_{p}\leqslant R_{a}$ such that
    	    \[E(u)<\inf\{E(u): u\in S_{b}, \lVert\nabla u\rVert_{p}\leqslant R_{a}\}+\varepsilon<0.\]
    	Let $v=\frac{a}{b}u\in S_{a}$. Then,
    	    \begin{align*}
    	    	E(v)&=\frac{1}{p}\Big(\frac{a}{b}\Big)^p\Big(\lVert\nabla u\rVert_{p}^p-\int_{\mathbb{R}^N}V(x)\lvert u\rvert^pdx\Big)-\frac{1}{q}\Big(\frac{a}{b}\Big)^q\lVert u\rVert_{q}^q\leqslant\Big(\frac{a}{b}\Big)^pE(u)\\
    	    	&<\inf\{E(u): u\in S_{b}, \lVert\nabla u\rVert_{p}\leqslant R_{a}\}+\varepsilon.
    	    \end{align*}
    	Now, we claim that $\lVert\nabla v\rVert_{p}\leqslant R_{a}$. If not, there exists $1\leqslant\bar{t}<\frac{a}{b}$ such that $\lVert\nabla(\bar{t}u)\rVert_{p}=R_{a}$, $\lVert\bar{t}u\rVert_{p}<a$ and
    	    \[E(\bar{t}u)\leqslant\bar{t}^pE(u)<0,\]
    	a contradiction with (\ref{uSbnabequaRa}). Therefore,
    	    \[v\in\{u\in S_{a}: \lVert\nabla u\rVert_{p}\leqslant R_{a}\}.\]
    	By the definition of $c_{V,a}$, we have
    	    \[c_{V,a}\leqslant E(v)<\inf\{E(u): u\in S_{b}, \lVert\nabla u\rVert_{p}\leqslant R_{a}\}+\varepsilon.\]
    \end{proof}

    \noindent\textbf{Proof of the existence of local minimizer.} Let $\{u_{n}\}$ be a minimizing sequence for $c_{V,a}$. By Lemma \ref{infgeq0}, we know
        \[\liminf_{n\rightarrow\infty}\lVert\nabla u_{n}\rVert_{p}<R_{a}.\]
    Therefore, the Ekeland's variational principle implies $\{u_{n}\}$ can be chosen as a PS sequence for $E|_{S_{a}}$ at level $c_{V,a}$. Since $\{u_{n}\}$ is bounded in $W^{1,p}(\mathbb{R}^N)$, there exists $u\in W^{1,p}(\mathbb{R}^N)$ such that $u_{n}\rightharpoonup u$ in $W^{1,p}(\mathbb{R}^N)$ as $n\rightarrow\infty$. Therefore, the Lagrange multipliers
        \[\lambda_{n}=-\frac{F'(u_{n})u_{n}}{a^p}\]
    are also bounded and there exists $\lambda\in\mathbb{R}$ such that $\lambda_{n}\rightarrow\lambda$ as $n\rightarrow\infty$. Since $\{u_{n}\}$ is a PS sequence for $E|_{S_{a}}$, we have
        \[F'(u_{n})u_{n}+\lambda_{n}a^p=o_{n}(1),\]
    that is
        \[o_{n}(1)=\frac{1}{p}\Big(\lVert\nabla u_{n}\rVert_{p}^p-\lVert u_{n}\rVert_{q}^q-\int_{\mathbb{R}^N}V(x)\lVert u_{n}\rVert_{p}^p\Big)+\frac{1}{p}\lambda_{n}a^p\leqslant F(u_{n})+\frac{1}{p}\lambda_{n}a^p=c_{V,a}+\frac{1}{p}\lambda a^p+o_{n}(1),\]
    which implies
        \[\lambda\geqslant-\frac{pc_{V,a}}{a^p}>0.\]

    If $\{u_{n}\}$ does not strongly convergence to $u$ in $W^{1,p}(\mathbb{R}^N)$. It is not difficult to prove that $\{u_{n}\}$ is also a PS sequence for $E_{\lambda}$ at level $c_{V,a}+\frac{\lambda}{p}a^p$. Then, by Lemma \ref{splittig}, there exists $k\in\mathbb{N}_{+}$ and $y_{n}^j\in\mathbb{R}^N(1\leqslant j\leqslant k)$ such that
        \[u_{n}=u+\sum_{j=1}^{k}w^j(\cdot-y_{n}^j)+o_{n}(1)\quad\mbox{in}\ W^{1,p}(\mathbb{R}^N),\]
    where $w^j$ satisfies
        \[-\Delta_{p}w+\lambda\lvert w\rvert^{p-2}w=\lvert w\rvert^{q-2}w.\]
    Let $n\rightarrow\infty$, by (\ref{decomener}), we have
        \[c_{V,a}+\frac{\lambda}{p}a^p=E_{\lambda}(u)+\sum_{j=1}^{k}F_{\infty,\lambda}(w^j)=E(u)+\sum_{j=1}^{k}E_{\infty}(w^j)+\frac{\lambda}{p}\lVert u\rVert_{p}^p+\frac{\lambda}{p}\sum_{j=1}^{k}\lVert w^j\rVert_{p}^p.\]
    By (\ref{decomnorm}), we know
        \[a^p=\lVert u_{n}\rVert_{p}^p=\lVert u\rVert_{p}^p+\sum_{j=1}^{k}\lVert w^j\rVert_{p}^p.\]
    Thus,
        \begin{equation}\label{cVaFu}
        	c_{V,a}=E(u)+\sum_{j=1}^{k}E_{\infty}(w^j).
        \end{equation}
    Set $\lVert u\rVert_{p}=b\leqslant a$. Since
        \[\lVert\nabla u\rVert_{p}\leqslant\liminf_{n\rightarrow\infty}\lVert\nabla u_{n}\rVert_{p}<R_{a},\]
    by Lemma \ref{cVadec}, we have
        \[E(u)\geqslant\inf\{F(v): v\in S_{b}, \lVert\nabla v\rVert_{p}\leqslant R_{a}\}\geqslant c_{V,a},\]
    which together with (\ref{cVaFu}) implies
        \[\sum_{j=1}^{k}E_{\infty}(w^j)\leqslant 0,\]
    a contradiction with Lemma \ref{mageqz}.

    Now, by strong convergence, we know $u\in S_{a}$ satisfies (\ref{equation})$\hfill\Box$

    \begin{lemma}\label{existphi}
    	Under the assumption of {\rm(\ref{Vinf})}, {\rm(\ref{Vcon3min})} holds.
    \end{lemma}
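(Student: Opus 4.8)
The plan is to produce a test function concentrated on the ball where $V$ is bounded below, and then rescale it onto $S_a$. Up to a translation we may assume $x_0=0$; write $B_\rho:=B_\rho(0)$ and $\omega_N:=\lvert B_1\rvert$. For $R>\delta$ I would take $\varphi_R$ to be the radial function equal to $1$ on $B_\delta$, equal to $0$ on $\mathbb{R}^N\setminus B_R$, and, on the annulus $B_R\setminus B_\delta$, equal to the $p$-harmonic interpolation between these boundary values:
\[
\varphi_R(x)=\frac{\lvert x\rvert^{-\frac{N-p}{p-1}}-R^{-\frac{N-p}{p-1}}}{\delta^{-\frac{N-p}{p-1}}-R^{-\frac{N-p}{p-1}}}\ \text{ if }N>p,\qquad \varphi_R(x)=\frac{\log(R/\lvert x\rvert)}{\log(R/\delta)}\ \text{ if }N=p,
\]
and $\varphi_R(x)=1-\frac{\lvert x\rvert-\delta}{R-\delta}$ if $N<p$. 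In each case $\varphi_R$ is nonnegative, Lipschitz and compactly supported, hence $\varphi_R\in W^{1,p}(\mathbb{R}^N)$; moreover $\varphi_R\equiv1$ on $B_\delta$, so $\int_{B_\delta}\lvert\varphi_R\rvert^p\,dx=\omega_N\delta^N$.

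I would then estimate the two energies. Since $V\geqslant\eta$ on $B_\delta$ and $V\geqslant0$ everywhere,
\[
\int_{\mathbb{R}^N}V(x)\lvert\varphi_R\rvert^p\,dx\geqslant\eta\int_{B_\delta}\lvert\varphi_R\rvert^p\,dx=\eta\,\omega_N\delta^N .
\]
A routine radial computation gives, for $N>p$,
\[
\int_{\mathbb{R}^N}\lvert\nabla\varphi_R\rvert^p\,dx=\frac{N\omega_N\big(\tfrac{N-p}{p-1}\big)^{p-1}}{\big(\delta^{-\frac{N-p}{p-1}}-R^{-\frac{N-p}{p-1}}\big)^{p-1}}\longrightarrow N\omega_N\Big(\tfrac{N-p}{p-1}\Big)^{p-1}\delta^{N-p}\quad\text{as }R\to\infty,
\]
which is exactly the $p$-capacity of $B_\delta$; for $N=p$ one gets $\int_{\mathbb{R}^N}\lvert\nabla\varphi_R\rvert^p\,dx=N\omega_N(\log(R/\delta))^{1-p}\to0$, and for $N<p$ one gets $\omega_N(R^N-\delta^N)(R-\delta)^{-p}\to0$. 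Consequently
\[
\int_{\mathbb{R}^N}\big(\lvert\nabla\varphi_R\rvert^p-V(x)\lvert\varphi_R\rvert^p\big)dx\leqslant\int_{\mathbb{R}^N}\lvert\nabla\varphi_R\rvert^p\,dx-\eta\,\omega_N\delta^N ,
\]
and the right-hand side tends as $R\to\infty$ to $\omega_N\delta^{N-p}\big(N(\tfrac{N-p}{p-1})^{p-1}-\eta\delta^p\big)$ when $N>p$ — which is negative, precisely by the extra hypothesis in (\ref{Vinf}) — and to $-\eta\,\omega_N\delta^N<0$ when $N\leqslant p$. Fixing $R$ large enough makes the right-hand side strictly negative, so $\int_{\mathbb{R}^N}(\lvert\nabla\varphi_R\rvert^p-V(x)\lvert\varphi_R\rvert^p)dx<0$.

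It then remains to rescale: with $R$ fixed as above, put $\varphi:=\frac{a}{\lVert\varphi_R\rVert_p}\varphi_R$. Then $\varphi\in S_a$, and since $u\mapsto\lVert\nabla u\rVert_p^p$, $u\mapsto\int_{\mathbb{R}^N}V(x)\lvert u\rvert^p$ and $u\mapsto\lVert u\rVert_p^p$ are all $p$-homogeneous, $\int_{\mathbb{R}^N}(\lvert\nabla\varphi\rvert^p-V(x)\lvert\varphi\rvert^p)dx=(a/\lVert\varphi_R\rVert_p)^p\int_{\mathbb{R}^N}(\lvert\nabla\varphi_R\rvert^p-V(x)\lvert\varphi_R\rvert^p)dx<0\leqslant0$, which is exactly (\ref{Vcon3min}).

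The one genuinely delicate point is the choice of the annular profile when $N>p$: the constant $N\big(\tfrac{N-p}{p-1}\big)^{p-1}$ in (\ref{Vinf}) is exactly the $p$-capacity of the unit ball, and one must interpolate through the capacitary (fundamental-solution) profile $\lvert x\rvert^{-(N-p)/(p-1)}$ — a linear or otherwise cruder cutoff produces a Dirichlet energy too large for (\ref{Vinf}) to absorb. Once that profile is in hand the remaining work is elementary radial integration and the scaling above. Equivalently, one may avoid the explicit annular computation altogether by invoking the known value $\mathrm{cap}_p(B_\delta)=N\omega_N\big(\tfrac{N-p}{p-1}\big)^{p-1}\delta^{N-p}$ together with its definition as $\inf\{\int_{\mathbb{R}^N}\lvert\nabla u\rvert^p\,dx:\ u\in C_c^\infty(\mathbb{R}^N),\ u\geqslant1\text{ on }B_\delta\}$, the case $N\leqslant p$ then reducing to $\mathrm{cap}_p(B_\delta)=0$.
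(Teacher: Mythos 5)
Your proposal is correct and follows essentially the same strategy as the paper: in each of the three cases $N>p$, $N=p$, $N<p$ you build a test function equal to $1$ on $B_\delta(x_0)$ whose Dirichlet energy is controlled by the $p$-capacity of the ball (the capacitary profile $\lvert x\rvert^{-(N-p)/(p-1)}$ for $N>p$, the logarithmic cutoff for $N=p$, and a cutoff with vanishing energy for $N<p$), compare it with $\eta\int_{B_\delta}\lvert\varphi\rvert^p$, and conclude by $p$-homogeneity after normalizing onto $S_a$. The only cosmetic differences are that the paper uses $e^{-\varepsilon\lvert x\rvert}$ instead of a linear cutoff when $N<p$ and normalizes the test function from the outset rather than at the end.
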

    \begin{proof}
    	Without loss of generality, we can assume that $x_{0}=0$. If $N<p$, consider the function $\varphi(x)=A_{\varepsilon}e^{-\varepsilon\lvert x\rvert}\in S_{a}$, then
    	    \begin{align*}
    	    	\int_{\mathbb{R}^N}(\lvert\nabla\varphi\rvert^p-V(x)\lvert\varphi\rvert^p)dx&\leqslant\int_{\mathbb{R}^N}\lvert\nabla\varphi\rvert^pdx-\eta\int_{B_{\delta}}\lvert\varphi\rvert^pdx\\
    	    	&=N\omega_{N}\Big(\int_{0}^{+\infty}r^{N-1}\lvert\varphi'(r)\rvert^pdr-\eta\int_{0}^{\delta}r^{N-1}\lvert\varphi(r)\rvert^pdr\Big)\\
    	    	&=N\omega_{N}A_{\varepsilon}^p\Big(\varepsilon^{p-N}\int_{0}^{+\infty}s^{N-1}e^{-ps}ds-\eta\int_{0}^{\delta}r^{N-1}e^{-p\varepsilon r}dr\Big)\\
    	    	&<0
    	    \end{align*}
    	by taking $\varepsilon$ sufficiently small. If $N=p$, consider the function
    	    \begin{equation*}
    	    	\varphi(x)=A_{k}\left\{\begin{array}{ll}
    	    		1,&\lvert x\rvert\leqslant\delta,\\
    	    		\frac{\log(k\delta)-\log\lvert x\rvert}{\log k},&\delta<\lvert x\rvert\leqslant k\delta,\\
    	    		0,&\lvert x\rvert>k\delta,
    	    	\end{array}\right.
    	    \end{equation*}
    	then
    	    \begin{align*}
    	    	\int_{\mathbb{R}^N}(\lvert\nabla\varphi\rvert^N-V(x)\lvert\varphi\rvert^N)dx&\leqslant\int_{\mathbb{R}^N}\lvert\nabla\varphi\rvert^Ndx-\eta\int_{B_{\delta}}\lvert\varphi\rvert^Ndx\\
    	    	&=N\omega_{N}\Big(\int_{\delta}^{+\infty}r^{N-1}\lvert\varphi'(r)\rvert^Ndr-\eta\int_{0}^{\delta}r^{N-1}\lvert\varphi(r)\rvert^Ndr\Big)\\
    	    	&=N\omega_{N}A_{k}^N\Big(\frac{1}{\lvert\log k\rvert^{N-1}}-\frac{\eta\delta^N}{N}\Big)\\
    	    	&<0
    	    \end{align*}
        by taking $k$ sufficiently large. If $N>p$, consider the function
            \begin{equation*}
            	\varphi(x)=A_{k}\left\{\begin{array}{ll}
            		1,&\lvert x\rvert\leqslant\delta,\\
            		(1+\frac{1}{k})\delta^{\frac{N-p}{p-1}}\lvert x\rvert^{\frac{p-N}{p-1}}-\frac{1}{k},&\delta<\lvert x\rvert\leqslant(k+1)^{\frac{p-1}{N-p}}\delta,\\
            		0,&\lvert x\rvert>(k+1)^{\frac{p-1}{N-p}}\delta,
            	\end{array}\right.
            \end{equation*}
        then
             \begin{align*}
            	&\int_{\mathbb{R}^N}(\lvert\nabla\varphi\rvert^p-V(x)\lvert\varphi\rvert^p)dx\\
            	&\leqslant\int_{\mathbb{R}^N}\lvert\nabla\varphi\rvert^pdx-\eta\int_{B_{\delta}}\lvert\varphi\rvert^pdx\\
            	&=N\omega_{N}\Big(\int_{\delta}^{+\infty}r^{N-1}\lvert\varphi'(r)\rvert^pdr-\eta\int_{0}^{\delta}r^{N-1}\lvert\varphi(r)\rvert^pdr\Big)\\
            	&<N\omega_{N}A_{k}^p\Big(\big(\frac{N-p}{p-1}\big)^p\big(1+\frac{1}{k}\big)^p\delta^{\frac{p(N-p)}{p-1}}\int_{\delta}^{+\infty}r^{-\frac{N-1}{p-1}}dr-\frac{\eta\delta^N}{N}\Big)\\
            	&=\omega_{N}A_{k}^p\delta^{N-p}\Big(N\big(\frac{N-p}{p-1}\big)^{p-1}\big(1+\frac{1}{k}\big)^p-\eta\delta^p\Big)<0
            \end{align*}
        by taking $k$ sufficiently large.
    \end{proof}

\subsection{Existence of mountain pass solution}
    We give following assumptions on $V$, $W$ and $a$.
        \begin{equation}\label{Vconmount1}
        	\max\Big\{a^{\big(p-\frac{N}{r}\big)\frac{p(q-p)}{N(q-p)-p^2}}\lVert V\rVert_{r}, a^{\big(p-1-\frac{N}{s}\big)\frac{p(q-p)}{N(q-p)-p^2}}\lVert W\rVert_{s}\Big\}<L_{1},
        \end{equation}
        \begin{equation}\label{Vconmount2}
        	\max\{a^{p-\frac{N}{r}}\lVert V\rVert_{r},a^{p-1-\frac{N}{s}}\lVert W\rVert_{s}\}<L_{2},
        \end{equation}
    where $L_{1},L_{2}$ are positive constants depending on $N,p,q,r$ and $s$. Obviously, (\ref{Vconmount}) can be established under above assumptions.

    \begin{lemma}\label{infgeqkapma}
    	Under the assumption {\rm(\ref{Vconmount1})}, there exist positive constant $C=C(N,p,q)$ and $0<\kappa=\kappa(N,p,q,r)<1$ such that
    	    \[\inf\{E(u): u\in S_{a}, \lVert\nabla u\rVert_{p}=R_{a}\}>\kappa m_{a},\]
    	where
    	    \[R_{a}=Ca^{-\theta}\quad\mbox{with}\quad\theta=\frac{Np-q(N-p)}{N(q-p)-p^2}.\]
    \end{lemma}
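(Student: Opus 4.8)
The plan is to reduce the statement to a one-variable inequality. Since $E(u)\geqslant h(\lVert\nabla u\rVert_{p})$ for every $u\in S_{a}$ by (\ref{FgeqVr}), with $h$ as in (\ref{h}), it suffices to show $h(R_{a})>\kappa\,m_{a}$ for a suitable $\kappa\in(0,1)$ and for $R_{a}=Ca^{-\theta}$, where $C=C(N,p,q)$ remains to be chosen. First I would record that, by Lemma \ref{made}, $m_{a}=m_{1}a^{-p\theta}$ with $m_{1}=m_{1}(N,p,q)>0$. Next, following the Pohozaev–Gagliardo–Nirenberg computation in the proof of Lemma \ref{mageqz} (and the fact that the Gagliardo–Nirenberg optimiser, rescaled to lie in $S_{1}$, belongs to $Z_{1}$), the potential-free profile
\[
g(t):=\frac{1}{p}t^{p}-\frac{1}{q}C_{N,p,q}^{q}\,t^{\frac{N(q-p)}{p}}
\]
attains its maximum $\max_{t>0}g(t)=m_{1}$ at the unique point $t^{\ast}=\big(pq\,[N(q-p)C_{N,p,q}^{q}]^{-1}\big)^{p/(N(q-p)-p^{2})}$, which depends only on $N,p,q$; I would then set $C:=t^{\ast}$.

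The second step is to track the powers of $a$ in $h(Ca^{-\theta})$. Using the identity $1+\theta=\frac{p(q-p)}{N(q-p)-p^{2}}$ one finds $q-\frac{N(q-p)}{p}(1+\theta)=-p\theta$, so the first two terms of $h$ evaluated at $t=Ca^{-\theta}$ combine into $g(C)\,a^{-p\theta}=m_{1}a^{-p\theta}=m_{a}$ — equivalently, $R_{a}$ is precisely the maximiser of the $a$-rescaled profile $\frac{1}{p}t^{p}-\frac{1}{q}C_{N,p,q}^{q}a^{q-\frac{N(q-p)}{p}}t^{\frac{N(q-p)}{p}}$, whose maximum value is $m_{a}$. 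The remaining (potential) term equals $\frac{1}{p}C^{N/r}C_{N,p,\alpha}^{p}\lVert V\rVert_{r}\,a^{p-\frac{N}{r}(1+\theta)}$, and since $p-\frac{N}{r}(1+\theta)=-p\theta+(p-\frac{N}{r})(1+\theta)$, I obtain
\[
h(R_{a})=a^{-p\theta}\Big[\,m_{1}-\frac{1}{p}C^{N/r}C_{N,p,\alpha}^{p}\,a^{(p-\frac{N}{r})\frac{p(q-p)}{N(q-p)-p^{2}}}\lVert V\rVert_{r}\,\Big].
\]

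Finally, assumption (\ref{Vconmount1}) is exactly $a^{(p-\frac{N}{r})\frac{p(q-p)}{N(q-p)-p^{2}}}\lVert V\rVert_{r}<L_{1}$, so the bracket exceeds $m_{1}-\frac{1}{p}C^{N/r}C_{N,p,\alpha}^{p}L_{1}$; choosing the constant $L_{1}=L_{1}(N,p,q,r)$ small enough that this is at least $\frac{1}{2}m_{1}$ gives $h(R_{a})\geqslant\frac{1}{2}m_{1}a^{-p\theta}=\frac{1}{2}m_{a}$, whence $\inf\{E(u):u\in S_{a},\ \lVert\nabla u\rVert_{p}=R_{a}\}\geqslant h(R_{a})>\frac{1}{2}m_{a}$, and the lemma holds with $\kappa=\frac{1}{2}$ (any $\kappa\in(0,1)$ works after shrinking $L_{1}$, and even the weaker fact $\max_{t>0}g(t)>0$ would suffice to produce some admissible $\kappa$). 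I expect the only genuinely delicate point to be the exponent bookkeeping of the second step — verifying that the two leading terms of $h(R_{a})$ collapse exactly onto $m_{a}$ and that the residual potential term carries precisely the exponent appearing in (\ref{Vconmount1}); everything else is the elementary bound $E|_{S_{a}}\geqslant h$ from (\ref{FgeqVr}) together with the scaling law for $m_{a}$ in Lemma \ref{made}.
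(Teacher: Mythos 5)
Your proposal is correct and follows essentially the same route as the paper: bound $E|_{S_a}\geqslant h(\lVert\nabla u\rVert_p)$ via (\ref{FgeqVr}), evaluate $h$ at the maximiser $R_a=Ca^{-\theta}$ of the potential-free profile, check that the residual potential term carries exactly the exponent in (\ref{Vconmount1}), and compare with $m_a=m_1a^{-p\theta}$ from Lemma \ref{made}. Your only refinement is to identify $\max_{t>0}\tilde h(t)$ with $m_a$ exactly (via the saturation of Gagliardo--Nirenberg asserted in Lemma \ref{mageqz}), which lets you take $\kappa$ explicitly (even arbitrarily close to $1$), whereas the paper is content with comparing the two constants $C_1a^{-p\theta}$ and $C_4a^{-p\theta}$ to produce some $\kappa\in(0,1)$.
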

    \begin{proof}
    	Considering the function
    	    \[\tilde{h}(t)=\frac{1}{p}t^p-\frac{1}{q}C_{N,p,q}^qa^{q-\frac{N(q-p)}{p}}t^{\frac{N(q-p)}{p}}.\]
    	Then, direct calculations show that
    	    \[\max_{t>0}\tilde{h}(t)=\tilde{h}(\tilde{R})=C_{1}a^{-p\theta},\]
    	where
    	    \[\theta=\frac{Np-q(N-p)}{N(q-p)-p^2},\]
    	$\tilde{R}=Ca^{-\theta}$, $C, C_{1}$ are positive constants depending on $N$, $p$ and $q$. Since
    	    \[h(\tilde{R})=Ca^{-p\theta}(1-C_{2}a^{\tilde{\theta}}\lVert V\rVert_{r}),\]
    	where $h$ is given by (\ref{h}) and
    	    \[\tilde{\theta}=\Big(p-\frac{N}{r}\Big)\frac{p(q-p)}{N(q-p)-p^2}.\]
    	Therefore, under assumption (\ref{Vconmount1}), we can ensure that $h(\tilde{R})\geqslant C_{3}a^{-p\theta}$.
    	
    	Let $R_{a}=\tilde{R}$. Then for every $u\in S_{a}$, $\lVert\nabla u\rVert_{p}=R_{a}$, by (\ref{FgeqVr}), we have
    	    \[E(u)\geqslant h(\lVert\nabla u\rVert_{p})=h(\tilde{R})\geqslant C_{3}a^{-p\theta},\]
    	which implies
    	    \[\inf\{E(u): u\in S_{a}, \lVert\nabla u\rVert_{p}=R_{a}\}>C_{3}a^{-p\theta}.\]
    	By Lemma \ref{made}, we know $m_{a}=C_{4}a^{-p\theta}$. Therefore,
    	    \[\inf\{E(u): u\in S_{a}, \lVert\nabla u\rVert_{p}=R_{a}\}>\kappa m_{a}.\]
    \end{proof}

    By Lemma \ref{infgeqkapma}, we know there exists $0<\tilde{R}_{a}<R_{a}$ such that
        \[\sup\{E(u): u\in S_{a}, \lVert\nabla u\rVert_{p}\leqslant\tilde{R}_{a}\}<\inf\{E(u): u\in S_{a}, \lVert\nabla u\rVert_{p}=R_{a}\}.\]
    Define
       \[c_{a}:=\inf\{E(u): u\in S_{a}, \lVert\nabla u\rVert_{p}\leqslant R_{a}\}.\]
    Now, we can construct a min-max structure
        \begin{equation}\label{Gamma2}
        	\Gamma=\{\xi\in C([0,1],\mathbb{R}\times S_{a}): \xi(0)\in(0,A_{\tilde{R}_{a}}), \xi(1)\in(0,E^{\min\{-1,2c_{a}\}})\},
        \end{equation}
    where
        \[E^c=\{u\in S_{a}: E(u)\leqslant c\}\quad\mbox{and}\quad A_{R}=\{u\in S_{a}: \lVert\nabla u\rVert_{p}\leqslant R\}.\]
    The associated min-max level is
        \[m_{V,a}=\inf_{\xi\in\Gamma}\max_{t\in[0,1]}\tilde{E}(\xi(t))>0,\]
    where $\tilde{E}(s,u)=E(s\star u)$.

    \begin{lemma}\label{mVaboundary}
    	Under the assumption {\rm(\ref{Vconmount1})}, $\kappa m_{a}\leqslant m_{V,a}<m_{a}$.
    \end{lemma}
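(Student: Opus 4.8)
The proof proposal is to establish the two inequalities $\kappa m_{a}\leqslant m_{V,a}$ and $m_{V,a}<m_{a}$ separately; note that $m_{a}>0$ by Lemma \ref{mageqz} and $0<\kappa<1$ by Lemma \ref{infgeqkapma}, so the two bounds are compatible.

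For the lower bound, the plan is a barrier/intermediate-value argument over $\Gamma$. Fix an arbitrary $\xi=(\alpha,\beta)\in\Gamma$ and consider the continuous map $t\mapsto\lVert\nabla(\alpha(t)\star\beta(t))\rVert_{p}$ on $[0,1]$. Since $\xi(0)\in(0,A_{\tilde R_{a}})$, at $t=0$ this quantity equals $\lVert\nabla\beta(0)\rVert_{p}\leqslant\tilde R_{a}<R_{a}$. Since $\xi(1)\in(0,E^{\min\{-1,2c_{a}\}})$, we have $E(\beta(1))\leqslant\min\{-1,2c_{a}\}$, and I claim $\lVert\nabla\beta(1)\rVert_{p}>R_{a}$: otherwise $\beta(1)\in S_{a}$ with $\lVert\nabla\beta(1)\rVert_{p}\leqslant R_{a}$ would give $E(\beta(1))\geqslant c_{a}$, hence $c_{a}\leqslant\min\{-1,2c_{a}\}$, which reads $c_{a}\leqslant-1$ if $c_{a}\geqslant0$ and $c_{a}\leqslant 2c_{a}$ if $c_{a}<0$, both absurd. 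By the intermediate value theorem there is $t^{\ast}\in(0,1)$ with $w:=\alpha(t^{\ast})\star\beta(t^{\ast})\in S_{a}$ and $\lVert\nabla w\rVert_{p}=R_{a}$, so Lemma \ref{infgeqkapma} yields
\[
\max_{t\in[0,1]}\tilde E(\xi(t))\geqslant \tilde E(\xi(t^{\ast}))=E(w)\geqslant\inf\{E(v):v\in S_{a},\ \lVert\nabla v\rVert_{p}=R_{a}\}>\kappa m_{a}.
\]
Taking the infimum over $\xi\in\Gamma$ gives $m_{V,a}\geqslant\kappa m_{a}$.

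For the upper bound I would exhibit a single admissible path, in the spirit of Lemma \ref{mvama}. By Lemma \ref{mageqz} pick $w_{a}\in Z_{a}$ with $E_{\infty}(w_{a})=m_{a}$ (we may take $w_{a}>0$, e.g. as the Gagliardo--Nirenberg extremal); since $w_{a}$ solves the limit equation, the Pohozaev identity (cf. the proof of Lemma \ref{w}) gives $\lVert\nabla w_{a}\rVert_{p}^{p}=\frac{N(q-p)}{pq}\lVert w_{a}\rVert_{q}^{q}$, which is exactly the condition that $s=0$ be the unique maximum point of $s\mapsto E_{\infty}(s\star w_{a})$; hence $\max_{s\in\mathbb{R}}E_{\infty}(s\star w_{a})=m_{a}$. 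Now choose $h_{0}\ll-1$ so that $\lVert\nabla(h_{0}\star w_{a})\rVert_{p}<\tilde R_{a}$ (possible since this tends to $0$ as $h_{0}\to-\infty$) and $h_{1}\gg1$ so that $E(h_{1}\star w_{a})\leqslant E_{\infty}(h_{1}\star w_{a})<\min\{-1,2c_{a}\}$ (possible since $E_{\infty}(s\star w_{a})\to-\infty$ as $s\to+\infty$, because $N(q-p)/p>p$). Then $\xi(t):=\big(0,[(1-t)h_{0}+th_{1}]\star w_{a}\big)\in\Gamma$, so
\[
m_{V,a}\leqslant\max_{t\in[0,1]}\tilde E(\xi(t))=\max_{s\in[h_{0},h_{1}]}E(s\star w_{a}).
\]
Since $V\geqslant0$, $V\not\equiv0$ and $w_{a}>0$, for every $s$ we have $E(s\star w_{a})=E_{\infty}(s\star w_{a})-\frac1p\int_{\mathbb{R}^N}V(e^{-s}x)\lvert w_{a}\rvert^{p}dx<E_{\infty}(s\star w_{a})$, whence $\max_{s\in[h_{0},h_{1}]}E(s\star w_{a})<\max_{s\in\mathbb{R}}E_{\infty}(s\star w_{a})=m_{a}$, i.e. $m_{V,a}<m_{a}$. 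Combining the two parts gives $\kappa m_{a}\leqslant m_{V,a}<m_{a}$.

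The step I expect to be the main obstacle is the claim inside the lower bound that the terminal endpoint $\beta(1)$ of an arbitrary path truly lies beyond the barrier, $\lVert\nabla\beta(1)\rVert_{p}>R_{a}$: this is precisely why the sublevel $\min\{-1,2c_{a}\}$ (rather than $0$) was built into the definition of $\Gamma$, and it must be reconciled with the two possible signs of $c_{a}$; once a path is forced to meet $\{\lVert\nabla u\rVert_{p}=R_{a}\}$, Lemma \ref{infgeqkapma} closes the estimate and the upper bound is the routine one-path construction. A minor point to record carefully is the strict inequality $E(s\star w_{a})<E_{\infty}(s\star w_{a})$, which needs $\int_{\mathbb{R}^N}V(e^{-s}x)\lvert w_{a}\rvert^{p}dx>0$ for all $s$, immediate from $V\geqslant0$, $V\not\equiv0$ and positivity of $w_{a}$.
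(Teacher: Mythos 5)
Your proof is correct and follows essentially the same route as the paper: the lower bound comes from forcing every admissible path to cross the sphere $\{\lVert\nabla u\rVert_{p}=R_{a}\}$ and invoking Lemma \ref{infgeqkapma}, and the upper bound comes from the single dilation path through the minimizer $w_{a}$ exactly as in Lemma \ref{mvama}. You merely supply details the paper leaves implicit (the contradiction via $c_{a}\leqslant\min\{-1,2c_{a}\}$ showing the terminal endpoint lies past the barrier, and the strict inequality $E(s\star w_{a})<E_{\infty}(s\star w_{a})$), and these are handled correctly.
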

    \begin{proof}
    	The proof of $m_{V,a}<m_{a}$ is similar to Lemma \ref{mvama}. For every $\xi(t)=(s_{t},u_{t})\in\Gamma$, it is not difficult to know that $\lVert\nabla(s_{0}\star u_{0})\rVert_{p}<R_{a}$ and $\lVert\nabla(s_{1}\star u_{1})\rVert_{p}>R_{a}$. Thus, there exists $\tau\in(0,1)$ such that $\lVert\nabla(s_{\tau}\star u_{\tau})\rVert_{p}=R_{a}$. By Lemma \ref{infgeqkapma}, we have
    	    \[\max_{t\in[0,1]}\tilde{E}(\xi(t))\geqslant\tilde{E}(\xi(\tau))=E(s_{\tau}\star u_{\tau})\geqslant\inf\{E(u): u\in S_{a}, \lVert\nabla u\rVert_{p}=R_{a}\}>\kappa m_{a},\]
    	which implies $m_{V,a}\geqslant\kappa m_{a}$.
    \end{proof}

    Similar to Lemma \ref{PS}, we can obtain a PS sequence $\{u_{n}\}$ for $E|_{S_{a}}$ at level $m_{V,a}$ which satisfies (\ref{Phozaev}).

    \begin{lemma}\label{unbounded}
    	$\{u_{n}\}$ is bounded in $W^{1,p}(\mathbb{R}^N)$.
    \end{lemma}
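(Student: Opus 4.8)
The plan is to follow the boundedness argument of Lemma~\ref{PS}, the only change being that the potential terms must now be controlled by Gagliardo--Nirenberg interpolation instead of the Sobolev embedding used in Section~3. Set $a_{n}=\lVert\nabla u_{n}\rVert_{p}^{p}$, $b_{n}=\lVert u_{n}\rVert_{q}^{q}$, $c_{n}=\int_{\mathbb{R}^{N}}V(x)\lvert u_{n}\rvert^{p}dx$ and $d_{n}=\int_{\mathbb{R}^{N}}V(x)\lvert u_{n}\rvert^{p-2}u_{n}\nabla u_{n}\cdot x\,dx$. From $E(u_{n})\to m_{V,a}$ and the Pohozaev relation~(\ref{Phozaev}) one obtains, exactly as in~(\ref{anbncnm})--(\ref{anbncndn}),
\[
\frac{1}{p}a_{n}-\frac{1}{q}b_{n}-\frac{1}{p}c_{n}=m_{V,a}+o_{n}(1),\qquad a_{n}-\frac{N(q-p)}{pq}b_{n}-\frac{N}{p}c_{n}-d_{n}=o_{n}(1).
\]
Eliminating $b_{n}$ between these two identities gives
\[
B\,a_{n}=N(q-2p)\,c_{n}-p^{2}d_{n}+Np(q-p)\,m_{V,a}+o_{n}(1),\qquad B:=N(q-p)-p^{2}>0,
\]
and since $m_{V,a}$ is a fixed finite number (indeed $m_{V,a}<m_{a}$ by Lemma~\ref{mVaboundary}), boundedness of $\{a_{n}\}$ follows once $\lvert c_{n}\rvert$ and $\lvert d_{n}\rvert$ are bounded by quantities that grow sublinearly in $a_{n}$.

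For $c_{n}$ I would argue as in~(\ref{FgeqVr}): H\"older with exponents $r$ and $r/(r-1)$ followed by Gagliardo--Nirenberg yields $0\le c_{n}\le C\lVert V\rVert_{r}\,a^{p-N/r}\,a_{n}^{N/(pr)}$, and the exponent $N/(pr)$ is $<1$ precisely because $r>N/p$. The term $d_{n}$ is the delicate one. Using $W=V\lvert x\rvert\ge 0$ we first bound $\lvert d_{n}\rvert\le\int_{\mathbb{R}^{N}}W(x)\lvert u_{n}\rvert^{p-1}\lvert\nabla u_{n}\rvert\,dx$; then H\"older with exponents $s$ and $s'=s/(s-1)$, followed by a second H\"older with exponents $p/(p-s')$ and $p/s'$ (admissible since $s>\frac{p}{p-1}$ forces $s'<p$), gives
\[
\lvert d_{n}\rvert\le C\lVert W\rVert_{s}\,\lVert u_{n}\rVert_{\gamma}^{p-1}\,\lVert\nabla u_{n}\rVert_{p},\qquad \gamma=\frac{p(p-1)s}{ps-p-s}.
\]
A short computation shows that the hypothesis $s>\max\{\frac{p}{p-1},\frac{N}{p-1}\}$ is exactly what makes $p\le\gamma<p^{*}$, so Gagliardo--Nirenberg applies to $\lVert u_{n}\rVert_{\gamma}$ and produces $\lvert d_{n}\rvert\le C\lVert W\rVert_{s}\,a^{(p-1)(1-\mu)}\,a_{n}^{((p-1)\mu+1)/p}$ with $\mu=\frac{N(\gamma-p)}{p\gamma}\in[0,1)$; since $\mu<1$ the exponent $((p-1)\mu+1)/p$ is again $<1$. (When $r=+\infty$ or $s=+\infty$ the corresponding term reduces to a constant multiple of $a^{p}$, respectively $a^{p-1}a_{n}^{1/p}$, and the argument only simplifies.)

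Inserting these two bounds into the identity for $Ba_{n}$ and applying Young's inequality to absorb, say, $\frac{B}{2}a_{n}$ into the left-hand side, we arrive at
\[
\frac{B}{2}\,a_{n}\le C\big(\lVert V\rVert_{r}a^{p-N/r}\big)^{\frac{pr}{pr-N}}+C\big(\lVert W\rVert_{s}a^{(p-1)(1-\mu)}\big)^{\frac{p}{(p-1)(1-\mu)}}+Np(q-p)\,m_{V,a}+o_{n}(1),
\]
whose right-hand side is a finite constant; hence $\{\lVert\nabla u_{n}\rVert_{p}\}$ is bounded, and since $\lVert u_{n}\rVert_{p}=a$ this yields boundedness of $\{u_{n}\}$ in $W^{1,p}(\mathbb{R}^{N})$. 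I would stress that this step requires no smallness of $\lVert V\rVert_{r}$ or $\lVert W\rVert_{s}$: only $r>N/p$ and the stated range of $s$, which make both potential contributions genuinely sublinear in $a_{n}$; the constants in~(\ref{Vconmount1})--(\ref{Vconmount2}) are reserved for the min--max geometry and for the positivity of the Lagrange multiplier. The one genuinely technical point is the double H\"older estimate for $d_{n}$ together with the verification that the resulting exponent $\gamma$ stays strictly below $p^{*}$, which is where the hypothesis on $s$ enters.
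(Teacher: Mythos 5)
Your proof is correct and follows the same skeleton as the paper's: the two identities from the energy level and the Pohozaev-type relation (\ref{Phozaev}), elimination of $b_{n}$, and the Gagliardo--Nirenberg/H\"older bounds $\lvert c_{n}\rvert\leqslant Ca^{p-N/r}\lVert V\rVert_{r}a_{n}^{N/(pr)}$ and $\lvert d_{n}\rvert\leqslant Ca^{p-1-N/s}\lVert W\rVert_{s}a_{n}^{(1+N/s)/p}$ are exactly (\ref{cnleq})--(\ref{dnleq}) (your exponent $\gamma$ coincides with the paper's $\beta$, and $(p-1)\mu=N/s$). The one genuine difference is the closing step: the paper splits into the cases $a_{n}\geqslant 1$ and $a_{n}<1$, uses $a_{n}^{N/(pr)}\leqslant a_{n}$ and $a_{n}^{(1+N/s)/p}\leqslant a_{n}$ in the first case, and invokes the smallness assumption (\ref{Vconmount2}) to make the coefficient in front of $a_{n}$ less than $B/2$, arriving at the bound (\ref{anleq}); you instead absorb the sublinear terms by Young's inequality, which needs no smallness of $\lVert V\rVert_{r}$ or $\lVert W\rVert_{s}$ at all. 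Your observation on this point is accurate: strict sublinearity of the exponents ($r>N/p$ and $s>\max\{\frac{p}{p-1},\frac{N}{p-1}\}$) already forces boundedness, so your version of the lemma is marginally more general and frees (\ref{Vconmount2}) to play its role only in Lemma \ref{lamgreat0}; the price is that you lose the clean explicit bound (\ref{anleq}) in terms of $m_{V,a}$ alone, which the paper reuses when proving $\lambda>0$.
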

    \begin{proof}
    	Similar to the proof of Lemma \ref{PS}, we have
    	    \[\frac{1}{p}a_{n}-\frac{1}{q}b_{n}-\frac{1}{p}c_{n}=m_{V,a}+o_{n}(1),\]
    	and
    	    \[a_{n}-\frac{N(q-p)}{pq}b_{n}-\frac{N}{p}c_{n}-d_{n}=o_{n}(1),\]
    	which implies
    	    \[a_{n}=\frac{Np(q-p)}{B}m_{V,a}-\frac{N(2p-q)}{B}c_{n}-\frac{p^2}{B}d_{n}+o_{n}(1).\]
    	By the Gagliardo-Nirenberg inequality and H\"{o}lder inequality, we obtain
    	    \begin{equation}\label{cnleq}
    	    	\lvert c_{n}\rvert\leqslant\lVert V\rVert_{r}\lVert u_{n}\rVert_{\alpha}^p\leqslant C_{N,p,\alpha}^pa^{p-\frac{N}{r}}\lVert V\rVert_{r}\lVert\nabla u\rVert_{p}^{\frac{N}{r}}=C_{N,p,\alpha}^pa^{p-\frac{N}{r}}\lVert V\rVert_{r}a_{n}^{\frac{N}{pr}},
    	    \end{equation}
    	and
    	    \begin{align}\label{dnleq}
    	    	\lvert d_{n}&\rvert\leqslant\lVert W\rVert_{s}\lVert u_{n}\rVert_{\beta}^{p-1}\lVert\nabla u_{n}\rVert_{p}\leqslant C_{N,p,\beta}^{p-1}a^{p-1-\frac{N}{s}}\lVert W\rVert_{s}\lVert\nabla u_{n}\rVert_{p}^{1+\frac{N}{s}}\nonumber\\
    	    	&=C_{N,p,\beta}^{p-1}a^{p-1-\frac{N}{s}}\lVert W\rVert_{s}a_{n}^{\frac{1}{p}\big(1+\frac{N}{s}\big)},
    	    \end{align}
        where
            \[r=\frac{\alpha}{\alpha-p}\quad\mbox{and}\quad s=\frac{p\beta}{(p-1)(\beta-p)}\quad\mbox{with}\ p\leqslant\alpha,\beta<p^*.\]
        Therefore,
            \[a_{n}\leqslant\frac{Np(q-p)}{B}m_{V,a}+\frac{N\lvert 2p-q\rvert}{B}C_{N,p,\alpha}^pa^{p-\frac{N}{r}}\lVert V\rVert_{r}a_{n}^{\frac{N}{pr}}+\frac{p^2}{B}C_{N,p,\beta}^{p-1}a^{p-1-\frac{N}{s}}\lVert W\rVert_{s}a_{n}^{\frac{1}{p}\big(1+\frac{N}{s}\big)}+o_{n}(1).\]
        We know
            \[\frac{N}{pr}<1\quad\mbox{and}\quad\frac{1}{p}\Big(1+\frac{N}{s}\Big)<1.\]
        If $a_{n}\geqslant 1$, by assumption (\ref{Vconmount2}), we can choose $L_{2}$ sufficiently small such that
            \[N\lvert 2p-q\rvert C_{N,p,\alpha}^pa^{p-\frac{N}{r}}\lVert V\rVert_{r}+p^2C_{N,p,\beta}^{p-1}a^{p-1-\frac{N}{s}}\lVert W\rVert_{s}<\frac{B}{2},\]
        which implies
            \[a_{n}\leqslant\frac{2Np(q-p)m_{V,a}}{B}+o_{n}(1).\]
        Therefore, $\{a_{n}\}$ is bounded and
            \begin{equation}\label{anleq}
            	a_{n}\leqslant\max\Big\{1,\frac{2Np(q-p)m_{V,a}}{B}+o_{n}(1)\Big\}.
            \end{equation}
    \end{proof}

    Now, we can deduce that $b_{n}$, $c_{n}$, $d_{n}$ and the Lagrange multipliers $\lambda_{n}$ are bounded as well. Then, we can assume that $a_{n}$, $b_{n}$, $c_{n}$, $d_{n}$ and $\lambda_{n}$ converge to $a_{0}$, $b_{0}$, $c_{0}$, $d_{0}$ and $\lambda$ respectively.

    \begin{lemma}\label{lamgreat0}
    	Under the assumption {\rm(\ref{Vconmount2})}, $\lambda>0$.
    \end{lemma}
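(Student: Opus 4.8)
The plan is to run the same computation that gives positivity of the Lagrange multiplier in Lemma~\ref{PS}, but with the $L^{N/p}$--estimates there replaced by the $L^r$/$L^s$--estimates \eqref{cnleq}--\eqref{dnleq}. We already know that $\{u_n\}$ is a bounded PS sequence with $a_n\to a_0$, $b_n\to b_0$, $c_n\to c_0$, $d_n\to d_0$ and $\lambda_n\to\lambda$, where $\lambda_n=-a^{-p}E'(u_n)u_n$, so that $\lambda a^p=-a_0+b_0+c_0$. First I would eliminate $a_0$ and $b_0$ between the energy identity $\frac1p a_n-\frac1q b_n-\frac1p c_n=m_{V,a}+o_n(1)$ and the Pohozaev identity \eqref{Phozaev}, i.e.\ $a_n-\frac{N(q-p)}{pq}b_n-\frac Np c_n-d_n=o_n(1)$. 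Exactly as in Lemma~\ref{PS} this produces the closed form
\[
\lambda a^p=\frac1B\Big[pA\,m_{V,a}-(N-p)(q-p)c_0-p(q-p)d_0\Big],\qquad A=Np-(N-p)q,\ B=N(q-p)-p^2,
\]
with $A>0$ (since $q<p^*$) and $B>0$ (since $q>p+\frac{p^2}{N}$). Hence it suffices to bound the last two terms by $\tfrac{pA}{2B}m_{V,a}$.

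The next step is to upgrade the a~priori bound on $a_0$. From $a_0\le\frac{Np(q-p)}{B}m_{V,a}+\frac{N|q-2p|}{B}|c_0|+\frac{p^2}{B}|d_0|$ and \eqref{cnleq}--\eqref{dnleq} one gets
\[
a_0\le\frac{Np(q-p)}{B}m_{V,a}+C\,a^{p-\frac Nr}\|V\|_r\,a_0^{\frac{N}{pr}}+C\,a^{p-1-\frac Ns}\|W\|_s\,a_0^{\frac1p\left(1+\frac Ns\right)}.
\]
Since $\frac{N}{pr}<1$ and $\frac1p(1+\frac Ns)<1$, I would apply Young's inequality to absorb the factors $a_0^{\bullet}$ into $\tfrac12 a_0$; because $\bigl(p-\frac Nr\bigr)\frac{pr}{pr-N}=p$ and $\bigl(p-1-\frac Ns\bigr)\frac{ps}{(p-1)s-N}=p$, the remainders are $C\,a^p\|V\|_r^{\frac{pr}{pr-N}}$ and $C\,a^p\|W\|_s^{\frac{ps}{(p-1)s-N}}$. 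Using \eqref{Vconmount1} (exponents $\sigma_1=(p-\tfrac Nr)\frac{p(q-p)}{B}$, $\bar\sigma_1=(p-1-\tfrac Ns)\frac{p(q-p)}{B}$) together with the identity $1+\theta=\frac{p(q-p)}{B}$, $\theta$ as in Lemma~\ref{made}, these remainders are $\le C\,a^{-p\theta}$, hence $\le C\,m_{V,a}$ because $m_{V,a}\ge\kappa m_a=\kappa C_4 a^{-p\theta}$ by Lemma~\ref{mVaboundary}. This yields a clean bound $a_0\le C_{10}m_{V,a}$.

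Finally I would substitute $a_0\le C_{10}m_{V,a}$ back into \eqref{cnleq}--\eqref{dnleq} and use $m_{V,a}\ge\kappa C_4 a^{-p\theta}$ once more; the same exponent arithmetic turns $a^{p-\frac Nr}\|V\|_r m_{V,a}^{N/(pr)}$ into $C\,a^{\sigma_1}\|V\|_r m_{V,a}$ and $a^{p-1-\frac Ns}\|W\|_s m_{V,a}^{\frac1p(1+N/s)}$ into $C\,a^{\bar\sigma_1}\|W\|_s m_{V,a}$, i.e.\ $|c_0|\le C a^{\sigma_1}\|V\|_r m_{V,a}$ and $|d_0|\le C a^{\bar\sigma_1}\|W\|_s m_{V,a}$. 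Then, by \eqref{Vconmount1} (and \eqref{Vconmount2}) with $L_1$ (resp.\ $L_2$) small enough,
\[
\lambda a^p\ge m_{V,a}\Big(\tfrac{pA}{B}-CL_1\Big)\ge\tfrac{pA}{2B}m_{V,a}>0,
\]
since $m_{V,a}>0$, and therefore $\lambda>0$.

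The main obstacle is the bookkeeping in the middle step: to remove the spurious ``$\max\{1,\cdot\}$'' of \eqref{anleq} and obtain $a_0\lesssim m_{V,a}$ one must run the Young-type absorption carefully and then check that every power of $a$ generated — both in the bound for $a_0$ and in the final estimates of $|c_0|,|d_0|$ — matches exactly the exponents $\sigma_1,\bar\sigma_1$ in the hypotheses. The algebraic identities $A+B=p(q-p)$ (equivalently $1+\theta=\frac{p(q-p)}{B}$) and $\bigl(p-\frac Nr\bigr)\frac{pr}{pr-N}=p$ are precisely what make these exponents line up; once $a_0\lesssim m_{V,a}$ is secured, the rest is routine.
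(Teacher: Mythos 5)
Your proposal is correct, and its skeleton is the same as the paper's: both start from $\lambda a^p=-a_0+b_0+c_0=\frac1B\bigl[pA\,m_{V,a}-(N-p)(q-p)c_0-p(q-p)d_0\bigr]$ (obtained by eliminating $a_0,b_0$ between the energy and Pohozaev identities) and both control $c_0,d_0$ through the estimates (\ref{cnleq})--(\ref{dnleq}). Where you diverge is the endgame. The paper keeps the crude bound (\ref{anleq}) and splits into the cases $a_0\geqslant 1$ and $a_0<1$, invoking (\ref{Vconmount2}) in the first case and (\ref{Vconmount2}) together with Lemma \ref{mVaboundary} in the second. You instead run a Young-type absorption using the exact exponent identities $(p-\frac Nr)\frac{pr}{pr-N}=p$, $(p-1-\frac Ns)\frac{ps}{(p-1)s-N}=p$ and $1+\theta=\frac{p(q-p)}{B}$ to get the uniform bound $a_0\leqslant C\,m_{V,a}$, and then feed this back to obtain $\lvert c_0\rvert\leqslant Ca^{\sigma_1}\lVert V\rVert_r m_{V,a}$ and $\lvert d_0\rvert\leqslant Ca^{\bar\sigma_1}\lVert W\rVert_s m_{V,a}$, after which (\ref{Vconmount1}) finishes the argument. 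Your exponent arithmetic checks out, and your version is in fact tighter: in the paper's case $a_0<1$ the conclusion $\lambda a^p\geqslant C(m_a-\varepsilon)>0$ compares a fixed small constant $\varepsilon\sim L_2$ with $m_a=C_4a^{-p\theta}$, which degenerates for large $a$ unless one also brings in the $a$-weighted condition (\ref{Vconmount1}) as you do; your uniform treatment makes explicit that both (\ref{Vconmount1}) and (\ref{Vconmount2}) (the standing hypotheses of this subsection) are being used, and avoids the case distinction and the spurious $\max\{1,\cdot\}$ altogether.
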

    \begin{proof}
    	By (\ref{cnleq}) and (\ref{dnleq}), we have
    	    \begin{align*}
    	    	\lambda a^p&=-\lim_{n\rightarrow\infty}F'(u_{n})u_{n}=-a_{0}+b_{0}+c_{0}\\
    	    	&=\frac{1}{B}\Big\{p[Np-(N-p)q]m_{V,a}-(N-p)(q-p)c_{0}-p(q-p)d_{0}\Big\}\\
    	    	&\geqslant\frac{1}{B}\Big\{p[Np-(N-p)q]m_{V,a}-(N-p)(q-p)C_{N,p,\alpha}^pa^{p-\frac{N}{r}}\lVert V\rVert_{r}a_{0}^{\frac{N}{pr}}\\
    	    	&\qquad\qquad-p(q-p)C_{N,p,\beta}^{p-1}a^{p-1-\frac{N}{s}}\lVert W\rVert_{s}a_{0}^{\frac{1}{p}\big(1+\frac{N}{s}\big)}\Big\}.
    	    \end{align*}
        If $a_{0}\geqslant 1$. Since
            \[\frac{N}{pr}<1\quad\mbox{and}\quad\frac{1}{p}\Big(1+\frac{N}{s}\Big)<1,\]
        then, by (\ref{Vconmount2}) and (\ref{anleq}), for sufficiently small $L_{2}$, there is
            \[\lambda a^p\geqslant C(m_{V,a}-\varepsilon a_{0})\geqslant Cm_{V,a}>0.\]
        If $a_{0}<1$, then, by (\ref{Vconmount2}) and Lemma \ref{mVaboundary}, for sufficiently small $L_{2}$, there is
            \[\lambda a^p\geqslant C(m_{a}-\varepsilon)>0.\]
    \end{proof}

    \noindent\textbf{Proof of the existence of mountain pass type solution.} Firstly, we can prove $\{u_{n}\}$ is a PS sequence for $E_{\lambda}$ at level $m_{V,a}+\frac{\lambda}{p}a^p$. By (\ref{anleq}), we can assume that $u_{n}\rightharpoonup u$ in $W^{1,p}(\mathbb{R}^N)$. If $\{u_{n}\}$ does not strongly convergence to $u$ in $W^{1,p}(\mathbb{R}^N)$, then, by Lemma
    \ref{splittig}, there exists $k\in\mathbb{N}_{+}$ and $y_{n}^j\in\mathbb{R}^N(1\leqslant j\leqslant k)$ such that
        \[u_{n}=u+\sum_{j=1}^{k}w^j(\cdot-y_{n}^j)+o_{n}(1)\quad\mbox{in}\ W^{1,p}(\mathbb{R}^N),\]
    where $w^j$ satisfies
        \[-\Delta_{p}w+\lambda\lvert w\rvert^{p-2}w=\lvert w\rvert^{q-2}w.\]
    Similar to the proof of Theorem \ref{th1}, we have
        \begin{equation}\label{mVaFu}
    	    m_{V,a}=E(u)+\sum_{j=1}^{k}E_{\infty}(w^j).
        \end{equation}
    Denote $\lVert u\rVert_{p}=\mu$ and $\lVert w^j\rVert_{p}=\beta_{j}$. Then
        \[a^p=\mu^p+\sum_{j=1}^{k}\beta_{j}^p.\]
    By Lemma \ref{made} and the definition of $m_{a}$,
        \[\sum_{j=1}^{k}E_{\infty}(w^j)\geqslant\sum_{j=1}^{k}m_{\lVert w^j\rVert_{p}}\geqslant m_{\beta_{1}}=m_{a}\Big(\frac{\beta_{1}}{a}\Big)^{-p\theta},\]
    where
        \[\theta=\frac{Np-q(N-p)}{N(q-p)-p^2}.\]
    We claim that, under the assumption (\ref{Vconmount1}), we have
        \begin{equation}\label{Fugeqma}
        	E(u)\geqslant-\theta m_{a}\Big(\frac{\mu}{a}\Big)^p.
        \end{equation}
    Therefore, using the fact that $\mu^p+\beta_{1}^p\leqslant a^p$, there is
        \begin{align*}
        	E(u)+\sum_{j=1}^{k}E_{\infty}(w^j)&\geqslant-\theta m_{a}\Big(\frac{\mu}{a}\Big)^p+m_{a}\Big(\frac{\beta_{1}}{a}\Big)^{-p\theta}\geqslant m_{a}\Big[-\theta+\theta\Big(\frac{\beta_{1}}{a}\Big)^p+\Big(\frac{\beta_{1}}{a}\Big)^{-p\theta}\Big]\\
        	&\geqslant m_{a}\min_{0<x\leqslant 1}\{-\theta+\theta x+x^{-\theta}\}=m_{a},
        \end{align*}
    a contradiction with $m_{V,a}<m_{a}$.

    Now, by strong convergence, we know $u\in S_{a}$ satisfies (\ref{equation})$\hfill\Box$

    Finally, we prove (\ref{Fugeqma}).
    \begin{lemma}
    	Under the assumption {\rm(\ref{Vconmount1})}, {\rm(\ref{Fugeqma})} is true.
    \end{lemma}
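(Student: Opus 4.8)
The plan is to write $E(u)$ at the weak solution $u$ in closed form, control the two potential integrals by Gagliardo--Nirenberg, and then absorb them into the leading positive term by Young's inequality; the smallness hypothesis (\ref{Vconmount1}) is exactly what makes the absorption possible, and the powers appearing there are precisely the ones that make this work.

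First I would dispose of the trivial case: if $u\equiv0$ then $\mu:=\lVert u\rVert_p=0$ and both sides of (\ref{Fugeqma}) vanish, so we may assume $u\not\equiv0$, hence $t:=\lVert\nabla u\rVert_p>0$ and $0<\mu\le a$. Since by Remark \ref{usolution} $u$ is a weak solution of (\ref{equation}) with Lagrange multiplier $\lambda>0$, combining the Nehari identity $E'(u)u+\lambda\mu^p=0$ with the Pohozaev identity exactly as in the proof of Lemma \ref{Fgeq0} gives
\begin{equation*}
E(u)=\frac{B}{pN(q-p)}\,t^p+\frac{2p-q}{p(q-p)}\int_{\mathbb{R}^N}V(x)\lvert u\rvert^p\,dx+\frac{p}{N(q-p)}\int_{\mathbb{R}^N}V(x)\lvert u\rvert^{p-2}u\,\nabla u\cdot x\,dx,
\end{equation*}
with $B=N(q-p)-p^2>0$. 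Estimating the last two integrals by the H\"older and Gagliardo--Nirenberg inequalities (using $\lvert x\cdot\nabla u\rvert\le\lvert x\rvert\lvert\nabla u\rvert$ and $V(x)\lvert x\rvert=W(x)$, with the exponents $\alpha=\frac{pr}{r-1}$ and $\beta=\frac{sp(p-1)}{s(p-1)-p}$ already used in Lemma \ref{unbounded}, the cases $r=+\infty$ or $s=+\infty$ being handled by the corresponding $L^\infty$ bounds) leads to
\begin{equation*}
E(u)\ge\frac{B}{pN(q-p)}\,t^p-C_1\lVert V\rVert_r\,\mu^{p-\frac{N}{r}}\,t^{\frac{N}{r}}-C_2\lVert W\rVert_s\,\mu^{p-1-\frac{N}{s}}\,t^{1+\frac{N}{s}},
\end{equation*}
with $C_1,C_2>0$ depending only on $N,p,q,r,s$.

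Next I would rescale. By Lemma \ref{made}, $m_a=m_1a^{-p\theta}$ with $\theta=\frac{Np-q(N-p)}{B}>0$ and $m_1=m_1(N,p,q)>0$, and one has the algebraic identity $1+\theta=\frac{p(q-p)}{B}$; moreover $p-\frac{N}{r}>0$ and $p-1-\frac{N}{s}>0$ by $r>\frac{N}{p}$, $s>\frac{N}{p-1}$. Setting $t=\mu^{-\theta}\tau$ and $\sigma=\mu/a\in(0,1]$, a short computation shows that the powers of $a$ that factor out are precisely those occurring in (\ref{Vconmount1}), so
\begin{align*}
E(u)\ge\mu^{-p\theta}\Big[\frac{B}{pN(q-p)}\tau^p&-C_1\big(a^{(1+\theta)(p-\frac{N}{r})}\lVert V\rVert_r\big)\sigma^{(1+\theta)(p-\frac{N}{r})}\tau^{\frac{N}{r}}\\
&-C_2\big(a^{(1+\theta)(p-1-\frac{N}{s})}\lVert W\rVert_s\big)\sigma^{(1+\theta)(p-1-\frac{N}{s})}\tau^{1+\frac{N}{s}}\Big],
\end{align*}
while the right-hand side of (\ref{Fugeqma}) equals $-\mu^{-p\theta}\,\theta m_1\,\sigma^{p(1+\theta)}$. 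Hence, after invoking (\ref{Vconmount1}), (\ref{Fugeqma}) reduces to the pointwise inequality
\begin{equation*}
\frac{B}{pN(q-p)}\tau^p+\theta m_1\sigma^{p(1+\theta)}\ge C_1L_1\sigma^{(1+\theta)(p-\frac{N}{r})}\tau^{\frac{N}{r}}+C_2L_1\sigma^{(1+\theta)(p-1-\frac{N}{s})}\tau^{1+\frac{N}{s}}\qquad\text{for all }\tau>0.
\end{equation*}

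The last inequality is where the hard part lies. I would apply Young's inequality to each term on the right with the exponent pairing that turns $\tau^{N/r}$ and $\tau^{1+N/s}$ into $\tau^p$ — legitimate because $r>N/p$ and $s>N/(p-1)$ — and the key point, which is exactly why the exponents in (\ref{Vconmount1}) are chosen as they are, is that $\frac{pr}{pr-N}(1+\theta)(p-\tfrac{N}{r})=\frac{ps}{s(p-1)-N}(1+\theta)(p-1-\tfrac{N}{s})=p(1+\theta)$, so every error term carries exactly the factor $\sigma^{p(1+\theta)}$. Retaining, say, half of $\frac{B}{pN(q-p)}\tau^p$, the inequality then follows from $C\big[(C_1L_1)^{\frac{pr}{pr-N}}+(C_2L_1)^{\frac{ps}{s(p-1)-N}}\big]\le\theta m_1$ for some $C=C(N,p,q,r,s)$, which holds once $L_1=L_1(N,p,q,r,s)>0$ is chosen small enough. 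I expect the only delicate points to be the bookkeeping — checking $1+\theta=p(q-p)/B$, that the rescaling reproduces the exponents of (\ref{Vconmount1}), and that the Young exponents collapse to $p(1+\theta)$ — the rest being routine.
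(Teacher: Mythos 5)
Your proposal is correct and follows essentially the same route as the paper: rewrite $E(u)$ via the Pohozaev identity to eliminate $\lVert u\rVert_q^q$, bound the two potential terms by H\"older/Gagliardo--Nirenberg with $\mu=\lVert u\rVert_p$ in place of $a$, absorb them into the $\lVert\nabla u\rVert_p^p$ term by Young's inequality (this is exactly the paper's ``it is not difficult to prove'' step yielding $-C\mu^p\lVert V\rVert_r^{pr/(pr-N)}$ and $-C\mu^p\lVert W\rVert_s^{ps/((p-1)s-N)}$), and conclude via $m_a=C a^{-p\theta}$ and the smallness of $L_1$ in (\ref{Vconmount1}). Your rescaling $t=\mu^{-\theta}\tau$, $\sigma=\mu/a$ and the exponent identities $1+\theta=p(q-p)/B$, $\tfrac{pr}{pr-N}(1+\theta)(p-\tfrac{N}{r})=p(1+\theta)$ just make explicit the bookkeeping the paper leaves implicit; the minor invocation of the Nehari identity is unnecessary, since Pohozaev alone gives the displayed formula for $E(u)$.
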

    \begin{proof}
    	Set
    	    \[a_{0}=\lVert\nabla u\rVert_{p}^p,\quad b_{0}=\lVert u\rVert_{q}^q,\quad c_{0}=\int_{\mathbb{R}^N}V(x)\lvert u\rvert^pdx,\quad\mbox{and}\quad d_{0}=\int_{\mathbb{R}^N}V(x)\lvert u\rvert^{p-2}u\nabla u\cdot xdx.\]
    	Then, by Pohozaev identity
    	     \[a_{0}-\frac{N(q-p)}{pq}b_{0}-\frac{N}{p}c_{0}-d_{0}=0,\]
    	(\ref{cnleq}) and (\ref{dnleq}), we have
    	    \begin{align*}
    	    	E(u)&=\frac{1}{p}a_{0}-\frac{1}{q}b_{0}-\frac{1}{p}c_{0}=\frac{N(q-p)-p^2}{Np(q-p)}a_{0}+\frac{2p-q}{p(q-p)}c_{0}+\frac{p}{N(q-p)}d_{0}\\
    	    	&\geqslant C\Big(2a_{0}-C_{1}\mu^{p-\frac{N}{r}}\lVert V\rVert_{r}a_{0}^{\frac{N}{pr}}-C_{1}\mu^{p-1-\frac{N}{s}}\lVert W\rVert_{s}a_{0}^{\frac{1}{p}\big(1+\frac{N}{s}\big)}\Big),
    	    \end{align*}
        where $C, C_{1}$ are positive constants depending on $N$, $p$, $q$, $r$ and $s$. It is not difficult to prove that
            \[a_{0}-C_{1}\mu^{p-\frac{N}{r}}\lVert V\rVert_{r}a_{0}^{\frac{N}{pr}}\geqslant-C_{2}\mu^p\lVert V\rVert_{r}^{\frac{pr}{pr-N}},\]
        and
            \[a_{0}-C_{1}\mu^{p-1-\frac{N}{s}}\lVert W\rVert_{s}a_{0}^{\frac{1}{p}\big(1+\frac{N}{s}\big)}\geqslant-C_{3}\mu^p\lVert W\rVert_{s}^{\frac{ps}{(p-1)s-N}}.\]
        Therefore, under assumption (\ref{Vconmount1}), by Lemma \ref{made}, we have
            \[E(u)\geqslant-\theta m_{a}\Big(\frac{\mu}{a}\Big)^p,\]
        by taking $L_{1}$ sufficiently small.
    \end{proof}

\end{document}